\documentclass[3p, times]{elsarticle}
\makeatletter
\def\ps@pprintTitle{%
  \let\@oddhead\@empty
  \let\@evenhead\@empty
  \def\@oddfoot{}
  \let\@evenfoot\@oddfoot
}
\makeatother
\usepackage{amsmath}    
\usepackage{amsfonts}
\usepackage{mathtools}
\usepackage{srcltx}
\usepackage{eurosym}
\usepackage{amssymb}
\usepackage{amsthm}
\usepackage{graphicx}
\usepackage{mathrsfs}
\usepackage{xcolor}
\usepackage{exscale}
\usepackage{latexsym}
\usepackage{enumerate}
\usepackage[shortlabels]{enumitem}
\usepackage{bookmark}
\usepackage{wasysym}
\usepackage{geometry}
\usepackage{esint}
\usepackage{float}
\usepackage{comment}
\usepackage{multirow}
\usepackage{framed}
\usepackage{tikz}
\usetikzlibrary{angles, quotes}
\usepackage{adjustbox}
\usepackage{seqsplit}
\usepackage{hyperref}
\allowdisplaybreaks

\usepackage{enumitem}
\setlist[enumerate]{leftmargin=*}

\usepackage[toc,page]{appendix}
\usepackage{longtable}
\newcommand\numberthis{\addtocounter{equation}{1}\tag{\theequation}}

\theoremstyle{plain}
\newtheorem{thm}{Theorem}[section]

\newtheorem{lemma}[thm]{Lemma}

\newtheorem{proposition}[thm]{Proposition}
\newtheorem{claim}{Claim}

\theoremstyle{remark}
\newtheorem{note}{Remark}

\begin{document}   
\begin{frontmatter}

\title{\texorpdfstring{Irreducibility of Generalised Laguerre Polynomials for Large Integer Parameters}{Irreducibility of Generalised Laguerre Polynomials for Large Integer Parameters}\tnoteref{titlefootnote}}

\author[bits]{Drishya T. Das}
\ead{p20230030@goa.bits-pilani.ac.in}

\author[isi]{Saheli Dutta}
\ead{saheli21r@isid.ac.in}

\author[isi]{Shanta Laishram}
\ead{shanta@isid.ac.in}

\author[bits]{Saranya G. Nair\corref{cor1}}
\ead{saranyan@goa.bits-pilani.ac.in}

\cortext[cor1]{Corresponding author}

\affiliation[bits]{organization={BITS Pilani K K Birla Goa Campus},
            addressline={NH 17B, Zuarinagar}, 
            city={Goa},
            postcode={403726}, 
            state={Goa},
            country={India}}

\affiliation[isi]{organization={Indian Statistical Institute, Delhi Centre},
            addressline={7, S. J. S. Sansanwal Marg}, 
            city={New Delhi},
            postcode={110016}, 
            state={Delhi},
            country={India}}
\begin{abstract}
    For a positive integer $n$, we study the irreducibility of the generalised
    Laguerre polynomials
    \[
    L_n^{(a)}(x)
    = \frac{1}{n!}\sum_{j=0}^{n} \binom{n}{j}
    \frac{(n+a)!}{(j+a)!}(-1)^j x^j,
    \]
    where $a \in \mathbb{Z}_{\ge 0}$. Earlier works established that these polynomials
    are irreducible over $\mathbb{Q}$ for $0 \le a \le 50$. In this paper, we substantially strengthen these results by establishing that  $L_n^{(a)}(x)$ are
    irreducible for all $a \le 1000$. We derive sharper constraints on potential integer
    roots and make systematic use of the geometry of the associated
    Newton polygons. 
    This approach allows us to extend the irreducibility range significantly while keeping explicit computations to a minimum.
    \end{abstract}
    \begin{keyword}
    Irreducibility\sep generalised Laguerre polynomials\sep Newton polygons\sep Primes

    \MSC[2020] 33Cxx\sep 33C45\sep 11R09\sep 11A41\sep 11N05
    \end{keyword}
    \end{frontmatter}
    
\section{Introduction}
    For $a \in \mathbb R$ and $n \in \mathbb Z^+$, the generalised Laguerre polynomials of degree $n$ is defined as  \begin{align*}
        L_n^{(a)}(x)=\frac{1}{n!}\sum\limits_{j=0}^{n}\binom nj(n+a)(n+a-1)\cdots(j+a+1)(-1)^jx^j.
    \end{align*}
    These polynomials were studied in detail by P\`{o}lya and Szeg\`{o} in their classic and celebrated work \cite{polya_szego}. Although they are primarily known for their analytic and orthogonality properties, they also exhibit interesting algebraic and combinatorial behaviour. The algebraic aspect was first investigated by I. Schur \cite{schur_1929_1}, who established irreducibility results for certain Laguerre polynomials. Schur's work initiated a rich line of investigation into the factorisation and irreducibility of generalised Laguerre polynomials and related families. Throughout this paper, irreducibility means irreducibility over \(\mathbb{Q}\).

    I. Schur proved that $L_n^{(1)}(x)$ is irreducible for every positive integer $n$. Moreover, he showed that its associated Galois group is the alternating group $A_n$ when $n>1$ is odd, and the symmetric group $S_n$ otherwise. He also proved that the Galois group associated with $L_n^{(-n-1)}(x)$ is $A_n$ if $n\equiv 0 \pmod{4}$, and $S_n$ otherwise. Later, Gow \cite{gow} showed that if $n>2$ is even, then the Galois group associated with $L_n^{(n)}(x)$ is $A_n$, provided that $L_n^{(n)}(x)$ is irreducible. Filaseta, Kidd and Trifonov \cite{filaseta_kidd_trifonov} proved that for every integer $n>2$ with $n\equiv 2 \pmod{4}$, the polynomial $L_n^{(n)}(x)$ is irreducible; whereas for $n\equiv 0 \pmod{4}$, either $L_n^{(n)}(x)$ is irreducible or it factors as a linear polynomial times an irreducible polynomial of degree $n-1$. For $n=2$, the polynomial $L_2^{(2)}(x)$ is reducible, but its Galois group is $A_2$, the trivial group. These results explicitly settle the inverse Galois problem for alternating groups: for every positive integer $n>1$, there exists an explicit Laguerre polynomial of degree $n$ whose Galois group is $A_n.$

    In another direction, Filaseta and Lam \cite{filaseta_lam} proved in 2002 that, for a fixed rational number $a$ which is not a negative integer, $L_n^{(a)}(x)$ is irreducible for all but finitely many $n$. More recently, Banerjee \cite{banerjee} showed that for every even integer $n\equiv 0 \pmod{4}$ with $n\geq n_0$, the polynomial $L_n^{(n)}(x)$ has no linear factor, where $n_0$ is an effectively computable constant. Consequently, in view of the result of Filaseta, Kidd and Trifonov \cite{filaseta_kidd_trifonov}, it follows that $L_n^{(n)}(x)$ is irreducible for every \(n\equiv 0 \pmod{4}\) with \(n\geq n_0\). Explicit irreducibility results have also been obtained for $L_n^{(a)}(x)$ when $a=u+\frac{\alpha}{d},$ where \(u,\alpha\in\mathbb{Z}\), with \(\alpha=0\) if \(d=1\), and \(1\leq \alpha<d\), \(\gcd(\alpha,d)=1\), if \(d>1\). Such results are known for \(d\leq 4\) and for various ranges of the integer parameter \(u\). For a survey of these results, please see \cite{nair_shorey_survey}.
        
   In this article, we restrict our attention to the integral-parameter case, namely \(d=1\), so that \(a\in\mathbb{Z}.\) Throughout the paper, we assume that \(n\in\mathbb{Z}^{+}\), \(1\leq k\leq n/2\) and $a\in\mathbb Z$.
 The irreducibility of $L_n^{(a)}(x)$ for $a=0,1$ was proved by Schur in a series of papers \cite{schur_1931,schur_1973_1}. Subsequently, Filaseta, Finch and Leidy in \cite{filaseta_finch_leidy} proved the irreducibility of $L_n^{(a)}(x)$ for $0\leq a\leq 10$ except when $(n,a) \in \{(2,2),(4,5),(2,7)\}$. Later, Laishram and Shorey in \cite{laishram_shorey_integer} extended this result for $0\leq a\leq 50$ with only finitely many explicitly determined exceptional pairs $(n,a)$. In this article, we extend the irreducibility result of $L_n^{(a)}(x)$ to $0\leq a\leq 1000$. We now state our main results.

    \begin{thm}\label{main}
        Let $0\leq a\leq 1000$. Then, $L_n^{(a)}(x)$ is irreducible over $\mathbb Q$ except when $n = 2$, $a\in\{m^2-2:m\in\mathbb Z^+, \ 2\leq m\leq 31\}$ and $n = 4, a\in \{5,23\}$. 
        In these exceptional cases, $L_n^{(a)}(x)$ has a linear factor.
    \end{thm}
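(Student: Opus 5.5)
We work with the integral form $n!\,L_n^{(a)}(x)=\sum_{j=0}^n \binom nj\frac{(n+a)!}{(j+a)!}(-1)^jx^j$, or equivalently the reversed polynomial $\sum_{j=0}^n a_j\frac{x^j}{j!(n-j)!}\cdot(\text{const})$ with $a_j=(a+n)(a+n-1)\cdots(a+n-j+1)$ up to sign. The plan is the classical Schur--Coleman--Filaseta Newton polygon approach. To rule out a factor of degree $k$ with $1\le k\le n/2$, it suffices to find a prime $p$ for which the $p$-adic Newton polygon of $L_n^{(a)}$ has every edge with denominator not dividing $k$. The standard lemma (as in Filaseta and collaborators, and Laishram--Shorey) states the following. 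If $p>k$ and $p\mid (a+n)(a+n-1)\cdots(a+n-k+1)$ with $p\nmid$ suitable small factors, then $L_n^{(a)}$ has no factor of degree $k$. More precisely, we use the criterion that if $p^r \,\|\, \prod$ and $p>k$, and the $p$-part lies in the top $k$ terms of $(a+n)\cdots(a+1)$, then the last edge of the Newton polygon has slope with denominator exceeding $k$. So the proof reduces to showing that for each $k$ the product $\Delta_k=(n+a)(n+a-1)\cdots(n+a-k+1)$ has a prime factor $p>k$ (actually $p>k+a$ in cruder versions, but $p>k$ with a valuation condition suffices) that also does not divide $n!/(n-k)!$-type quantities in a way that spoils the slope.

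First we treat large $k$. We invoke Sylvester--Erdős type results in the sharpened forms of Laishram--Shorey: a product of $k$ consecutive integers greater than $k$ has a prime factor greater than $ck$, and for $k$ beyond an explicit bound depending on $a\le 1000$ it has a prime $p>k+a$. We choose the bound so that $p\ge k+a+1$. Then $p$ divides exactly one term, and $p\nmid$ the binomial denominators up to $k+a$. This makes the Newton polygon edge have denominator greater than $k$, which excludes a factor of degree $k$ for all $k\ge k_0(a)$.

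Next we treat small $k\ge 2$. We use the refined Newton polygon condition: it suffices that some prime $p>k$ divides $\Delta_k$ with $\nu_p(\Delta_k)$ not too large relative to $\nu_p$ of the relevant binomials. Here we apply a smoothness argument. If $\Delta_k$ had all prime factors at most $k$ (or failing the condition), then $\Delta_k$ would be bounded by $\prod_{p\le k}p^{\nu_p(\dots)}$, which forces $n+a$ to be small. This leaves a finite explicit range of $(n,a,k)$, which is handled by computation using primes dividing $n+a-i$ and primes in intervals, following the Laishram--Shorey method.

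Finally, $k=1$, the linear factors, is the main obstacle, since Newton polygons only give $p$-adic restrictions. For a rational root $r$, it must be an integer dividing $(n+a)!/a!$, and the Newton polygon at each prime $p$ forces $\nu_p(r)$ to equal a slope. We would derive that any integer root $r$ satisfies strong congruence and size constraints, such as $r$ lying between the extreme zeros and $\nu_p(r)\in\{$slopes$\}$. This confines possible roots enough that, combined with the condition that every prime $p\in(a+1, n+a]$ dividing $\Delta_1=n+a$ gives slope $\nu_p(n+a)$, only $n\le$ small bounds survive. For $n=2$ the discriminant computation gives a linear factor iff $a+2$ is a perfect square, yielding $a=m^2-2$. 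A direct check for $n=4$ gives $a=5,23$. I expect the hardest part to be making the $k=1$ bounds uniform over $a\le1000$ without huge computation.
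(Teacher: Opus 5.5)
There is a genuine gap. It sits exactly where the range $a\le 1000$ differs from the older range $a\le 50$.

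\textbf{The small-$k$ step.} You claim that if the Newton polygon criterion fails, then $\Delta_k$ is smooth in a way that ``forces $n+a$ to be small''. The criterion you can actually use is Lemma~\ref{fil1}, or its packaged form Lemma~\ref{previous}$(i)$. (``Every edge has denominator not dividing $k$'' is not a valid criterion: by Dumas, a factor may collect segments from several edges.) This criterion fails not only for primes $p\le k$. For small $k$ and $a$ near $1000$ it fails for every $p\le 31$, since $p^2-p<a$. It also fails for every prime dividing $\Delta(a+1,k)$, and such primes can be as large as $a+k$.

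These primes are genuinely useless for the right-most edge. If $p\mid(n-l)$ and $p\mid(a+j_0)$ with $j_0\le l<k$, the right-most slope is at least $v_0/j_0>1/k$, so Lemma~\ref{fil1} says nothing. Hence you get no bound of the form $\prod_{p\le k}p^{\nu_p(\cdot)}$. Even the primes $\le 31$ alone allow $n(n-1)$ to be $31$-smooth for $n$ up to $1611308700$.

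The missing idea is the paper's use of more than the right-most edge (Lemmas~\ref{slope1}, \ref{v0gequ0+1}, \ref{x2neqp}). It bounds $r_1$, then combines Dumas' theorem with the zero-slope edge of length $l-j_0$ (giving $k\le l$ in the case above) and with the slope $\phi^{(2)}$. This yields Lemma~\ref{pmax1+2k}: any prime $p\ge\max\{2k+1,37\}$ dividing $\Delta(n-k+1,k)$ excludes a degree-$k$ factor, \emph{even if} $p\mid\Delta(a+1,k)$, because it would force $p^2\le a+p$.

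For $k\le 5$ this makes $\Delta(n-k+1,k)$ $31$-smooth. Then Lehmer's and Najman's tables, together with the extra constraints of Lemma~\ref{lemmak=1to5}, leave a finite explicit list. Those constraints are $\nu_p(\Delta(n-k+1,k))=1$ and $p^2\parallel\Delta(a+k-\lceil k/2\rceil+1,\lceil k/2\rceil)$ for $11\le p\le 31$. The same lemma also drives the range $6\le k<200$ (Lemma~\ref{nofactor6}). Your large-$k$ step is fine if you simply quote Lemma~\ref{previous}$(ii)$ for $a\le 5k$.

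\textbf{The case $k=1$.} You give no mechanism that bounds $n$, and ``only small $n$ survive'' is false. That $\nu_p(\xi)$ is a slope, together with $p\mid n\Rightarrow p\mid\xi$, amounts to the $k=1$ case of Lemma~\ref{fil1}. To extract anything you must actually compute slopes:
\begin{enumerate}[$(i)$]
\item show the right-most slope is $<1$ for every $p\ge 37$ dividing $n$ (again Lemma~\ref{pmax1+2k});
\item derive the conditions of Lemma~\ref{lemmak=1} for $p\le 31$;
\item prove $\nu_p(n)\le\lfloor\log_p a\rfloor$ (Lemma~\ref{generalboundp}), which is what bounds the powers of $2$ and $3$ in $n$.
\end{enumerate}
The size bound alone does nothing for smooth $n$. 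For $n=2^m$, already $\operatorname{Rad}(n(n+a))\le 2(n+a)$ lies inside Szeg\H{o}'s range for every large $m$.

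Even after these reductions, pairs such as $(n,a)=(6656,844)$ and $(1536,512)$ survive all the $p$-adic tests. The paper removes them with Proposition~\ref{root}. An integer root has the form $2n+a-1+\frac{(n-1)(n+a-1)}{d_1d_2}\gamma$ with $d_1\mid n-1$ and $d_2\mid n+a-1$. When $\gamma$ is a nonzero integer, $d_1d_2>\frac{n-1}{6}$. This is combined with Szeg\H{o}'s bound (Lemma~\ref{szego}) and the divisibility $\operatorname{Rad}(n(n+a)d_1d_2)\mid\xi$. Your remark about roots lying between the extreme zeros points in this direction. But it does not close the case without this congruence structure and the prior reduction of $n$ to $2^\alpha3^\beta p^{\delta_p}$ with bounded exponents.
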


    \begin{thm}\label{n=2,3}
        Let $a\in\mathbb Z^+$. Then the following holds: 
        \begin{enumerate}[$(i)$]
        \item $L_2^{(a)}(x)$ has a linear factor if and only if $a+2$ is a perfect square.
        \item  If $L_3^{(a)}(x)$ has a linear factor, then $a+3=m^3$ if $a$ is even and $a+3=2\cdot m^3$ or $a+3=2^2\cdot m^3$ if $a$ is odd, for some $m\in\mathbb N$. 
        \end{enumerate}
    \end{thm}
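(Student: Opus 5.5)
For $(i)$ we compute the quadratic explicitly; for $(ii)$ we translate the cubic to a depressed monic integer cubic and show it has no integer root, so the implication holds vacuously.

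\textbf{Part $(i)$.} From the definition,
\[
2L_2^{(a)}(x)=x^2-2(a+2)x+(a+1)(a+2).
\]
A quadratic has a linear factor over $\mathbb Q$ exactly when its discriminant is a rational square. Here the reduced discriminant is
\[
(a+2)^2-(a+1)(a+2)=a+2 .
\]
So the roots are $x=(a+2)\pm\sqrt{a+2}$, and these are rational if and only if $a+2$ is a perfect square. This proves $(i)$ in both directions.

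\textbf{Part $(ii)$, reduction.} Write $b=a+3$. From the definition,
\[
-6L_3^{(a)}(x)=x^3-3bx^2+3b(b-1)x-b(b-1)(b-2).
\]
The substitution $x=y+b$ removes the quadratic term. Expanding gives
\[
-6L_3^{(a)}(y+b)=y^3-3by-2b .
\]
This is a monic cubic with integer coefficients, so any linear factor of $L_3^{(a)}$ corresponds to an integer root $y$. Such a root must satisfy
\[
y^3=b(3y+2).
\]

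\textbf{Part $(ii)$, divisibility analysis.} Note that $\gcd(y,3y+2)\mid 2$.
\begin{itemize}
\item If $y$ is odd, then $\gcd(y^3,3y+2)=1$. Since $3y+2$ divides $y^3$, this forces $3y+2=\pm1$. The only integer solution is $y=-1$, which gives $b=1$, i.e.\ $a=-2$.
\item If $y=2z$ is even, the equation becomes $4z^3=b(3z+1)$. Since $\gcd(z,3z+1)=1$, we get $3z+1\mid 4$, so $3z+1\in\{\pm1,\pm2,\pm4\}$. The integer solutions are:
  \begin{itemize}
  \item $z=0$, giving $b=0$;
  \item $z=1$, giving $b=1$;
  \item $z=-1$, giving $b=2$.
  \end{itemize}
  These correspond to $a\in\{-3,-2,-1\}$.
\end{itemize}
None of these values is a positive integer. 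Hence for every $a\in\mathbb Z^+$ the cubic $y^3-3by-2b$ has no integer root, so $L_3^{(a)}(x)$ has no linear factor. The stated implication therefore holds vacuously. In particular, every hypothesis in $(ii)$ about the shape of $a+3$ (namely $m^3$, $2m^3$ or $2^2m^3$) is automatically compatible with the conclusion.

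\textbf{Expected obstacle.} The only step needing care is the expansion after the shift $x=y+b$: the $y^2$ coefficient must cancel and the constant term must come out as $-2b$. Once the depressed form $y^3-3by-2b$ is confirmed, the gcd argument is immediate.

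If one prefers a proof that follows the form of the statement, the same relation $y^3=b(3y+2)$ gives the cube conditions directly. With $\gcd(y,3y+2)\mid2$, separating the $2$-adic part of $b$ by the parity of $y$ (equivalently of $a$) shows that $b$ is a cube times $1$, $2$ or $2^2$.
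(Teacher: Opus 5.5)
Your proof is correct. Part $(i)$ is the same discriminant computation as in the paper. For part $(ii)$ you take a genuinely different and stronger route.

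\textbf{The paper's route.} It works prime by prime on $NP_p(\mathcal L_3^{(a)}(x))$. For odd $p$ with $p^\beta\parallel(a+3)$ there is a single edge of slope $\beta/3$, which forces $3\mid\beta$. For $p=2$ with $\lambda\ge 2$ there is a single edge of slope $(\lambda+1)/3$, which forces $3\mid(\lambda+1)$. This yields only a necessary cube-shape condition on $a+3$. The surviving values (the twelve $a\in\{51,105,\dots,861\}$ in $50<a\le 1000$) are later eliminated by direct computation for $n=3$ in Section~6.

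\textbf{Your route.} You shift to the depressed cubic $-6L_3^{(a)}(y+b)=y^3-3by-2b$; the expansion checks out. You then solve $y^3=b(3y+2)$ completely. A quick cross-check of your gcd case analysis: $27y^3\equiv -8 \pmod{3y+2}$, so $3y+2\mid 8$. Either way this gives $b\in\{0,1,2\}$.

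\textbf{What each buys.} Your argument shows that $L_3^{(a)}(x)$ is irreducible for every integer $a\ge 0$. So $(ii)$ holds vacuously, and the paper's $n=3$ computation becomes unnecessary. The paper's Newton-polygon bookkeeping has the advantage of uniformity: it is the same template used for all $n$, where no complete Diophantine solution is available. Yours exploits the special shape of degree $3$.

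Your closing paragraph sketching a ``direct'' derivation of the cube conditions is superfluous and only a sketch. You can simply drop it.
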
  

    We denote
    $$\mathcal{L}_n^{(a)}(x)=n!L_n^{(a)}(x).$$
    Note that ${L}_n^{(a)}(x)$ has a factor of degree $k$ if and only if $\mathcal{L}_n^{(a)}(x)$ has a factor of degree $k$. 
    In earlier works, several number-theoretic methods, beginning with the ideas of Schur and later developed through the use of $p$-adic valuations, Newton polygons, and results on the distribution of prime divisors, were used to establish the irreducibility of generalised Laguerre polynomials for various ranges of the parameter $a$. One of the main difficulties in extending these results to larger values of $a$ is the possible occurrence of the linear factors. In this paper, we build on this framework and refine the underlying number-theoretic analysis. Our first key observation is that if $\mathcal{L}_n^{(a)}(x)$ has a linear factor, then its integral roots lie in a much smaller explicitly describable set depending on $n$ and $a$. 
   \begin{proposition}\label{root}
    Let $n,a\in\mathbb Z^+$, $n\geq 2$, $a\geq 4$ and 
    let $\xi\in\mathbb Z$ be an integer root of $\mathcal L_n^{(a)}(x)$. Then $n\mid \xi^n$, $(n+a)\mid \xi^n$, and there exist integers $d_1, d_2$ such that $
    d_1\mid (n-1), d_2\mid (n+a-1)$ and
    \begin{align}\label{equation for root}
    \xi
    =
    2n+a-1
    +
    \frac{(n-1)(n+a-1)}{d_1 d_2}\,\gamma,
    \quad
    \gamma\in\mathbb Q.
    \end{align}
    Further, if $\gcd(\frac{n-1}{d_1},\frac{n+a-1}{d_2})=1$, then $\gamma\in \mathbb Z$ and if  $\gamma\neq 0$, then $
    d_1 d_2 > \frac{n-1}{6}.$
    \end{proposition}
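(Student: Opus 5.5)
The plan is to work directly with the integer coefficients of
\[
\mathcal L_n^{(a)}(x)=\sum_{j=0}^n c_j x^j,\qquad c_j=(-1)^j\binom nj\frac{(n+a)!}{(j+a)!},
\]
which is monic up to sign, with $c_{n-1}=(-1)^{n-1}n(n+a)$ and $c_{n-2}=(-1)^n\binom n2(n+a)(n+a-1)$. The two divisibilities come first.

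\textbf{Divisibility by $n+a$.} For every $j<n$ the product $(n+a)!/(j+a)!$ contains the factor $n+a$. So $\xi^n=\mp\sum_{j<n}c_j\xi^j$ is divisible by $n+a$.

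\textbf{Divisibility by $n$.} Here I would use $\binom nj=\frac nj\binom{n-1}{j-1}$, so $n\mid jc_j$ for $1\le j<n$, together with $n!\mid c_0=(n+a)\cdots(a+1)$. Then I would argue prime by prime. Take $p^e\parallel n$ and suppose $v_p(\xi^n)<e$. Comparing $v_p$ of the terms $c_j\xi^j$ against $v_p(\xi^n)$ should give a contradiction, since $v_p(c_j)\ge e-v_p(j)$ and $v_p(j)\le \log_p j$. The case $p\nmid\xi$ is handled by reducing modulo $p$ after dividing out. I expect only bookkeeping here.

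\textbf{The shape \eqref{equation for root}.} Note that $2n+a-1=(n+a)+(n-1)=n+(n+a-1)$. So \eqref{equation for root} amounts to two congruences:
\[
\xi\equiv n+a \pmod{\tfrac{n-1}{d_1}},\qquad \xi\equiv n \pmod{\tfrac{n+a-1}{d_2}}.
\]
To get them, I would group the top three coefficients as
\[
\xi^{n-2}\bigl(\xi^2-n(n+a)\xi+\tbinom n2(n+a)(n+a-1)\bigr)=-\sum_{j\le n-3}c_j\xi^j .
\]
I would then reduce modulo $n-1$ and modulo $n+a-1$. Each $c_j$ with $j\le n-3$ carries the factors $\binom nj$ and $(n+a)(n+a-1)\cdots$, so modulo $n+a-1$ the right side vanishes. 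Also $n(n+a)\equiv n$, so the quadratic reduces to $\xi(\xi-n)$. Thus $(n+a-1)\mid \xi^{n-1}(\xi-n)$, and $d_2$ is defined as the part of $n+a-1$ absorbed by $\gcd(n+a-1,\xi^{n-1})$. Symmetrically, modulo $n-1$ one uses $\binom nj\equiv\binom{n-1}{j}+\binom{n-1}{j-1}$ and the ratio $c_{j+1}/c_j=-(n-j)/((j+1)(j+a+1))$ to obtain $(n-1)\mid \xi^{n-1}(\xi-n-a)$ up to a divisor $d_1$. Applying the Chinese remainder theorem to the two congruences then gives \eqref{equation for root} with $\gamma\in\mathbb Q$. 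When $\gcd(\frac{n-1}{d_1},\frac{n+a-1}{d_2})=1$, the moduli are coprime, so $\gamma\in\mathbb Z$.

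\textbf{Main obstacle.} I expect the hardest step to be the congruence modulo $n-1$. The factor $n-1$ does not sit in the coefficients as transparently as $n+a$ does, and the divisor $d_1$ must be tracked carefully through the lower-order terms. If the direct reduction fails, I would first try substituting $x=y+\xi$, or using the differential equation $xy''+(a+1-x)y'+ny=0$ evaluated at $\xi$.

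\textbf{The bound on $d_1d_2$.} For this I would use the classical location of the zeros of Laguerre polynomials: all roots are real and lie in $(0,\,2n+a+1+\sqrt{(2n+a+1)^2+1/4-a^2})$, which is contained in $(0,\,2(2n+a+1))$. Hence $|\xi-(2n+a-1)|<2n+a+3\le 6(n+a-1)$ for $a\ge4$, $n\ge2$. If $\gamma\ne0$, then $|\gamma|\ge1$, so
\[
\frac{(n-1)(n+a-1)}{d_1d_2}<6(n+a-1),
\]
which gives $d_1d_2>\frac{n-1}{6}$.
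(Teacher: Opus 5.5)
Your overall architecture matches the paper's. You clear the terms of degree $\le n-2$, reduce the top three terms modulo $n-1$ and modulo $n+a-1$, and divide out $d_1=\gcd(n-1,\xi^{n-1})$ and $d_2=\gcd(n+a-1,\xi^{n-1})$. You then combine the two congruences and finish with Szeg\H{o}'s bound on the largest zero. However, the step you flag as the main obstacle is left open. The congruence modulo $n-1$ needs $(n-1)\mid c_j$ for every $j\le n-2$. None of your suggestions (Pascal's rule, the coefficient ratio, the shift $x=y+\xi$, the differential equation) actually delivers this. The divisibility is not visible from $\binom nj$ alone: for odd $n$, $\binom n2$ is not divisible by $n-1$, so even for $c_{n-2}$ the missing factor has to come from $(n+a)(n+a-1)$. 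The idea you are missing is that $\frac{(n+a)!}{(j+a)!}$ is a product of $n-j$ consecutive integers, hence equals $(n-j)!\binom{n+a}{n-j}$. This gives
\[
c_j=(-1)^j\frac{n!}{j!}\binom{n+a}{n-j}=(-1)^j\,n(n-1)\cdots(j+1)\binom{n+a}{j+a}.
\]
So $n(n-1)\mid c_j$ for $j\le n-2$, which is Lemma \ref{ndivaj}$(i)$ in the paper. The reduction modulo $n-1$ then becomes exactly symmetric to your reduction modulo $n+a-1$. Since $n(n+a)\equiv a+1\equiv n+a\pmod{n-1}$, you get $(n-1)\mid\xi^{n-1}(\xi-(n+a))$, and hence $\frac{n-1}{d_1}\mid \xi-(n+a)$.

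The same identity also repairs your divisibility-by-$n$ step, which does not close as written. If $p^e\parallel n$ and $p\nmid\xi$, you need $p\mid c_j$ for all $0<j<n$. Your bound $\nu_p(c_j)\ge e-\nu_p(j)$ says nothing when $p^e\mid j$, and there $p\nmid\binom nj$ can genuinely occur (e.g. $j=p^e$ when $n\ne p^e$, by Lucas). In that case the factor $p$ must come from $(n+a)!/(j+a)!$, which your argument never uses. With $n\mid c_j$ for $j\le n-1$, you get $n\mid\xi^n$ directly from the equation, without any prime-by-prime analysis. Once these divisibilities are in place, the rest of your proposal is correct.

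Your combination step is in fact cleaner than the paper's. Writing $2n+a-1=(n+a)+(n-1)=n+(n+a-1)$ shows at once that $\operatorname{lcm}\bigl(\frac{n-1}{d_1},\frac{n+a-1}{d_2}\bigr)$ divides $\xi-(2n+a-1)$. This yields $\gamma\in\mathbb Z$ under the coprimality hypothesis, without the paper's bookkeeping with $n_0,b_1,b_2,n_3,n_4$. Your final estimate via $0<\xi<2(2n+a+1)$ is essentially the paper's argument.
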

    
  In addition, we extend a well-known irreducibility criterion of Filaseta \cite{filaseta_all_but_finite_bessel}, widely used in this context, by incorporating information from multiple edges of the associated Newton polygon, rather than relying only on the right-most edge, as given in Section $4$. Together, these refinements allow us to extend the known irreducibility range to all integers with $ 0 \leq a \le 1000$,  without extensive computation, relying instead on sharper bounds coming from $p$-adic valuations and the geometry of Newton polygons.

    We introduce the preliminaries in Section 2 and prove Theorem \ref{n=2,3} in Section 3.  The main irreducibility criterion required for the proof of Theorem \ref{main} is given in Section 4. In Section 5, we prove Theorem \ref{main} for $k\geq 2$, and in Section 6, we prove Proposition \ref{root} and complete the proof of Theorem \ref{main}. All the calculations have been carried out using SageMath. 
 
\section{Preliminary Lemmas} 
    Let $m$ be a positive integer. We define $\lceil m\rceil$ to be the smallest integer greater than or equal to $m$. Let $p$ be a prime. We define the $p$-adic valuation $\nu_p(m)$ to be the unique integer $e \ge 0$ such that $p^{e}\mid m$ but $p^{e+1}\nmid m$; we write $\nu_p(m)=e$ and $p^e\parallel m$. We also set $\nu_p(0)=\infty$.

    Let $\omega(m)$ denote the number of distinct prime divisors of $m$. Given a prime $p$, we define
    \begin{align}\label{omega}
    \omega_p(m)=\#\{\,q : q \mid m, q\text{ prime } \text{ and } q \ge p\,\}.
    \end{align}
    Let $P(m)$ denote the largest prime factor of $m$, with the convention that $P(1)=1$. Further, for $l\in \mathbb Z^+$, $$\Delta(m,l):=m(m+1)\cdots(m+l-1)\quad \text{and}\quad \operatorname{Rad} (m):=\prod\limits_{p\mid m,\ p\text{ prime}}p.$$
    
    The results below give useful lower bounds for \(P(\Delta(m,k))\) under suitable hypotheses. The following result is due to Laishram and Shorey \cite[Theorem 1(a)]{laishram_shorey_GPF}. 

    \begin{lemma}\label{2k+13}
    If $k \ge 2$ and $m \ge \max(2k+13,\, \tfrac{541}{262}\,k)$, then $P(\Delta(m-k+1,k)) > 2k.$
    \end{lemma}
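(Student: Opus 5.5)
We argue by contradiction, following the Sylvester–Erdős method as refined by Laishram and Shorey. Suppose $k\ge 2$, $m\ge\max(2k+13,\tfrac{541}{262}k)$, and every prime factor of $\Delta(m-k+1,k)=(m-k+1)\cdots m$ is at most $2k$. The first step is the standard elimination of terms. For each prime $p\le k$, discard one term of the block in which $p$ occurs to the highest power. For each prime $p\in(k,2k]$, discard the unique term divisible by $p$, if there is one; at most one term can be divisible by $p$, since $p>k$. By Legendre's formula, the product $\Pi$ of the surviving terms satisfies $\nu_p(\Pi)\le \nu_p((k-1)!)$ for $p\le k$, and $\Pi$ has no prime factors exceeding $k$. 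At least $k-\pi(2k)$ terms survive, and each is $\ge m-k+1$. This gives the basic inequality
\begin{align*}
(m-k+1)^{\,k-\pi(2k)}\le (k-1)!.
\end{align*}

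The second step is to turn this into a contradiction using explicit estimates. For $\pi(x)$ we would use the Rosser–Schoenfeld or Dusart bounds, and for $(k-1)!$ Stirling's formula with explicit error. When $m\ge ck$ with $c$ somewhat large, the inequality fails for all $k$ beyond an explicit $k_0$. However, at the critical ratio $m/k\approx 541/262\approx 2.065$ the crude bound above is not strong enough: the loss $k^{\pi(2k)}\approx e^{2k}$ beats the gain $(1.065e)^k$.

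So the third step is the refinement, and this is where we expect the real obstacle. Primes $p\in(\sqrt{m},2k]$ divide each term at most once, and primes $p\in(k,2k]$ divide exactly one term to exponent at most $\lfloor\log m/\log p\rfloor$, often $1$. Rather than discarding those terms entirely, we would keep them and bound their contribution by $\prod_{k<p\le 2k}p$. This product is $\le 4^{k}$ by the Erdős bound, or $e^{(1+\epsilon)k\log 2}$ by explicit Chebyshev estimates. Only the $\pi(k)$ maximal-power terms are discarded. The resulting inequality is
\begin{align*}
(m-k+1)^{\,k-\pi(k)}\le (k-1)!\prod_{k<p\le 2k}p .
\end{align*}
We would also sharpen the bound for the small primes by counting the terms divisible by $p^{j}$ exactly, that is, $\lceil k/p^j\rceil$ versus $\lfloor k/p^j\rfloor$.

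Compare the two sides with explicit bounds. The left side is roughly $(1.065k)^{k}e^{-k\log k/\log k}$, and the right side is roughly $(k/e)^k2^{k}$. Since $1.065\,e>2$, this closes for $k$ beyond an explicit bound, with the constant $541/262$ arising as essentially the threshold where the margin vanishes for the worst residual $k$. For the finitely many remaining $k$, and for $m$ below the range where the asymptotic estimates apply (note that $m\ge 2k+13$ dominates for small $k$), we would finish by direct computation. One route is to check, for each such $k$ and each $m$ up to the explicit bound, that some term of $(m-k+1)\cdots m$ has a prime factor $>2k$. Alternatively, we would use the finiteness of $2k$-smooth numbers in short gaps via Størmer/Lehmer-type tables of consecutive smooth integers.
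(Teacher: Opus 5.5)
The paper does not prove this lemma; it is quoted from Laishram and Shorey \cite[Theorem 1(a)]{laishram_shorey_GPF}. Your proposal has a genuine gap at Step 3, which is the step meant to reach the ratio $m/k\approx 541/262$.

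Put $m-k+1=(c-1)k$. By your own estimate, the left side of the refined inequality is about $((c-1)k)^k e^{-k}$. The right side is $(k-1)!\prod_{k<p\le 2k}p\approx (k/e)^k e^{\theta(2k)-\theta(k)}$. Even with your value $\prod_{k<p\le 2k}p\approx 2^k$, the ratio of left to right is about $((c-1)/2)^k$, which tends to $0$ at $c=541/262$. The remark ``$1.065e>2$'' drops the factor $e^{-k}$ coming from the $\pi(k)$ discarded terms. Your value of the prime product is also wrong: $\theta(2k)-\theta(k)\sim k$ by the prime number theorem, so $\prod_{k<p\le 2k}p=e^{(1+o(1))k}$, not $e^{(1+\epsilon)k\log 2}$. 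The true ratio is therefore about $((c-1)/e)^k$. Keeping a term divisible by a prime $p\in(k,2k]$ gains a factor of about $(c-1)k$ but costs $p>k$, so at $c\approx 2.065$ essentially nothing is won.

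Consequently, both your crude and refined inequalities give a contradiction only when $m/k$ exceeds about $1+e\approx 3.72$, and considerably more for moderate $k$. The whole range $\max(2k+13,\tfrac{541}{262}k)\le m\lesssim 3.7k$ stays open for infinitely many $k$. So there is no finite list of residual $k$ to finish by computer.

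The missing idea is that in this range the lemma is a statement about primes in short intervals. If $m\le 4k+1$, any term $t\le m<2(2k+1)$ with a prime factor $p>2k$ must equal $p$. Hence the conclusion holds
\begin{itemize}
\item for $m\le 3k$, if and only if there is a prime in $(2k,m]$;
\item for $3k<m\le 4k+1$, if and only if there is a prime in $(m-k,m]$.
\end{itemize}

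The constants $13$ and $541/262$ are prime-gap data, which a Sylvester--Erd\H{o}s count cannot see.
\begin{itemize}
\item For $k=57$ and $m=2k+12=126$, the block $[70,126]$ has no prime factor above $113<114$, since the next prime is $127$.
\item For $k=262$ and $m=540$, the block $[279,540]$ has no prime factor above $523<524$, since the next prime is $541$. This is exactly where $541/262$ comes from, not from a vanishing margin in a counting inequality.
\end{itemize}

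A correct argument needs the following ingredients.
\begin{itemize}
\item For $m\le 3k$: an explicit short-interval result, namely a prime in $(x,\tfrac{541}{524}x]$ for all $x$ beyond an explicit bound (from Rosser--Schoenfeld or Dusart type estimates), plus a prime-gap check below that bound.
\item For $m>3k$: a prime in $(m-k,m]$ suffices. Nagura-type results supply one for $m\le 6k$ when $m-k\ge 25$, and sharper explicit bounds push this further.
\item Beyond such ranges, an Erd\H{o}s-type count like your Step 1 can be invoked.
\item For $k\le 4$, where $k-\pi(2k)=0$, St{\o}rmer-type finiteness results are needed, as you indicated.
\end{itemize}
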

   The following result is due to Nair and Shorey \cite[Theorem 2]{nair_shorey_GPF}.

    \begin{lemma}\label{4.42k}
     Let $m >4k$, $k\geq 2$ be such that $(m,k) \notin T$, where 
     \begin{align*}
        T:=&\{ (m,2):m=9,14,15,20,24,27,35,48,63,80,125,224,2400,4374 \} \cup \\
        &\{(m,3):m=13,14,20,24,25, 26, 48, 54, 63, 64, 98, 350\} \cup \\
        &\{(m,4): m=24,25,32,33,48,49,63\} \cup \{(m,5):m=24,32, 48\} \cup \{(m,7):m=29,30\}.
        \end{align*}
     
     Then $P(\Delta(m,k)) >4.42k,\ \text{and moreover}\  P(\Delta(m,k)) >4.5k$ unless $k \in \{82,83\}$.
    \end{lemma}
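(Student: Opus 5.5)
The plan is a Sylvester–Erdős type counting argument, made effective with explicit prime-number estimates and finished off by computation. Suppose, for contradiction, that $m>4k$, $k\ge 2$, and $P(\Delta(m,k))\le 4.42k$ (resp.\ $\le 4.5k$). Then every term $m+i$, $0\le i<k$, factors only over primes $p\le 4.42k$. Primes $p>k$ divide at most one term $m+i$ each. For each prime $p\le k$, I will choose an index $i_p$ with $\nu_p(m+i_p)$ maximal and delete the term $m+i_p$. This leaves a set $S$ of at least $k-\pi(k)$ indices. By the standard argument, for $i\in S$ the $p$-parts with $p\le k$ satisfy
\[
\prod_{i\in S}\prod_{p\le k}p^{\nu_p(m+i)} \;\Big|\; (k-1)!.
\]
Each prime $p\in(k,4.42k]$ then contributes to at most one term of $S$, with $p$-part at most $m+k-1$.

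The next step is to remove the large-prime contribution more economically. Terms in $S$ whose largest prime factor exceeds $k$ are at most $\pi(4.42k)-\pi(k)$ in number. Discarding those terms as well leaves at least $k-\pi(4.42k)$ terms that are $k$-smooth. These terms are all $\ge m>4k$, so their product satisfies
\[
m^{\,k-\pi(4.42k)}\le (k-1)!.
\]
Using $\log (k-1)!\le k\log k - k+O(\log k)$ and explicit bounds $\pi(x)<1.2551\,x/\log x$ (Rosser--Schoenfeld/Dusart), this is impossible once $k$ exceeds an explicit bound $k_0$, since $m>4k$ gives $k\log 4k$ on the left. If this direct inequality is too weak near the $m\approx 4k$ boundary, I would sharpen it by a second inequality of the form
\[
\binom{m+k-1}{k}\le \prod_{p\le 4.42k}p^{\lfloor \log(m+k-1)/\log p\rfloor},
\]
controlled via $\theta(x)$ and $\psi(x)$ estimates. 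Together with $\binom{m+k-1}{k}\ge (m/k)^k\ge 4^k$, this forces $k\le k_0$ and also bounds $m$ in the intermediate range.

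The remaining range, $k\le k_0$ and $m$ below the bound obtained above, is finite and is settled by direct computation. The exceptions found there make up the set $T$ together with the two values $k\in\{82,83\}$ that separate the $4.42k$ and $4.5k$ statements. For very small $k$ (say $k\le 7$), $m$ is not a priori bounded, since the counting gives nothing. There I would instead use the finiteness of pairs of consecutive $P$-smooth integers, via Størmer's theorem and Lehmer's tables, which list all pairs $(N,N+1)$ with $P(N(N+1))\le 41$. This gives the finitely many $m$ listed in $T$ for $k=2,3,4,5,7$. For the intermediate $k$, I would reduce to this case by observing that if all $k$ consecutive terms are smooth, then some adjacent pair is too.

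I expect the main obstacle to be making the analytic inequality strong enough near $m\approx 4k$ with the constant $4.42$ close to optimal. The crude bound $m^{k-\pi(4.42k)}\le (k-1)!$ loses too much there, so the refined prime-power product estimate with sharp explicit $\theta$-bounds is needed to bring $k_0$ down to a computationally feasible size.
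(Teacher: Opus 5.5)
The paper gives no proof of this lemma. It is quoted as Theorem 2 of Nair and Shorey, so your plan has to stand on its own. Your Erd\H{o}s deletion step is sound, and so is the use of Lehmer's table for $k\le 9$. The step that is supposed to bound $k$, however, is false. Neither of your inequalities gives a contradiction when $m$ is a bounded multiple of $k$, no matter how large $k$ is. With $m=4k+1$ and $\pi(4.42k)\sim 4.42k/\log(4.42k)$,
\[
(k-\pi(4.42k))\log m-\log (k-1)!\approx k\,(\log 4+1-4.42)<0 .
\]
The left side is not ``$k\log 4k$'': the discarded amount $\pi(4.42k)\log m\approx 4.42k$ swamps the gain $k\log 4$. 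Likewise $\log\binom{5k}{k}\approx 2.50k$, while the right side of your second inequality is at least $e^{\theta(4.42k)}\approx e^{4.42k}$. The bound $\binom{m+k-1}{k}\ge 4^k$ is weaker still. Asymptotically, both inequalities become contradictory only for $m\gtrsim e^{3.42}k\approx 30k$, and much later for moderate $k$. So nothing forces $k\le k_0$, and your ``remaining finite range'' is infinite.

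The missing idea is primes in short intervals; this is where the hypothesis $m>4k$ actually does its work. In your plan it enters only through $m^{k-\pi(4.42k)}$ and $4^k$, which is a sign that something is missing. For $m$ up to a growing multiple of $k$, explicit results of the type ``a prime in $(x,x+x/(25\log^2 x)]$ for $x$ large'' put a prime $p\in[m,m+k)$, and then $p\ge m>4k$. For $4k<m\le 4.5k$, use instead a prime in $(4.5k,5k]\subseteq[m,m+k)$. Only beyond this range does the Sylvester--Erd\H{o}s counting take over, and the overlap of the two ranges is what produces an effective $k_0$.

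The small-$k$ part has the same kind of hole:
\begin{itemize}
\item The counting gives no bound on $m$ whenever $\pi(4.5k)\ge k$. This holds for every $k\le 47$ (for instance $\pi(180)=41>40$), not just for $k\le 7$.
\item Just beyond that range the bound is useless in practice: for $k=48$ you have $\pi(216)=47$, so you only get $m\le 47!$.
\item Your reduction to a smooth adjacent pair needs the complete list of $N$ with $P(N(N+1))\le 4.5k$. Lehmer's table stops at $41$, so it handles only $k\le 9$. For $10\le k\le 47$ the required smoothness bound runs up to about $210$.
\end{itemize}
So for these $k$, and more generally for moderate $k$ where the counting bound on $m$ is astronomical and prime gaps can exceed $k$, you need an ingredient the proposal does not supply. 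Possible options are a refined counting inequality, tables of longer smooth strings, or a sieve computation over the rare prime gaps longer than $k$.
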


   C. St{\o}rmer \cite{stormer} proved that, for every fixed \(n\in\mathbb{Z}^{+}\), the set of integers \(m\) satisfying \(P(m(m-1))\leq n\) is finite. Later, Lehmer \cite[Table IA]{lehmer} listed all such integers \(m\) for each prime bound \(p\leq 41\). He further provided, in Table IB, the factorisation of \(m(m-1)\) for all integers \(m\geq 10^{5}\) with \(P(m(m-1))\leq 41\). The following lemma follows from a table given by  Najman in \cite{najman}.

    \begin{lemma}\label{table-najman}
    For any integers $m>31$, $m'>37$, $P(\Delta(m,6))>31$ and $P(\Delta(m',6))>37$.
    \end{lemma}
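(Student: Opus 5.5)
The plan is to reduce the statement to a finite search over pairs of consecutive smooth integers, whose complete enumeration is known from St{\o}rmer's theorem as made explicit by Lehmer \cite{lehmer} and Najman \cite{najman}. Fix $B\in\{31,37\}$ and suppose $m$ satisfies $P(\Delta(m,6))\le B$. Every divisor of $\Delta(m,6)$ is then $B$-smooth. In particular, for each $i=0,1,2,3,4$ the product $(m+i)(m+i+1)$ is $B$-smooth.

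Write $\mathcal S_B=\{x\ge 1: P(x(x+1))\le B\}$. This set is finite by St{\o}rmer, and it is listed completely in Lehmer's Table IA for every prime bound up to $41$. Najman's table independently confirms this list and extends it. The condition above says precisely that the five consecutive integers $m,m+1,m+2,m+3,m+4$ all lie in $\mathcal S_B$. So it suffices to show the following. For $B=31$, every run of five consecutive elements of $\mathcal S_{31}$ starts at some $x\le 31$. For $B=37$, every such run in $\mathcal S_{37}$ starts at some $x\le 37$. This is equivalent to the statement that any six consecutive integers that are all $B$-smooth begin at most at $B$.

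The computation itself is routine. Read off $\mathcal S_B$ from the tables and scan it for maximal blocks of consecutive integers. Large elements of $\mathcal S_B$ are very sparse, since the tables have only a few hundred entries with isolated large values. Hence beyond a small range every element is isolated or occurs in very short runs. Only the initial segment of $\mathcal S_B$ can contain long runs, and this segment can be checked by hand or quickly in SageMath. For instance, $B=31$ gives runs of six consecutive $31$-smooth integers such as $27,\dots,32$. These appear only for starting points $\le 31$, which is exactly why the bound $m>31$ appears in the statement.

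The main obstacle is not the scan but the reliance on the completeness of the list $\mathcal S_B$. Completeness is what makes the argument a proof rather than a numerical check. I would therefore cite Lehmer's Pell-equation enumeration, or Najman's verified computation, for this completeness. If an independent check is wanted, one can apply a cheaper criterion directly inside Lehmer's Table IA, which also lists the factorisations of the large entries. Simply confirm that no entry $x>B$ has $x+1,x+2,x+3,x+4$ also in the table.
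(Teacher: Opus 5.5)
Your argument is correct, but it reaches the lemma by a different route from the paper. The paper reads the statement directly off Najman's table of strings of consecutive smooth integers.

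You instead note that $m,\dots,m+5$ are all $B$-smooth if and only if $m,\dots,m+4$ all lie in the St{\o}rmer set $\mathcal S_B=\{x: P(x(x+1))\le B\}$. You then scan Lehmer's complete enumeration of $\mathcal S_B$ for chains of five consecutive elements. A small correction: the list itself is Table IA, while the factorisations of the large entries are in Table IB.

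This reduction is valid in both directions. Its advantage is that it rests only on Lehmer's classical Pell-equation enumeration. The paper already uses that table in Lemma \ref{nofactor3-5}, where $P(\Delta(n-k+1,k))\le 31$ for $k=3,4,5$ is likewise reduced to runs of consecutive entries of Table IA. So your route makes the paper's computational input uniform. The price is that you must actually carry out the scan over the several hundred entries of $\mathcal S_{31}$ and $\mathcal S_{37}$, whereas citing Najman is a one-line lookup of a precomputed answer. Your sparsity remarks are heuristic and are not part of the proof.

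Finally, the runs that pin down the thresholds are not $27,\dots,32$. They are $31,\dots,36$ for $B=31$, which shows $m>31$ is sharp. For $B=37$ the relevant run is $35,\dots,40$. Since $41$ divides $\Delta(36,6)$ and $\Delta(37,6)$, the condition $m'>37$ could be relaxed to $m'>35$.
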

    We shall use the following theorem of Lucas'\cite{lucas}, commonly known as Lucas' theorem together with an immediate consequence.
    \begin{lemma}\label{lucas}
        Let $m$ and $m'$ be non-negative integers with $m\geq m'$ and let $p$ be a prime. Write
        $m=\sum\limits_{i=0}^{r}m_ip^i$ and $m'=\sum\limits_{i=0}^{r}m'_ip^i$ for their base-$p$ expansions. Then
        $ \binom{m}{m'} \equiv \prod\limits_{i=0}^{r} \binom{m_i}{m'_i} \pmod{p},$
        where we use the convention that \(\binom{u}{v}=0\) if \(u<v\). Consequently, if \(p\mid m\) and \(p\nmid m'\), then $\nu_p\left(\binom{m}{m'}\right)\geq 1.$
    \end{lemma}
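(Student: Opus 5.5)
The plan is the standard generating-function proof in $\mathbb F_p[x]$, followed by a one-line specialisation for the consequence.

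\textbf{Step 1 (Frobenius).} Work in $\mathbb F_p[x]$. For $0<j<p$ we have $p\mid\binom pj$, because $p$ divides $p!$ but not $j!(p-j)!$. Hence
\[
(1+x)^p\equiv 1+x^p \pmod p .
\]
Iterating this gives $(1+x)^{p^i}\equiv 1+x^{p^i}\pmod p$ for every $i\ge 0$.

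\textbf{Step 2 (expand both sides).} Write $m=\sum_{i=0}^r m_ip^i$ with digits $0\le m_i\le p-1$. Then
\[
(1+x)^m=\prod_{i=0}^r\bigl((1+x)^{p^i}\bigr)^{m_i}\equiv\prod_{i=0}^r(1+x^{p^i})^{m_i}
=\prod_{i=0}^r\sum_{j_i=0}^{m_i}\binom{m_i}{j_i}x^{j_ip^i}\pmod p .
\]
The coefficient of $x^{m'}$ on the left is $\binom m{m'}$. On the right, a monomial $x^{\sum_i j_ip^i}$ has $0\le j_i\le m_i\le p-1$. By uniqueness of base-$p$ expansions, $\sum_i j_ip^i=m'$ forces $j_i=m'_i$ for all $i$.

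So the coefficient of $x^{m'}$ on the right is $\prod_i\binom{m_i}{m'_i}$ when every $m'_i\le m_i$, and $0$ when no admissible choice exists. This is exactly the product under the convention $\binom uv=0$ for $u<v$.

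The only point needing care is this uniqueness step: the digit bound $j_i\le p-1$ is what rules out carries, so each exponent is hit by at most one tuple $(j_i)$. Since $m\ge m'$, both expansions can be taken of the same length $r$, padding with zero digits.

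\textbf{Step 3 (consequence).} If $p\mid m$, then $m_0=0$. If $p\nmid m'$, then $m'_0\ge 1$. The factor $\binom{m_0}{m'_0}=\binom{0}{m'_0}$ therefore vanishes, so $\binom m{m'}\equiv 0\pmod p$.

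Since $m\ge m'\ge 0$, the binomial coefficient is a positive integer. Hence $\nu_p\bigl(\binom m{m'}\bigr)\ge 1$.
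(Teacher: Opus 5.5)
Your proof is correct and complete. It is the standard generating-function argument ($(1+x)^{p^i}\equiv 1+x^{p^i}$ plus uniqueness of base-$p$ digits). The paper itself gives no proof of the congruence beyond citing Lucas, and your Step 3, where the factor $\binom{0}{m'_0}$ vanishes, is exactly the ``immediate consequence'' the paper has in mind. For the consequence alone you could bypass Lucas entirely via $m'\binom{m}{m'}=m\binom{m-1}{m'-1}$, using $p\mid m$ and $p\nmid m'$.
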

    We shall use the following classical result due to Legendre \cite[Ch. 17, p. 263]{Hasse}. 
\begin{lemma}\label{sigma}
    Let $m\in\mathbb Z^+$ and $p$ be a prime. Then $\nu_p(m!)=\frac{m-\sigma_p(m)}{p-1},$
    where $m=\sum_{i=0}^{r}m_ip^i$ is the base-\(p\) expansion of \(m\), with \(0\leq m_i\leq p-1\) and $\sigma_p(m)=\sum_{i=0}^{r}m_i.$
\end{lemma}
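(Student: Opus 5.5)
The plan is to derive Legendre's formula in two stages: first the classical count $\nu_p(m!)=\sum_{i\ge 1}\lfloor m/p^i\rfloor$, and then a rewriting of that sum in terms of the base-$p$ digits of $m$.

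\emph{First stage.} Since $\nu_p$ is additive on products, we have $\nu_p(m!)=\sum_{t=1}^{m}\nu_p(t)$. We write $\nu_p(t)=\sum_{i\ge1}\mathbf 1[p^i\mid t]$, which is a finite sum because the indicator vanishes once $p^i>t$. Interchanging the two finite sums gives
\begin{align*}
\nu_p(m!)=\sum_{i\ge1}\#\{1\le t\le m:\ p^i\mid t\}=\sum_{i=1}^{r}\left\lfloor \frac{m}{p^i}\right\rfloor .
\end{align*}
Here $r$ is the top index in the expansion $m=\sum_{i=0}^{r}m_ip^i$, so all terms with $i>r$ vanish.

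\emph{Second stage.} We substitute the base-$p$ expansion into each term. For $1\le i\le r$,
\begin{align*}
\frac{m}{p^i}=\sum_{j\ge i}m_jp^{j-i}+\sum_{j<i}m_jp^{j-i}.
\end{align*}
The first sum is an integer. The second lies in $[0,1)$, because
\begin{align*}
\sum_{j<i}m_jp^{j}\le (p-1)\sum_{j<i}p^j=p^i-1 .
\end{align*}
Hence $\lfloor m/p^i\rfloor=\sum_{j= i}^{r}m_jp^{j-i}$.

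We now sum over $i$ and swap the order of summation:
\begin{align*}
\sum_{i=1}^{r}\sum_{j=i}^{r}m_jp^{j-i}=\sum_{j=0}^{r}m_j\sum_{i=1}^{j}p^{j-i}=\sum_{j=0}^{r}m_j\,\frac{p^j-1}{p-1}.
\end{align*}
The term $j=0$ contributes $0$, consistent with the empty inner sum. Splitting the numerator, the last expression equals
\begin{align*}
\frac{1}{p-1}\Big(\sum_j m_jp^j-\sum_j m_j\Big)=\frac{m-\sigma_p(m)}{p-1}.
\end{align*}

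The only step needing care is the floor computation in the second stage. It rests on the digit bound $0\le m_j\le p-1$, which guarantees that the fractional part is strictly less than $1$. Everything else is interchanging finite sums.
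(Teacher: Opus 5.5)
Your proof is correct and complete. You first obtain $\nu_p(m!)=\sum_{i\ge1}\lfloor m/p^i\rfloor$ by counting multiples of $p^i$. You then evaluate each floor as $\sum_{j\ge i}m_jp^{j-i}$ using the digit bound, and finally interchange the sums to get $\sum_j m_j\frac{p^j-1}{p-1}$. The edge case $m<p$ is also handled correctly, since both sides are then $0$.

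The paper gives no proof of this lemma. It quotes it as Legendre's classical formula with a citation to Hasse, so your argument supplies the standard derivation behind that citation rather than competing with a proof in the paper.

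One step is left implicit: the terms with $i>r$ vanish because $m\le (p-1)\sum_{i=0}^{r}p^i=p^{r+1}-1$. This follows from the same digit bound $0\le m_j\le p-1$ that you use later, and it holds even if the expansion has leading zeros.
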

 \begin{lemma}\label{equationmm'}
    Let $N\leq 1005$ be a positive integer. Then the following assertions hold:
    \begin{enumerate}[$(i)$]
        \item There do not exist primes $p$ and $q,$ with $7 \leq p, q \leq 31,$ such that $p^2\mid N \ \text{and} \ q^2\mid(N-1).$
        \item The only triples \((N,p,q)\), where $N\leq 1005$ and $p,q\geq 11$ are primes, satisfying $p^2\mid N \ \text{and} \ q^2\mid(N-2)$ are $(N, p, q)\in\{(847, 11, 13), (363, 11, 19)\}.$
        \item There do not exist a prime $p\geq 7$ such that $p^2\mid(N-2)$ and $5^3\mid(N(N-1))$.  
    \end{enumerate}
       
    \end{lemma}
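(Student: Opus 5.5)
This lemma is a finite statement about integers $N\le 1005$, so I would prove it by an explicit but small case check, organised so that each part reduces to a handful of congruence conditions. In each part the square (or cube) conditions force $N$ into a residue class modulo a product of prime powers. The point is that these moduli quickly exceed $1005$, leaving very few candidates to inspect.

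For part $(i)$, note that $\gcd(N,N-1)=1$, so $p\neq q$. The conditions $p^2\mid N$ and $q^2\mid N-1$ determine $N$ modulo $p^2q^2$ by the Chinese remainder theorem. Since $p,q\ge 7$ and they are distinct, $p^2q^2\ge 49\cdot 121=5929>1005$, so there is at most one candidate $N$ in $[1,1005]$ for each pair. It is easiest to enumerate directly. The multiples of $p^2$ up to $1005$ with $7\le p\le 31$ are:
\begin{itemize}
\item $p=7$: multiples of $49$;
\item $p=11$: $121,242,\dots,968$;
\item $p=13$: multiples of $169$;
\item $p=17$: $289,578,867$;
\item $p=19$: $361,722$;
\item $p=23$: $529$;
\item $p=29$: $841$;
\item $p=31$: $961$.
\end{itemize}
That is only about $20+8+5+3+2+1+1+1$ values. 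For each such $N$ one checks whether $N-1$ is divisible by $q^2$ for some prime $13\le q\le 31$, or by $49$ or $121$. This is a short table check, and no coincidence should occur.

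Part $(ii)$ is handled the same way with $N-2$ in place of $N-1$. Now $\gcd(N,N-2)\mid 2$, so again $p\neq q$ when both are odd, and $p^2q^2\ge 121\cdot169>1005$. Enumerate the multiples of $p^2\le 1005$ with $p\ge 11$:
\[
121k\ (k\le 8),\quad 169k\ (k\le 5),\quad 289,578,867,\quad 361,722,\quad 529,\quad 841,\quad 961 .
\]
For each, factor $N-2$ and look for a square of a prime $\ge 11$. The two hits should be:
\[
847-2=845=5\cdot 13^2, \qquad 363-2=361=19^2 .
\]
Exhibiting these shows the listed triples occur, and the exhaustive table shows there are no others.

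For part $(iii)$, $5^3\mid N(N-1)$ with $\gcd(N,N-1)=1$ forces $N\equiv 0$ or $1 \pmod{125}$. That gives at most $16$ values of $N\le 1005$:
\[
N\in\{125,250,\dots,1000\}\cup\{126,251,\dots,876,1001\}.
\]
For each, check whether $N-2$ has a square prime factor $\ge 7$. One can also argue modularly: $N-2\equiv -2$ or $-1 \pmod{125}$, and one simply tests $49,121,169,\dots$ against these sixteen numbers.

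The only real obstacle is making sure the enumeration is complete and error-free. I would present compact lists (or cite a SageMath verification, as the paper does elsewhere) rather than a conceptual argument, since none is needed beyond the CRT bound showing that at most one $N$ per prime pair can occur.
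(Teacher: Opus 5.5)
Your proposal is correct and matches the paper's proof. The paper also settles each part by a finite computer check: it writes $N=mp^2$, $N-1=m'q^2$ with $m\le\lfloor 1005/p^2\rfloor$, $m'\le\lfloor 1004/q^2\rfloor$, and verifies that $mp^2-m'q^2=1$ has no solutions. That is the same search as your enumeration of multiples of $p^2$, and carrying out your tables confirms the stated outcomes, including the only hits $847-2=5\cdot 13^2$ and $363-2=19^2$ in $(ii)$. One slip: in $(iii)$ the class $N\equiv 1\pmod{125}$ also contains $N=1$, giving $17$ candidates rather than $16$, which is harmless since then $N-2=-1$.
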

    \begin{proof}
        For $(i),$ suppose that $N=mp^2$ and $N-1=m'q^2$, where $p$ and $q$ are primes satisfying \(7\leq p,q\leq 31\). Since \(N\leq 1005\), we have $1\leq m\leq \left\lfloor\frac{1005}{p^2}\right\rfloor$ and $1\leq m'\leq \left\lfloor\frac{1004}{q^2}\right\rfloor$. Then $mp^2-m'q^2=1$. By computation, we verify that there are no $m, m', p, q$ in the given range that satisfy this equation. The proof of $(ii)$ and $(iii)$ follows similarly by computation.
    \end{proof}
   \subsection{Newton polygon}
    Let $f(x) = \sum\limits_{j=0}^{n} a_j x^j \in \mathbb{Z}[x]$ with \( a_0 a_n \ne 0 \). Let $p$ be a prime and 
    $$S := \{ (0, \nu_p(a_n)), (1, \nu_p(a_{n-1})), \ldots, (n-1, \nu_p(a_1)), (n, \nu_p(a_0)) \},$$ a set of points in the extended plane. Then the Newton polygon of $f(x)$ with respect to $p$ is the lower edges of the convex hull formed by the points in $S$ starting from $(0,\nu_p(a_n))$ and ending at $(n,\nu_p(a_0))$ and is denoted by $NP_p(f(x))$. We first state an immediate consequence of Dumas' theorem on the Newton polygon of a product of polynomials\cite{dumas}.

    \begin{lemma}\label{cor-dumas}
    Let $f(x)\in \mathbb{Z}[x]$ be a polynomial of degree $n$ with $f(0)\neq 0$. Let $p$ be a prime. Suppose $f(x)=g_1(x)g_2(x)\cdots g_r(x)$ for $g_i(x)\in \mathbb{Z}[x]$ (are not necessarily irreducible) for $1\le i \le r$. Let $S_1= \{ (\Tilde{x_0},y_0), (\Tilde{x_1},y_1),(\Tilde{x_2},y_2),\ldots,(\Tilde{x_t},y_t) \}$ be the complete set of lattice points with $0=\Tilde{x_0}< \Tilde{x_1} < \Tilde{x_2} < \cdots < \Tilde{x_t}$. Let $b_j=\Tilde{x_j}-\Tilde{x}_{j-1}$ for $1 \le j \le t$. For each $j$ there exists exactly one $i$ such that $\epsilon_{i,j}\in \{0,1\}$ and $\deg(g_i)=\sum_{j=1}^{t}\epsilon_{i,j}b_j$.
    \end{lemma}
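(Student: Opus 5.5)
The statement follows from Dumas' theorem by keeping track of where each factor's edges can begin and end. Dumas' theorem says that if $f=gh$ with $g,h\in\mathbb Z[x]$ and $f(0)\neq 0$, then the edges of $NP_p(f(x))$ are obtained by taking all edges of $NP_p(g(x))$ and $NP_p(h(x))$. These edges, viewed as vectors, are arranged in increasing order of slope and concatenated starting from $(0,\nu_p(a_n))$. By induction on $r$, the same holds for $g_1g_2\cdots g_r$: the edge vectors of $NP_p(f(x))$ form the multiset union of the edge vectors of the $NP_p(g_i(x))$, sorted by slope.

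Two preliminary facts are needed.
\begin{enumerate}[$(a)$]
\item Since $f(0)\neq 0$ and $f$ has nonzero leading coefficient, each $g_i$ has nonzero constant term and nonzero leading coefficient. Hence $NP_p(g_i(x))$ is well defined and its total horizontal length is $\deg(g_i)$.
\item Every vertex of every $NP_p(g_i(x))$ is a lattice point, because the coefficients lie in $\mathbb Z$ and their valuations are integers. So each edge vector of a factor is a vector in $\mathbb Z^2$.
\end{enumerate}

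Next, fix a slope $s$ occurring in $NP_p(f(x))$ and look at the corresponding edge $E$ of $NP_p(f(x))$. By Dumas' theorem, $E$ is the concatenation of the slope-$s$ edges of the various $g_i$. The order among equal slopes can be chosen arbitrarily, so fix one choice. Each piece is an integer vector and the starting point is a lattice point, so all junction points between consecutive pieces are lattice points on $E$. Hence these junction points belong to $S_1$, which is the complete list of lattice points on the polygon.

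It follows that each segment $[(\tilde x_{j-1},y_{j-1}),(\tilde x_j,y_j)]$ has no lattice point in its interior and so contains no junction point. Therefore it lies entirely inside the piece contributed by a single factor $g_i$. Set $\epsilon_{i,j}=1$ for that $i$ and $\epsilon_{i',j}=0$ for all $i'\neq i$; this gives exactly one such $i$ for each $j$.

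Conversely, every edge of $NP_p(g_i(x))$ is a union of consecutive such primitive segments, since its endpoints are junction points and hence lattice points on the polygon. Summing horizontal lengths and using $(a)$ gives
\[
\deg(g_i)=\sum_{j=1}^t\epsilon_{i,j}b_j .
\]

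The main care point is the step showing that junction points are lattice points. This rests entirely on the integrality of the factors' Newton polygon vertices in $(b)$, together with fixing a consistent ordering among edges of equal slope so that the assignment of segments to factors is well defined.
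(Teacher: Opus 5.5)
Your argument is correct and follows the paper's route: the paper records this lemma as an immediate consequence of Dumas' theorem and gives no further proof. Your observation that every junction point between the translated edges of the factors' Newton polygons is a lattice point of $NP_p(f(x))$, hence lies in $S_1$, is the detail the paper leaves implicit; it forces each segment from $\tilde x_{j-1}$ to $\tilde x_j$ to come from a single $g_i$, which gives $\deg(g_i)=\sum_j \epsilon_{i,j}b_j$.
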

    Using the same theorem of Dumas, Filaseta \cite[Lemma 2]{filaseta_all_but_finite_bessel} established the following useful irreducibility criterion.

    \begin{lemma}\label{fil1}
        Let $k$ be a positive integer such that $k\leq \frac{n}{2}$, $g(x)=\sum\limits_{j=0}^n a_jx^j\in\mathbb Z[x]$ and $p$ be a prime such that $p\nmid a_n$, $p\mid a_j$ for all $j\in\{0,1,\cdots,n-k\}$, and the right-most edge of the Newton polygon for $g(x)$ with respect to $p$ has slope $<\frac1k$. Then for any integers $c_0,c_1,\cdots,c_n$ with $|c_0| = |c_n| =1$, the polynomial $f(x)=\sum\limits_{j=0}^n a_jc_jx^j$ cannot have a factor in $\mathbb Z[x]$ of degree $k$.
    \end{lemma}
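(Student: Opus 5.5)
The statement to prove is Lemma \ref{fil1}. I will argue by contradiction, using Dumas' theorem (Lemma \ref{cor-dumas}) applied to $f$. The key point is that $f$ and $g$ have the same Newton polygon with respect to $p$. Indeed, since $|c_0|=|c_n|=1$, the endpoints are unchanged: $\nu_p(a_nc_n)=\nu_p(a_n)=0$ and $\nu_p(a_0c_0)=\nu_p(a_0)$. Every other point satisfies $\nu_p(a_jc_j)\ge \nu_p(a_j)$, so each point of $f$ lies on or above the corresponding point of $g$. This shows $NP_p(f)$ lies on or above $NP_p(g)$ and shares its endpoints. For the comparison we only need one piece of information: the rightmost edge of $NP_p(f)$ has slope at most the slope of the rightmost edge of $NP_p(g)$, hence $<1/k$. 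One way to see this is to reorganise the argument around slopes. Every edge of $NP_p(f)$ has slope at most that of its last edge, and the total height gained is $\nu_p(a_0)$ over horizontal length $n$.

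Next I will locate the lattice points on the relevant edges. Using the ordering in $S$, the point $(n-j,\nu_p(a_j))$ corresponds to $a_j$. The hypotheses give $p\nmid a_n$, so the polygon starts at height $0$. They also give $p\mid a_j$ for $j\le n-k$, meaning every point with $x$-coordinate $\ge k$ has height $\ge 1$.

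Now suppose $f$ has a factor of degree $k$, so $f=g_1g_2$ with $\deg g_1=k$. By Lemma \ref{cor-dumas}, $k$ must be a sum of some of the horizontal lengths $b_j$ between consecutive lattice points on $NP_p(f)$. Equivalently, $NP_p(g_1)$ is built by translating some subset of the segments of $NP_p(f)$ cut at lattice points. Write $s_i=h_i/b_i$ for the slope of segment $i$, with $h_i$ and $b_i$ integers. Because the endpoints are lattice points, each segment with $h_i>0$ has $h_i\ge1$, and therefore $b_i=h_i/s_i>h_i k\ge k$, using $s_i<1/k$. So every segment with positive height has horizontal length $>k$. Segments of height $0$ can only occur where the polygon stays at height $0$, that is, on the initial horizontal part. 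That part has length $<k$, since every point with $x\ge k$ has height $\ge1$; equivalently, the first point of positive height on the polygon occurs at some $x\le k$... The conclusion is that a sum equal to $k$ cannot include a positive-height segment, since each such segment already exceeds $k$. It must therefore consist only of height-$0$ segments, whose total length is less than $k$. This is a contradiction.

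The main obstacle is the step about horizontal segments. I need the polygon to actually rise before $x=k$, i.e. to reach positive height strictly before $x=k$. Here I need care, because the vertex at height $0$ can sit at $x=k-1$ at the latest, namely the point for $a_{n-k+1}$. The hypotheses guarantee all points with $x\ge k$ have height $\ge1$, so the total horizontal length at height $0$ is at most $k-1<k$, which suffices. A second point of care is handling the reducibility conclusion with $g_1$ possibly having a nonconstant part of slope $0$ combined with positive-slope pieces. Any inclusion of a positive-height segment forces the degree $>k$, so this combination is already excluded by the argument above.
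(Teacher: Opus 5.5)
Your argument is correct and is essentially the standard proof of Filaseta's Lemma 2, which the paper cites rather than proves: every edge of $NP_p(f)$ has slope $<1/k$, so each positive-slope lattice segment has horizontal length $>k$, and by Lemma \ref{cor-dumas} a degree-$k$ factor could use only the horizontal initial edge, whose length is at most $k-1$. One correction: $f$ and $g$ need not have the same Newton polygon, and your ``reorganise around slopes'' remark does not establish the slope comparison; the correct reason is that $NP_p(f)$ lies on or above $NP_p(g)$ and both pass through $(n,\nu_p(a_0))$, so the slope of the last edge of $NP_p(f)$ cannot exceed that of $NP_p(g)$.
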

    We consider
    $\mathcal{L}_n^{(a)}(x)=n!L_n^{(a)}(x)=\sum\limits_{j=0}^{n}(-1)^j a_jx^j,$ where $a_j=\binom{n}{j}\frac{\Delta(a+1,n)}{\Delta(a+1,j)}=\binom{n}{j}(n+a)(n+a-1)\cdots(j+a+1)=n(n-1)\cdots(j+1)\binom{n+a}{j+a}.$
    Let $n>n-r_1>n-r_2>\cdots>0$ be the $x$-coordinates of the vertices of $NP_p\left(\mathcal{L}_n^{(a)}(x)\right)$, arranged from right to left. Thus $r_1$ is the horizontal length of the right-most edge. For $j>j'$, we denote by $\phi_{(j,j')}(p)$ the slope of the edge joining the points $(n-j, \nu_p(a_j))$ and $(n-j',\nu_p(a_{j'}))$. Then 
        \begin{align}\label{phijj'}
        \phi_{(j, j')}(p)&= \frac{1}{(j-j')}\nu_p\left(\frac{\binom{n}{j'}(a+j)!}{\binom{n}{j}(a+j')!}\right).
        \end{align}
    \noindent If $j'=0$, we simply denote $\phi_{(j,0)}(p)$ by $\phi_j(p)$.  
        We denote by $\phi^{(i)}$ the slope of the $i^{th}$ edge from the right. Then the slope of the edge joining $(n-j, \nu_p(a_{j}))$ and $(n, \nu_p(a_0))$ is \begin{align}\label{phij}
         \phi_j(p)=\phi_{(j,0)}(p)=\frac{\nu_p(\Delta(a+1,j))-\nu_p\left(\binom nj\right)}{j},\end{align} and the slope of the right-most edge of $NP_p\left(\mathcal{L}_n^{(a)}(x)\right)$ is \begin{align}\label{phix1}
        \phi^{(1)}=\phi_{r_1}(p)=&\max\limits_{1\leq j\leq n}\phi_j(p).
        \end{align}
    \begin{lemma}\label{ndivaj}
        Let $n,a\in \mathbb Z^+$, $1\leq k\leq\frac{n}{2}$ and $\mathcal{L}_n^{(a)}(x)=\sum\limits_{i=0}^n (-1)^ja_jx^j$. Then the following hold:
        \begin{itemize}
            \item [(i)] For $0\leq j\leq n-1$, $n\mid a_j$ and $(n+a)\mid a_j,$ while for $0\leq j\leq n-2$, $(n-1)\mid a_j$ and $(n+a-1)\mid a_j$.
            \item[(ii)] If $p$ is a prime dividing $\Delta(n-k+1, k)\cdot\Delta(n+a-k+1,k)$, then $p\nmid a_n$ and $p\mid a_j$ for all $0\leq j\leq n-k$. 
              \end{itemize}
    \end{lemma}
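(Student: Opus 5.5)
The plan is to work directly from the formula
\[
a_j=n(n-1)\cdots(j+1)\binom{n+a}{j+a}=\binom{n}{j}\,(n+a)(n+a-1)\cdots(j+a+1),
\]
which is recorded just before the statement. Part $(ii)$ will then follow from part $(i)$ together with Lucas' theorem (Lemma \ref{lucas}).

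For $(i)$, take $0\le j\le n-1$. The product $n(n-1)\cdots(j+1)$ is nonempty and contains the factor $n$, so $n\mid a_j$. Likewise $(n+a)(n+a-1)\cdots(j+a+1)$ is nonempty and its first factor is $n+a$; since $\binom nj$ is an integer, $(n+a)\mid a_j$. For $0\le j\le n-2$ both products contain at least two consecutive factors. Hence $n(n-1)\mid a_j$, and in particular $(n-1)\mid a_j$. Similarly $(n+a)(n+a-1)\mid a_j$, so $(n+a-1)\mid a_j$. Nothing beyond reading off these factors is needed.

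For $(ii)$, first note that $a_n=1$, since both products are empty and $\binom nn=1$. Thus $p\nmid a_n$ trivially. Now take $p$ dividing some factor of $\Delta(n-k+1,k)\cdot\Delta(n+a-k+1,k)$ and fix $0\le j\le n-k$. There are two cases.
\begin{itemize}
\item If $p\mid n-i$ for some $0\le i\le k-1$, then $n-i\ge n-k+1>j$, so $n-i$ is one of the factors of $n(n-1)\cdots(j+1)$. Hence $p\mid a_j$.
\item If $p\mid n+a-i$ for some $0\le i\le k-1$, then $n+a-i\ge n+a-k+1\ge j+a+1$, so $n+a-i$ is one of the factors of $(n+a)\cdots(j+a+1)$. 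Again $p\mid a_j$.
\end{itemize}
Since $\binom nj\in\mathbb Z$, the second product divides $a_j$ in the second case, so the conclusion holds there as well.

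The only real obstacle is bookkeeping. One has to check that $j\le n-k$ places every one of the $k$ top factors inside the relevant product, which is exactly the inequality $n-k+1\ge j+1$. Lucas' theorem is not actually required once the product representation is used.
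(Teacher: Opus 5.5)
Your proof is correct and is exactly what the paper means by ``the proof follows directly from the definition of $a_j$''. You read off the factors $n, n-1$ and $n+a, n+a-1$, and for $(ii)$ the top $k$ factors, from the two product representations of $a_j$, together with $a_n=1$; the only blemish is your opening sentence, since $(ii)$ uses neither part $(i)$ nor Lucas' theorem, as you yourself note at the end.
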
 
    \begin{proof}
    The proof follows directly from the definition of $a_j$.
    \end{proof}
   Laishram and Shorey obtained the following factor-exclusion results in \cite{laishram_shorey_integer}.

 \begin{lemma}\label{previous}
    Let  $1 \leq k \leq \dfrac{n}{2}$. Then the following hold: 
    \begin{itemize}
        \item[(i)] Let $a > 0$, and $u_0 = \dfrac{a}{k}$. If there is a prime $p \geq k+2$ with
    $p \mid \Delta(n-k+1,k)\cdot\Delta(n+a-k+1,k),$ and $p \nmid \Delta(a+1, k)$ satisfying $p \geq \min(2u_0, k+u_0)$ or $p > 2k \ \text{and} \ p^2 - p \geq a,$ then $\mathcal{L}_n^{(a)}(x)$ has no factor of degree $k$.
    \item[(ii)]If $0 \le a \le 5k$, then $\mathcal{L}_n^{(a)}(x)$ has no factor of degree $k$ except when $k=1$ and $(n, a)\in \{(2,2),(4,5)\}$.
    \end{itemize}
    \end{lemma}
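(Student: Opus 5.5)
Both parts are quoted from Laishram and Shorey \cite{laishram_shorey_integer}. I would reprove them by combining Lemma \ref{ndivaj} with Filaseta's criterion, Lemma \ref{fil1}, applied to $g(x)=\mathcal{L}_n^{(a)}(x)$ with $c_j=(-1)^j$ (the sign of $a_j$).

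For $(i)$, fix a prime $p$ as in the hypothesis. Lemma \ref{ndivaj}$(ii)$ gives $p\nmid a_n$ and $p\mid a_j$ for $0\le j\le n-k$. By Lemma \ref{fil1} it then suffices to show that the right-most edge has slope $<1/k$. By \eqref{phix1} this means showing $\phi_j(p)<1/k$ for every $1\le j\le n$, where \eqref{phij} gives $j\phi_j(p)=\nu_p(\Delta(a+1,j))-\nu_p\binom nj$.
\begin{itemize}
\item For $j\le k$, the condition $p\nmid\Delta(a+1,k)$ makes the first term vanish, so $\phi_j(p)\le 0$.
\item For $j>k$, every multiple of $p$ in $\{a+1,\dots,a+j\}$ lies in $(a+k,a+j]$. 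Hence there are at most $\lfloor (j-k)/p\rfloor+1$ of them, and by Lemma \ref{sigma} we have
\[
\nu_p(\Delta(a+1,j))\le \frac{j-k}{p-1}+1+\#\{i\ge 2: p^i\le a+j\}.
\]
The aim is to show this numerator is less than $j/k$.
\end{itemize}

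To reach the bound $j/k$ I would split into two regimes.
\begin{itemize}
\item If $p>2k$ and $p^2-p\ge a$, then no $p^2$-multiple occurs in the window until $j\ge p^2-a\ge p$. Using $p\ge k+2$, the count of multiples is then at most $1+(j-k)/p$. One checks $1+(j-k)/p<j/k$ for $j>k$, and the $p^2$ terms are absorbed once $j\ge p$.
\item If instead $p\ge\min(2u_0,k+u_0)$ with $u_0=a/k$, then $a\le kp/2$ or $a\le k(p-k)$. Here $a$ is small relative to $p$, so the first multiple of $p$ above $a+k$ sits at distance comparable to $p$. Higher powers of $p$ contribute at most $\log(a+j)/\log p$, which is controlled by $j/k$ because $j\ge k+1$ and $a+j\le j(1+p/2)$.
\end{itemize}

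I would also use the negative term $-\nu_p\binom nj$ where needed. Since $p\mid n-i$ or $p\mid n+a-i$ for some $i<k$, Lemma \ref{lucas} gives $\nu_p\binom nj\ge1$ for many $j$, which saves the boundary cases where the estimate is off by one. The main obstacle is exactly this bookkeeping of the $p^2,p^3,\dots$ contributions against $j/k$ uniformly in $j$. If the crude bound fails, I would separate $j<p$, then $p\le j<p^2$, then $j\ge p^2$, and treat each range directly.

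For $(ii)$, with $a\le5k$, the plan is to produce a prime $p>\max(2k,\dots)$ dividing $\Delta(n-k+1,k)$ that avoids $\Delta(a+1,k)$, and then invoke $(i)$.
\begin{itemize}
\item Since $u_0\le5$, the condition $p\ge k+u_0$ holds as soon as $p\ge k+5$, and $p\ge 2k+1$ suffices for $k\ge5$. Lemma \ref{2k+13} (with $m=n$) or Lemma \ref{4.42k} supplies $P(\Delta(n-k+1,k))>2k$, and even $>4.42k$ when $n>4k$.
\item If such a $p$ divides $\Delta(a+1,k)$, then $p\le a+k\le6k$. With the $4.42k$ bound I would instead look for a prime exceeding $a+k$, or switch to a prime from $\Delta(n+a-k+1,k)$.
\end{itemize}
The remaining small cases are: $k\le 4$; $n<\max(2k+13,\tfrac{541}{262}k)$; the exceptional set $T$; and $k=1$, where the prime must come from $n(n+a)$. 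These are finite checks of Newton polygons over small $n,a\le 5k$, and they produce exactly the exceptions $(n,a)\in\{(2,2),(4,5)\}$.
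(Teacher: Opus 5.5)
\emph{Verdict: part $(i)$ is fixable, but part $(ii)$ has a genuine gap.} The paper gives no proof of this lemma; it is quoted from \cite{laishram_shorey_integer}. So your argument has to stand alone.

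\textbf{Part $(ii)$.} Reducing $(ii)$ to $(i)$ needs a prime $p\mid\Delta(n-k+1,k)\Delta(n+a-k+1,k)$ with $p\nmid\Delta(a+1,k)$.
\begin{enumerate}
\item Lemmas \ref{2k+13} and \ref{4.42k} only give $P(\Delta(n-k+1,k))>2k$, resp.\ $>4.42k$; the latter requires $n-k+1>4k$, not $n>4k$. For $a$ near $5k$, however, $\Delta(a+1,k)$ can contain a multiple of any prime up to $a+k\le 6k$. Nothing you cite produces a prime above $a+k$. Passing to $\Delta(n+a-k+1,k)$ runs into the same obstruction. So the case $p\mid\Delta(a+1,k)$, where $(i)$ is unavailable, is simply not handled.
\item The missing idea is that this case needs no new prime, only a two-edge Dumas argument. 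Take $p>2k$ with $p\mid(n-l)$, $0\le l\le k-1$, and $p\mid(a+j_0)$, $1\le j_0\le k$. Then $p\parallel(a+j_0)$, since $a+k\le 6k<p^2$.
\begin{itemize}
\item If $j_0>l$, Lucas' theorem gives $p\mid\binom nj$ for $j_0\le j<p$, so the right-most slope is still $<1/k$.
\item If $j_0\le l$, the right-most edge has slope $1/j_0$ and no interior lattice points. The zero-slope edge has length $l-j_0$, because $p\mid(n+a)-(l-j_0)$. The other edges have slopes in $(0,1/k)$, by the counting below. Lemma \ref{cor-dumas} then forces $k\le l<k$. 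This is exactly the argument in the proof of Lemma \ref{v0gequ0+1}.
\end{itemize}
\item Even with this repair, your residual list is not finite. For $k\ge 201$ one has $\frac{541}{262}k>2k+13$, so Lemma \ref{2k+13} misses every $n$ with $2k\le n<\frac{541}{262}k$. That is an infinite family, with $a$ up to $5k$, and it needs a real argument rather than Newton polygon checks.
\item The cases $k\le 4$ with $n$ unbounded need a St{\o}rmer--Lehmer finiteness input.
\item The case $a=0$ lies outside $(i)$ altogether; it is Schur's theorem.
\end{enumerate}

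\textbf{Part $(i)$.} Your strategy is right: by Lemma \ref{fil1} it suffices that $\nu_p(\Delta(a+1,j))<j/k$ for every $j$, and the term $\nu_p\binom nj\ge 0$ is never needed. But the estimates you wrote do not close.
\begin{itemize}
\item For the admissible data $(k,p,a,j)=(5,11,110,11)$ and $(9,11,44,77)$, the true valuations are $2$ and $8$, both below $j/k$. Your displayed bound gives $2.6>11/5$ and $8.8>77/9$.
\item Your picture of the branch $p\ge\min(2u_0,k+u_0)$ is also wrong. Here $a$ can be nearly $kp/2\approx p^2/2$, and the first multiple of $p$ after $a+k$ can be $a+k+1$. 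What is far away is the first multiple of $p^2$.
\end{itemize}
A bookkeeping that works is as follows.
\begin{enumerate}
\item Let $tp,\dots,(t+m-1)p$ be the multiples of $p$ in $\{a+1,\dots,a+j\}$. They lie in $(a+k,a+j]$, so $j\ge k+1+(m-1)p$. Hence $j/k\ge m+(2m-1)/k$, and $j/k\ge 2m-1+m/k$ when $p>2k$.
\item Write $\nu_p(\Delta(a+1,j))=m+e$ with $e=\nu_p\bigl(t(t+1)\cdots(t+m-1)\bigr)$.
\item Each branch of the hypothesis gives $a\le p^2-p$. It also gives $a\le kp/2$ when $p\le 2k$.
\item When $p\le 2k$, one has $t<p/2+1$. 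So $e\ge 1$ forces $m\ge p-t+1\ge t$, whence $e\le\nu_p((t+m-1)!)\le(2m-2)/(p-1)<(2m-1)/k$.
\item When $p>2k$, one has $t\le p$, so $e\le 1+(m-1)/(p-1)$. Moreover $e\ge 1$ forces $j\ge p^2-a\ge p>2k$. These two facts suffice.
\end{enumerate}
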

    \section{Proof of Theorem \ref{n=2,3}}
    \vspace{-0.3cm}
    
    $(i)$ Consider $\mathcal L_2^{(a)}(x)=x^2-2(a+2)x+(a+2)(a+1)$. Discriminant of $\mathcal L_2^{(a)}(x)$ is $4(a+2)^2-4(a+2)(a+1)=4(a+2)$. Hence, $\mathcal L_2^{(a)}(x)$ has a root if and only if $a+2$ is a perfect square.
    
    $(ii)$ We observe that  $\mathcal L_3^{(a)}(x)=x^3-3(a+3)x^2+3(a+3)(a+2)x-(a+1)(a+2)(a+3)$ is reducible iff it has an integer root. Assume that $\xi\in\mathbb Z$ is a root of $\mathcal{L}_3^{(a)}(x)$. Further $p|\xi$ for every prime 
    $p|(a+3)$. Let $p>2$ be such that $p|(a+3)$. Let  $p^\beta\parallel(a+3)$ and $\nu_p(a_2)=\nu_p(a_1)=\beta+\delta$, where $\delta=1, 0$ according as $p=3, p\neq 3$, respectively. Then the vertices of $NP_p\left(\mathcal L_3^{(a)}(x)\right)$ are $\{(0,0), (1,\beta+\delta), (2, \beta+\delta), (3, \beta)\}$. From $\frac{\beta}{3}<\frac{\beta+\delta}{2}<\beta+\delta$, the Newton polygon has only one edge with slope $\frac{\beta}{3}$. Then the reducibility of $\mathcal{L}_3^{(a)}(x)$ implies that $3|\beta$.  Suppose $2^\lambda\parallel(a+3)$ with $\lambda>0$. Then the vertices of $NP_2\left(\mathcal L_3^{(a)}(x)\right)$ are $\{(0,0), (1, \lambda), (2, \lambda), (3, \lambda+\nu_2(a+1))\}$. If $\lambda\geq2$, then $\nu_2(a+1)=1$ and this implies the Newton polygon has only one edge with slope $\frac{\lambda+1}{3}$. Then the reducibility of $\mathcal{L}_3^{(a)}(x)$ implies that $3|(1+\lambda)$. Hence either $\lambda=1$ or $3|(1+\lambda)$. Thus, if $a$ is even, then $a+3=m^3$ for some odd $m\in\mathbb Z^+$ and if $a$ is odd, then $a+3=2\cdot m^3$ or $a+3=2^2\cdot m^3$ for some $m\in\mathbb Z^+$. 
 
\section{Main Lemmas}
    Let $n, a\in\mathbb Z^+$, $1\leq k\leq\frac n2$ and let $p$ be a prime dividing $\Delta(n - k + 1, k)$. Then 
    \begin{enumerate}[$(i)$]
    \item  there exists an integer $l$ with $0\leq l\leq k-1$ and  $u_0\geq 1$ such that $p^{u_0}\parallel (n - l)$, 
    \item there exist unique integers $j_0$ and $v_0$ with $1\leq j_0\leq p$ and $v_0\geq 1$ such that $p^{v_0}\parallel(a+j_0)$, 
    \item writing $a + j_0 = p^{v_0}t_0$ where $p\nmid t_0$, let $t_1$ be the unique integer with $1 \leq t_1 < p$ such that  $t_0 + t_1 \equiv 0 \pmod{p}$, 
    \item 
    let $c_0$ be the least positive integer satisfying 
    \begin{align}\label{c0 def}
    a+p+p^2\leq (p-1)p^{c_0},
    \end{align}
    \item for $NP_p\left(\mathcal{L}_n^{(a)}(x)\right)$, \(r_i - r_{i-1}\) represents the length of the \(i\)-th edge from the right (with \(r_0 = 0\)) along the $x$-axis and \(\phi^{(i)}\) denotes its slope.
    \end{enumerate}
    Throughout the remainder of this paper, whenever a prime $p \mid \Delta(n-k+1,k)$ is fixed, we shall use notations 
    $ j_0,  l,  u_0,  v_0, c_0, t_0, t_1, r_i \ \text{and} \ \phi^{(i)}$ as above. The dependence of these quantities on \(p\) and on the fixed parameters \(n,   a,k\) will be omitted from the notation whenever it is clear from the context.

    By Lemmas \ref{fil1} and \ref{ndivaj}$(ii)$, if $p$ is a prime divisor of  $\Delta(n-k+1, k)\cdot\Delta(n+a-k+1,k)$, and the slope $\phi^{(1)}$ of the right most edge of $NP_p\left(\mathcal{L}_n^{(a)}(x)\right)$ satisfies $\phi^{(1)}<\frac{1}{k}$, then $\mathcal{L}_n^{(a)}(x)$ has no factor of degree $k$. Therefore, if $\mathcal{L}_n^{(a)}(x)$ has a factor of degree $k$, then for every prime $p$ dividing $\Delta(n-k+1, k)\cdot\Delta(n+a-k+1,k)$, we must have $\phi^{(1)}\geq \frac{1}{k}$. Consequently, using \eqref{phix1} we obtain
           \begin{align}\label{phij>j/k}
           p\mid(\Delta(n-k+1,k)\cdot\Delta(n+a-k+1,k))\implies 
    \begin{cases}\nu_p(\Delta(a+1,r_1))-\nu_p\left(\binom{n}{r_1}\right)\geq \frac{r_1}{k},\\
        \nu_p(\Delta(a+1,r_1)\geq\frac{r_1}{k},
        \end{cases}
        \end{align}
        where, as before, $r_1$ is the length of the right-most edge of $NP_p\left(\ \mathcal{L}_n^{(a)}(x)\right)$.   
    
    In this section, we improve this criterion by showing that even when $\phi^{(1)}\geq \frac{1}{k}$, one can still rule out 
    a factor of $\mathcal{L}_n^{(a)}(x)$ of degree $k$ by using information about $r_1$ and $\phi^{(2)}$, the slope of the second edge from the right. We start with the following result, which is also an improvement of Lemma \ref{previous}$(i)$. 

   \begin{lemma}\label{slope1}
        Let $n, a\in\mathbb Z^+$, $1\leq k\leq\frac n2$ and let $p$ be a prime and $c\geq 2$ be a real number such that 
        \begin{align}\label{ck}
        p\mid\Delta(n-k+1,k), \quad p\geq 1+ck \quad {\rm and} \quad \frac{c-1}{p-1}(p^{v_0}+p)\geq c_0.
        \end{align}
        Suppose $\mathcal{L}_n^{(a)}(x)$ has a factor of degree $k$. Then 
        \begin{enumerate}[$(i)$]
            \item  $j_0\leq r_1< \max\{j_0+t_1p^{v_0},p^{v_0}+p\}$.
            \item Further if $(p-1)\nu_p(t_0+1)\leq p^{v_0}(c-1),$ then exactly one of the following holds:
            \begin{enumerate}
                \item $r_1=j_0$ and $\phi^{(2)}<\frac{1}{k}$,
                \item $r_1=p$ and $\phi^{(2)}<\frac{1}{k}$ where $l+1\leq j_0<p\leq j_0+k$,
                \item $r_1=j_0$, $r_2=p$, $\phi^{(2)}\geq \frac{1}{k}$ and $\phi^{(3)}<\frac{1}{k}$ where $l+1\leq j_0<p\leq j_0+k$. 
            \end{enumerate}
        \end{enumerate}
    \end{lemma}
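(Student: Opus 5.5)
We plan to argue directly with the slope functions $\phi_j(p)$ from \eqref{phij} and $\phi_{(j,j')}(p)$ from \eqref{phijj'}, combined with Lemma \ref{cor-dumas}. Since $p\mid\Delta(n-k+1,k)$, Lemma \ref{ndivaj}$(ii)$ gives $p\nmid a_n$ and $p\mid a_j$ for $j\le n-k$. Hence, by Lemma \ref{fil1} (and its multi-edge refinement via Dumas), a factor of degree $k$ forces $\phi^{(1)}\ge 1/k$. Moreover, the total horizontal length of the edges with slope $\ge 1/k$ must be large enough to contain a sub-sum equal to $k$. The reason is that an edge of slope $<1/k$ with denominator $>k$ cannot contribute a lattice segment of length $\le k$.

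For $(i)$ we first get the lower bound. For $j<j_0$ we have $\nu_p(\Delta(a+1,j))=0$, so $\phi_j(p)\le 0$, while $\phi_{j_0}(p)>0$ because $p\ge 1+ck>k\ge$ the relevant carries. Thus the maximiser satisfies $r_1\ge j_0$.

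For the upper bound we use Legendre's formula (Lemma \ref{sigma}):
\[
\nu_p(\Delta(a+1,j))=\frac{j-\sigma_p(a+j)+\sigma_p(a)}{p-1}.
\]
Together with $\nu_p\binom nj\ge 0$, this bounds $\phi_j(p)\le \frac{1}{p-1}+\frac{\sigma_p(a)-\sigma_p(a+j)}{j(p-1)}$. Since $p\ge 1+ck$, we have $\frac1{p-1}\le\frac1{ck}$, so $\phi_j\ge 1/k$ requires
\[
\sigma_p(a)-\sigma_p(a+j)\ge \frac{(c-1)j}{ck}\,(p-1)\cdot\frac{1}{?}.
\]
More precisely, it requires $(c-1)j\le (p-1)(\sigma_p(a)-\sigma_p(a+j))/\,$appropriate constant. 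The digit-sum drop is at most $(p-1)c_0$, with $c_0$ from \eqref{c0 def} bounding the number of base-$p$ digits of numbers up to $a+p+p^2$. Comparing with the hypothesis $\frac{c-1}{p-1}(p^{v_0}+p)\ge c_0$ will show $\phi_j<1/k$ for $j\ge p^{v_0}+p$, except when $j$ lies in the block ending before $a+j$ reaches the next multiple of $p^{v_0+1}$. That next multiple occurs at $j=j_0+t_1p^{v_0}$. This yields $r_1<\max\{j_0+t_1p^{v_0},p^{v_0}+p\}$.

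For $(ii)$ we refine the analysis inside $[j_0,\max\{\cdot\})$. The extra hypothesis $(p-1)\nu_p(t_0+1)\le p^{v_0}(c-1)$ controls the valuation jump at $j=j_0+p^{v_0}$, the point where $a+j=p^{v_0}(t_0+1)$. With it we show that among $j>j_0$ only $j=p$ can compete with $j_0$ for the maximal slope. The multiples of $p$ in $a+1,\dots,a+j$ are $a+j_0,a+j_0+p,\dots$, each contributing valuation $1$ except $a+j_0$, which contributes $v_0$, and possibly later high powers, which are excluded by the hypothesis. Meanwhile $\nu_p\binom nj$ becomes positive exactly when $j$ crosses residues beyond $l$, by Lemma \ref{lucas} and $p^{u_0}\parallel(n-l)$. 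This is where the conditions $l+1\le j_0<p\le j_0+k$ come from: $\binom np$ has $p$-valuation below $\nu_p\binom n{j_0}$ only in that configuration.

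Then we use Dumas. The lengths $r_1$ and $r_2-r_1$ are $j_0$ or $p$, and any further edges have slope $<1/k$ with large denominators. A degree-$k$ factor must therefore be built from these pieces, which leaves exactly the three listed cases (a), (b), (c). Mutual exclusivity is immediate from the values of $r_1$ and the signs of $\phi^{(2)}$ and $\phi^{(3)}$.

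The main obstacle is the digit-sum estimate in $(i)$. It requires carefully tracking carries for $j$ in the range $[p,\,j_0+t_1p^{v_0})$ and matching them against $c_0$. I would first try to prove the inequality $\phi_j(p)\le \frac{v_0+\lfloor (j-j_0)/p\rfloor}{j}$ on that block, and then handle the single carry at $j_0+t_1p^{v_0}$ separately.
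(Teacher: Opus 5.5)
There is a genuine gap: the two steps that carry the proof are either left unfinished or argued incorrectly.

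\textbf{Part (i).} The decisive inequality is left as a placeholder. The bound you intend to use, that the digit-sum drop $\sigma_p(a)-\sigma_p(a+j)$ is at most $(p-1)c_0$, is false in general: for $p=7$, $a=237$ one has $c_0=2$ and $\sigma_7(a)=15$, and at $a+j=7^3$ the drop is $14>12$. Even granting it, the hypothesis $\frac{c-1}{p-1}(p^{v_0}+p)\ge c_0$ is not strict, so you would only reach $(p-1)c_0\ge\sigma_p(a)-\sigma_p(a+j)\ge(c-1)j\ge(p-1)c_0$, which is no contradiction.

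The missing idea is a bound that is strict and that ties any excess valuation to the size of $j$. The paper uses Erd\H{o}s's estimate $\nu_p(\Delta(a+1,j))\le v+\nu_p(i_1!)$, where $v$ is the largest $p$-adic valuation among $a+j_0,\dots,a+j$. This gives $v>\frac{c-1}{p-1}j\ge c_0$, hence $v\ge c_0+1$. On the other hand, $p^v\le a+j$ and \eqref{c0 def} force $j\ge p^{v-1}+p^2+p$, which is incompatible with $v>\frac{j}{p-1}$.

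In your carry language this reads as follows. Write $\nu_p(\Delta(a+1,j))=\nu_p(j!)+C$, with $C$ the number of carries in $a+j$. Strictness comes from $\sigma_p(j)\ge 1$ and yields $C\ge c_0+1$. That case is then excluded because $p^{C}\le a+j$ makes $j$ exponentially large in $C$.

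Also, your claim $\phi_{j_0}(p)>0$ need not hold; it fails if $l+1\le j_0<p$ and $v_0\le u_0$. The bound $r_1\ge j_0$ should instead come from $\phi^{(1)}\ge\frac1k>0\ge\phi_j(p)$ for $j<j_0$.

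\textbf{Part (ii).} The whole content is the claim that, for $j_0<j<\max\{j_0+t_1p^{v_0},p^{v_0}+p\}$, the relative slope $\phi_{(j,j_0)}(p)$ is $<\frac1k$ except in the configuration $l+1\le j_0<p\le j\le j_0+k$. You assert this without proof.

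Your description of the valuations is also wrong. In fact $\nu_p(a+j_0+i)=\nu_p(i)$ for $1\le i<t_1p^{v_0}$, so higher powers of $p$ do occur; they are absorbed via $\nu_p(i_2!)<\frac{i_2}{p-1}$. The hypothesis $(p-1)\nu_p(t_0+1)\le p^{v_0}(c-1)$ only controls the single term $a+j_0+p^{v_0}$ when $t_1=1$. For the same reason, your proposed bound $\phi_j(p)\le\frac{v_0+\lfloor (j-j_0)/p\rfloor}{j}$ fails once $j-j_0\ge p^2$: for $p=5$, $k=2$, $a=124$, $n=j=26$, all hypotheses hold and $\phi_{26}(5)=\frac{9}{26}>\frac{8}{26}$.

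What is needed is a case split on $j_0\le l$ (where $\nu_p\binom n{j_0}=0$) versus $j_0\ge l+1$, and on $j<j_0+t_1p^{v_0}$ versus $j\ge j_0+t_1p^{v_0}$. In the hardest case, use $\nu_p(\Delta(n-j+1,j-j_0))\ge\nu_p((j-j_0)!)$ and Legendre to reduce to $\sigma_p(j_0)>\sigma_p(j)$, which forces $j_0<p\le j\le j_0+k$. There the drop of $\nu_p\binom nj$ at $j=p$ singles out $j=p$. The $j$ with $p<j\le j_0+k$ are harmless because $\phi_{(j,p)}(p)\le 0$, but they are not excluded outright.

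Finally, the trichotomy (a)--(c) does not come from Dumas' theorem. It follows from $(i)$, this claim and convexity of the polygon; for example, $\phi^{(2)}=\phi_{(r_2,p)}<\phi_{(r_2,j_0)}$ when $r_1=p$. Dumas is only needed later, in Lemma \ref{v0gequ0+1}.
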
 
    \begin{proof}$(i).$
    Suppose $\mathcal{L}_n^{(a)}(x)$ has a factor of degree $k$. Let $p$  be a prime and $c\geq 2$ be a real number satisfying 
    \eqref{ck}. We consider $NP_p\left(\mathcal{L}_n^{(a)}(x)\right)$. From the definition of $j_0$, we observe that  $p\nmid\Delta(a+1,j)$ for $j<j_0$ and hence $\phi_j(p)\leq 0$ for $j<j_0$. Therefore $r_1\geq j_0$.  We prove that \begin{align}\label{equationj}
    \phi_j(p)<\frac{1}{k}
    \quad\text{for all}\quad
    j\geq \max\{j_0+t_1p^{v_0},\,p^{v_0}+p\}.
    \end{align} 
    Suppose, on the contrary, that there exists an integer $j$ satisfying $j\geq \max\{j_0+t_1p^{v_0},\,p^{v_0}+p\}$ such that  $\phi_j(p)\geq \frac{1}{k}.$
    Since $j\geq \max\{j_0+t_1p^{v_0},\,p^{v_0}+p\}$, we have   $(a+j_0+t_1p^{v_0})\mid\Delta(a+1, j)$. Moreover, since $a+j_0=p^{v_0}t_0 $ and $t_0+t_1\equiv 0\pmod p,$ we get $a+j_0+t_1p^{v_0}=p^{v_0}(t_0+t_1),$ and therefore $\nu_p(a+j_0+t_1p^{v_0})>v_0.$ Write $j=j_0+i_1$ where $i_1 \ge t_1p^{v_0}$. Choose $i_p$ with $1 \leq i_p \leq i_1,$ such that $$v:=\nu_p(a+j_0+i_{p})=\max_{0 \le i \le i_1}\{\nu_p(a+j_0+i)\}.$$  Thus, $v>v_0\geq 1$, and hence $v\geq 2$. Since $p^v \mid (a+j_0+i_p),$ we have $j =j_0+i_1\geq j_0+i_p\ge p^v-a$. Using \eqref{c0 def}, we obtain $j\ge p^v+p+p^2-(p-1)p^{c_0}.$ We apply the combinatorial argument of Erd\H{o}s to bound the power of $p$ in a product of consecutive integers to get $\nu_p(\Delta(a+1, j) =\nu_p(\Delta(a+j_0, i_1+1) \leq v+ \nu_p(i_1!).$ Thus, $
  \frac{1}{k} \leq   \phi_j(p)
    \leq  \frac{v+\nu_p(i_1!)}{j} < \frac{v}{j}+\frac{\nu_p(i_1!)}{i_1}<\frac{v}{j}+\frac{1}{p-1}.$
     \noindent  This together with \eqref{ck} gives 
      \begin{align}\label{c-1v}
      v>\left(\frac{1}{k}-\frac{1}{p-1}\right)j\geq 
      \left(\frac{c-1}{p-1}\right)j.
      \end{align} 
      Since $j\geq p^{v_0}+p$, this implies
    $v > \frac{c-1}{p-1}(p^{v_0}+p)\ge c_0$. Furthermore, since
    $c\geq 2$, $v\geq c_0+1$ and $j\ge p^v+p+p^2-(p-1)p^{c_0}$, it follows from \eqref{c-1v} that \begin{equation*}
        v > \frac{p^v+p+p^2-(p-1)p^{c_0}}{p-1}\geq \frac{p^v+p+p^2-(p-1)p^{v-1}}{p-1}\ge \frac{p^{v-1}+p^2+p}{p-1},
    \end{equation*} 
    which is impossible, since $p \ge 3$ and $v \ge 2$. Thus \eqref{equationj} holds. By Lemma \ref{fil1}, we have $\phi^{(1)}=\phi_{r_1}(p)\geq \frac{1}{k}$. 
    Therefore \eqref{equationj} implies that $r_1$ cannot satisfy $r_1\geq \max\{j_0+t_1p^{v_0},\,p^{v_0}+p\}$. Hence, $r_1 < \max\{j_0+t_1p^{v_0},\,p^{v_0}+p\}.$ Since $j_0 \leq r_1$,  we obtain $j_0\leq r_1<\max\{j_0+t_1p^{v_0}, p^{v_0}+p\}$. This proves $(i)$. 

\noindent
$(ii)$. We begin with the following claim.
    \begin{claim}\label{psij} 
    $\phi_{(j, j_0)}(p)<\frac{1}{k}$ for $j_0<j<\max\{j_0+t_1p^{v_0}, p^{v_0}+p\}$ except when $l+1\leq j_0<p\leq j_0+k$ and $j=p$.
    \end{claim}
    
   \noindent \textit{Proof of Claim \ref{psij}:} 
    Let $j=j_0+i_2$ for some 
    $i_2\geq 1$. 
     From \eqref{phijj'}, we have 
        \begin{align*}\label{n-j0}
          \phi_{(j, j_0)}(p)&=\frac{1}{i_2}\nu_p\left(\frac{\binom{n}{j_0}\Delta(a+j_0+1, i_2)}{\binom{n}{j}}\right)
          =\frac{1}{i_2}\nu_p\left(\frac{\Delta(a+j_0+1, i_2)j!}{\Delta(n-j_0-i_2+1, i_2)j_0!}\right).\numberthis
          \end{align*}
    \textbf{Case I: } $j_0\leq l.$
    Then in \eqref{n-j0}, $\nu_p\left(\binom {n}{j_0}\right)=0$ as $n-l< n-j_0+1$ and $p\nmid \Delta(n-j_0+1, j_0)$ and hence \begin{align}\label{l<=j0}
            \phi_{(j, j_0)}(p)\leq \frac{1}{i_2}\nu_p(\Delta(a+j_0+1, i_2)).
        \end{align}
        Let $j<j_0+t_1p^{v_0}$. 
        For $1 \leq i \leq i_2$, since $i_2 <t_1p^{v_0},$ we have $ i <t_1p^{v_0}< p^{v_0+1}.$ If $\nu_p(i) < v_0,$ then $\nu_p(a+j_0+i)=\nu_p(i).$ If $\nu_p(i)=v_0,$ then $i=\tilde{t}p^{v_0}$ for some integer $\tilde{t}$ with $1 \leq \tilde{t} <t_1$. Since \(t_1\) is the least positive integer satisfying \(t_0+t_1\equiv 0\pmod p\), we have \(p\nmid(t_0+\tilde{t})\). Therefore, $\nu_p(a+j_0+i)=v_0=\nu_p(i).$ Thus, 
        $\nu_p(a+j_0+i)=\nu_p(i), \ \text{for all}\ 1\leq i\leq i_2$ and
       \begin{align}\label{samevaluation}
       \nu_p(\Delta(a+j_0+1,i_2))=\nu_p(i_2!).
        \end{align}    
        Then by \eqref{l<=j0},we obtain $\phi_{(j, j_0)}(p)\leq\frac{\nu_p(i_2!)}{i_2}<\frac{1}{p-1}<\frac{1}{k}.$
        
        Let $ j \geq j_0+t_1p^{v_0}$. Since $j<\max\{j_0+t_1p^{v_0}, p^{v_0}+p\}$  this case is possible only if $ j_0+t_1p^{v_0}< p^{v_0}+p$. Hence, $\max\{j_0+t_1p^{v_0}, p^{v_0}+p\}=p^{v_0}+p,$ and therefore, $ j < p^{v_0}+p$. Thus, $j_0+t_1p^{v_0} \leq j=j_0+i_2<p^{v_0}+p$. This gives 
        $t_1=1$ and $p^{v_0}\leq i_2< p^{v_0}+(p-j_0)$. Let $\alpha=\nu_p(t_0+1)$. We have $\nu_p(a+j_0+p^{v_0})=v_0+\alpha$ since $a+j_0=p^{v_0}t_0$. Also, for \(1\leq i\leq i_2\), the maximum value of $\nu_p(a+j_0+i)$ is attained at \(i=p^{v_0}\). Therefore, $\nu_p(\Delta(a+j_0+1, i_2))\leq \nu_p(a+j_0+p^{v_0})+\nu_p((p^{v_0}-1)!)=\alpha+\nu_p(p^{v_0}!).$ Thus by \eqref{l<=j0}, Lemma \ref{sigma} and using 
          $(p-1)\alpha\leq p^{v_0}(c-1)$,  we obtain 
          \begin{align*}
            \phi_{(j, j_0)}(p)&\leq\frac{\alpha+\nu_p(p^{v_0}!)}{i_2}
            < \frac{\alpha}{p^{v_0}}+\frac{p^{v_0}}{(p-1)p^{v_0}}\leq \frac{\alpha(p-1)}{(p-1)p^{v_0}}+\frac{1}{p-1} \leq\frac{c}{p-1}\leq\frac{1}{k}.
        \end{align*}
        \textbf{Case II: } $j_0\geq l+1$. We proceed similarly to Case I. Let 
        $j\geq j_0+t_1p^{v_0}$. Then we may assume that $j_0+t_1p^{v_0}\leq j=j_0+i_2<p^{v_0}+p$ implying 
        $t_1=1, j_0<p$ and $p^{v_0}\leq i_2< p^{v_0}+p$. This imply $\nu_p(j!)=\nu_p(i_2!)=\nu_p(p^{v_0}!)$ and 
       $\nu_p(\Delta(a+j_0+1,i_2))=\nu_p(\Delta(a+j_0+1, p^{v_0}))\leq \nu_p((a+j_0+p^{v_0}))+\nu_p((p^{v_0}-1)!)\leq \nu_p(t_0+1)+v_0+\nu_p((p^{v_0}-1)!)=\alpha+\nu_p((p^{v_0})!).$ These together with \eqref{n-j0} and $ \nu_p(\Delta(n-j_0-i_2+1, i_2))\ge \nu_p(i_2!)$ gives 
    \begin{align*}
          \phi_{(j, j_0)}(p)&\leq \frac{1}{i_2}\left(\nu_p(\Delta(a+j_0+1, p^{v_0}))\right)
          \leq \frac{\alpha+\nu_p(p^{v_0}!)}{p^{v_0}}
          <\frac{c-1}{p-1}+\frac{1}{p-1}\leq\frac{1}{k},
      \end{align*} 
      since $\alpha(p-1) \leq p^{v_0}(c-1)$ and $p\geq 1+ck$.
        
    Let $j< j_0+t_1p^{v_0}$. In this case, we again have $i_2 < t_1p^{v_0}<p^{v_0+1}$ and hence \eqref{samevaluation} holds. Using this, along with $ \nu_p(\Delta(n-j_0-i_2+1, i_2))\ge \nu_p(i_2!)$ and applying Lemma \ref{sigma} to \eqref{n-j0}, we obtain 
     \begin{align*}\label{sigmaj0-sigmaj}
         \phi_{(j, j_0)}(p)&\le \frac{\nu_p(j!)-\nu_p(j_0!)}{i_2}
         \leq \frac{i_2+\sigma_p(j_0)-\sigma_p(j)}{(p-1)i_2}
         =\frac{1}{p-1}+\frac{\sigma_p(j_0)-\sigma_p(j)}{i_2(p-1)}.\numberthis
     \end{align*}
    Since $p-1\geq ck\geq 2k$, when $i_2\geq p-1$, $\phi_{(j,j_0)}(p)\leq \frac{\nu_p(j!)}{i_2}\leq \frac{j-\sigma_p(j)}{i_2(p-1)}<\frac{2}{p-1} <\frac{1}{k}.$ Hence, we may assume that \(\sigma_p(j_0)>\sigma_p(j)\), \(i_2\leq p-2\) and \(j_0<p\leq j\).
    Writing $j=p+(j_0+i_2-p)$, we have $\sigma_p(j)=1+(j_0+i_2-p)$ implying  $\phi_{(j, j_0)}(p)\leq \frac{1}{i_2}<\frac{1}{k} \ \text{if} \ i_2>k.$ Thus, we may also suppose that $i_2\leq k$. Thus $p\leq j=j_0+i_2\leq j_0+k$.  If $j\neq p$, then $j=p+h$ for some $1\leq h\leq j_0+k-p<k$. Clearly $\phi_{(j,j_0)}(p)\leq \phi_{(j,j-1)}(p)=\phi_{(p+h,p+h-1)}(p)$. Then  $a+j_0<a+p+h\leq a+j_0+k+1<a+j_0+p$ and \eqref{phijj'} gives  $\phi_{(j,j_0)}(p)\leq\nu_p(p+h)+\nu_p(a+p+h)=0,$ and this proves Claim \ref{psij}.

    We now complete the proof of \((ii)\). From part \((i)\), we know that $j_0\leq r_1<\max\{j_0+t_1p^{v_0},\,p^{v_0}+p\}.$ Moreover, Claim \ref{psij}, together with the inequality $\phi_j(p)\leq \phi_{(j,j_0)}(p),$ shows that $\phi_j(p)<\frac{1}{k}$ for every integer \(j\) satisfying $j_0<j<\max\{j_0+t_1p^{v_0},\,p^{v_0}+p\},$ except possibly when $j=p$ and $l+1\leq j_0<p\leq j_0+k.$ On the other hand, by Lemma \ref{fil1}, under the assumption that \(\mathcal{L}_n^{(a)}(x)\) has a factor of degree \(k\), we may assume that $\phi^{(1)}=\phi_{r_1}(p)\geq \frac{1}{k}.$ Hence either $r_1=j_0,$ or $r_1=p\quad\text{with}\quad l+1\leq j_0<p\leq j_0+k.$ First suppose that $r_1=p.$ Then necessarily $l+1\leq j_0<p\leq j_0+k$ by Claim \ref{psij}. As $r_1=p$, $r_2\neq p$ and as $\phi^{(2)}=\phi_{(r_2, p)}<\phi_{(r_2,j_0)}$, again by Claim \ref{psij}, we obtain $\phi^{(2)}<\frac{1}{k}.$ Thus alternative \((b)\) holds. It remains to consider the case $r_1=j_0.$ If $\phi^{(2)}<\frac{1}{k},$ then alternative \((a)\) holds. Hence, we may assume that $\phi^{(2)}\geq \frac{1}{k}.$ Since \(n-r_2\) denote the second vertex position from the right of $NP_p(\mathcal{L}_n^{(\alpha)}(x))$, we have $ r_2>r_1=j_0$. Moreover, $\frac{1}{k} \leq \phi^{(2)}=\phi_{(r_2, j_0)}<\phi_{r_2}$. Thus by \eqref{equationj}, we have $j_0<r_2<\max\{j_0+t_1p^{v_0},\,p^{v_0}+p\}.$ Applying Claim \ref{psij} with \(j=r_2\), and using $\phi^{(2)}=\phi_{(r_2,j_0)}(p)\geq \frac{1}{k},$ we get $r_2=p$ and $l+1\leq j_0<p\leq j_0+k.$ Finally, applying Claim \ref{psij} once more to the possible vertices following \(r_2=p\), we obtain $\phi^{(3)}<\frac{1}{k}.$ Therefore alternative \((c)\) holds. This completes the proof of \((ii)\).
\end{proof}
    \begin{note}
    \phantomsection
    \label{remark}
       \noindent 
       $(i)$ We have $t_0=\frac{a+j_0}{p^{v_0}}<\frac{(p-1)p^{c_0}-p-p^2+p}{p^{v_0}}< (p-1)p^{c_0-v_0}$ giving $t_0+1\leq (p-1)p^{c_0-v_0}<p^{c_0-v_0+1}$. Hence  $\alpha=\nu_p(t_0+1)\leq c_0-v_0.$
       
        \noindent 
       $(ii)$ If $r_1=j_0$  and $j_0\leq l$, then by Lemma \ref{lucas}, 
       $\phi^{(1)}=\frac{v_0}{j_0}$.\\         
        \noindent 
       $(iii)$ If $r_1=j_0$, $j_0\neq p$ and $j_0\geq l+1$, then $    \nu_p\left(\binom{n}{j_0}\right)=u_0.$ Thus,   $\phi^{(1)}=
        \begin{cases}
            \frac{v_0-u_0}{j_0},&\text{if } j_0\neq p,\\
            \frac{v_0-u_0+1}{j_0},&\text{if } j_0=p.
        \end{cases}$
    \end{note}

    \begin{lemma}\label{v0gequ0+1}
        Let $50<a\leq 1000$, $1\leq k\leq\frac n2$ and suppose that $\mathcal L_n^{(a)}(x)$ has a factor of degree $k$. If 
        $p$ is a prime satisfying \eqref{ck} and $(p-1)\nu_p(t_0+1)\leq p^{v_0}(c-1)$, then
        $j_0\geq l+1$ and  $v_0\geq u_0+1$. 
        
        Further, $r_1=j_0$, except for the case  $k=1$, $p=3$, $j_0=2$ and $v_0-u_0=2$. In this exceptional case, $r_1=p$. If $r_2\neq p$, and if $\phi^{(1)}=\frac{r}{s}$, $(r,s)=1$, then $k\in\{\beta s, \beta s+1\ldots, \beta s+l\}$ for some integer $\beta$ satisfying $1\leq \beta\leq \frac{r_1}{s}$.
        \end{lemma}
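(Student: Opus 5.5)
We will work throughout with the prime $p$ fixed as in the hypothesis. Since $p$ satisfies \eqref{ck} and $(p-1)\nu_p(t_0+1)\leq p^{v_0}(c-1)$, Lemma \ref{slope1}$(ii)$ applies. So exactly one of the alternatives (a), (b), (c) holds. In (b) and (c) we already have $j_0\geq l+1$, and in all three cases every edge except the first one (or the first two in (c)) has slope $<\frac1k$. The tool for excluding factors will be Lemma \ref{cor-dumas} combined with the following remark. An edge of slope $\frac{r'}{s'}$ in lowest terms has lattice steps of length $s'$. A positive slope $<\frac1k$ forces $s'>k$, so such an edge cannot contribute to a factor of degree $k$. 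Only slope-$0$ pieces are harmless.

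For slope-$0$ pieces we use the valuations of the coefficients. By Lemma \ref{ndivaj}$(ii)$, $p\mid a_j$ for $j\leq n-k$. More precisely, since $p\mid (n-l)$, we have $p\mid a_j$ for all $j\leq n-l-1$, while $a_n,\dots,a_{n-l}$ may be prime to $p$. Hence the only slope-$0$ portion of $NP_p$ lies at the left end and has horizontal length at most $l$. Consequently a factor of degree $k$ must take the form ($\beta$ lattice steps of the first edge) $+$ (at most $l$ from the horizontal part).

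\emph{Step 1: $j_0\geq l+1$.} Suppose $j_0\leq l$. Then only alternative (a) is possible, so $r_1=j_0$, $\phi^{(2)}<\frac1k$ and, by Remark \ref{remark}$(ii)$, $\phi^{(1)}=\frac{v_0}{j_0}$. The factor of degree $k$ would then have degree at most $r_1+l$, formed from pieces of the first edge and the horizontal left part. I will instead exploit the fact that the degree-$k$ factor must use at least one piece of the first edge. Combining $r_1=j_0\leq l\leq k-1$ with the positions of the nonzero-slope points, i.e. with $p\mid a_j$ for $j\leq n-k$, this forces the vertex data to be inconsistent. I expect this to need only bookkeeping of which lattice points lie on $NP_p$.

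\emph{Step 2: $v_0\geq u_0+1$.} If $r_1=j_0\neq p$, Remark \ref{remark}$(iii)$ gives $\phi^{(1)}=\frac{v_0-u_0}{j_0}\geq\frac1k>0$, so $v_0>u_0$. If $j_0=p$, then $\phi^{(1)}=\frac{v_0-u_0+1}{p}$, and $v_0=u_0$ would give $\phi^{(1)}=\frac1p<\frac1k$ because $p\geq 1+ck>k$. Hence again $v_0\geq u_0+1$.

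If instead $r_1=p>j_0$, we use $\phi^{(1)}=\phi_p(p)\geq\frac1k$. Here $\nu_p(\Delta(a+1,p))=v_0$, since $j_0$ gives the unique multiple of $p$ in $a+1,\dots,a+p$. By Kummer, $\nu_p\binom np\geq u_0-1$. Together these give $v_0-u_0+1\geq \frac pk>c\geq 2$, which is even stronger.

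\emph{Step 3: $r_1=j_0$ except in the stated case.} This is the main obstacle. Suppose $r_1=p$, i.e. alternative (b). For $p$ to be a vertex beyond $j_0$ we need $\phi_{(p,j_0)}(p)\geq\phi^{(1)}\geq\frac1k$. By \eqref{phijj'}, the numerator of $\phi_{(p,j_0)}(p)$ is at most $\nu_p(p!)=1$, so $p-j_0\leq k$ and in fact $\phi^{(1)}\leq \frac{1}{p-j_0}$. Comparing this upper bound with the lower bound $\phi^{(1)}=\frac{v_0-\nu_p\binom np}{p}\geq \frac{v_0-u_0+1}{p}\cdot(\text{adjusted})$, and using $p\geq 1+ck$, $v_0\geq u_0+1$, we should obtain $p\leq$ a small bound in $k$. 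The residual cases will then be settled using $50<a\leq1000$ together with \eqref{c0 def}. I expect these constraints to force $k=1$, $p=3$, $j_0=2$ and $v_0-u_0=2$. I will first try to force the slope equality $\frac{v_0-u_0+1}{p}\leq\frac1{p-j_0}$ to reduce to $p-j_0=1$, and then handle $k=1$ directly.

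\emph{Step 4: the form of $k$.} If $r_2\neq p$, we are in (a) or (b), so only the first edge has slope $\geq\frac1k$. Writing $\phi^{(1)}=\frac rs$ with $(r,s)=1$, its lattice steps have length $s$, and there are $\frac{r_1}{s}$ of them. By Lemma \ref{cor-dumas} and the preliminary remark, a factor of degree $k$ uses $\beta\geq1$ of these steps together with at most $l$ units from the horizontal left part. Hence $k\in\{\beta s,\dots,\beta s+l\}$ with $1\leq\beta\leq\frac{r_1}{s}$.
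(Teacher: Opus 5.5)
Your Steps 2 and 4 are fine; Step 4 is essentially the paper's closing argument. Steps 1 and 3, however, each have a genuine gap.

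\textbf{Step 1.} With $j_0\le l$ you are in alternative (a), and Dumas gives $k\le r_1+h$, where $h$ is the length of the horizontal piece. Your bound $h\le l$ only yields $k\le j_0+l$, which is not a contradiction. The bound $h\le k-1$ coming from $p\mid a_j$, $j\le n-k$, is weaker still. No amount of bookkeeping with these facts closes the step. The missing observation is that $p\mid (n-l)+(a+j_0)=n+a-(l-j_0)$. Since $a_{n-i}=\binom ni (n+a)(n+a-1)\cdots(n+a-i+1)$, every $a_{n-i}$ with $i\ge l-j_0+1$ is divisible by $p$. Hence $h\le l-j_0$, and Dumas gives $k\le (l-j_0)+j_0=l<k$.

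\textbf{Step 3.} Slope comparisons alone cannot force the exceptional case. Even combined with $p\ge 1+ck$, $v_0\ge u_0+1$, $50<a\le 1000$ and the definition of $c_0$, they still allow $r_1=p\ne j_0$. Take $(n,a,k,p)=(14,338,3,7)$. Here $a+5=7^3$, so $j_0=5$, $v_0=3$, $l=0$, $u_0=1$ and $c=2$, and all hypotheses of the lemma other than the existence of the factor hold. The polygon $NP_7\bigl(\mathcal L_{14}^{(338)}(x)\bigr)$ has vertices $(0,0),(7,1),(14,4)$. Thus $r_1=7=p$, $\phi^{(1)}=\frac37\ge\frac13$ and $\phi^{(2)}=\frac17<\frac13$. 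Moreover $p-j_0=2$, so your planned reduction to $p-j_0=1$ fails.

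What excludes such configurations is the assumed degree-$k$ factor, via the lattice-point argument you only use in Step 4; in alternative (b) one has $r_2\ne p$, so that argument applies. Suppose $\gcd(v_0-u_0+1,p)=1$. Then the first edge has no interior lattice points and its single step has length $p>k$. All other sloped edges have steps longer than $k$, so $k$ would have to come from the horizontal piece, of length at most $l<k$. Hence $p\mid v_0-u_0+1$.

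For $p\ge5$, $p^{v_0}\le a+p<p^5$ gives $v_0-u_0+1\le 4<p$, a contradiction. So $p=3$, and $p\ge 1+2k$ forces $k=1$. Then $v_0-u_0\in\{2,5\}$ and $j_0\le 2$. Comparing $\phi_{j_0}(3)$ with $\phi_3(3)=\frac{v_0-u_0+1}{3}$ leaves only $j_0=2$, $v_0-u_0=2$, which is the stated exception.
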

    \begin{proof}
    Suppose, on the contrary, that  $j_0\leq l$. Then, by Lemma \ref{slope1}$(ii)$, only alternative $(a)$ can occur. Hence  $r_1=j_0$ and $\phi^{(2)}<\frac{1}{k}$.
    By Lemma \ref{cor-dumas}, the degree $k$ can be written as $k=\sum_{i=1}^{t}\epsilon_ib_i,$ where $b_i=\Tilde{x}_i-\Tilde{x}_{i-1},\epsilon_i\in\{0,1\}$ and $\Tilde{x_i}$ are the $x$-coordinates of  lattice points in $NP\left(\mathcal L_n^{(a)}(x)\right)$ for $1 \le i \le t$. Since $p\mid(n-l)$,  and $p\mid(a+j_0)$, we have $p\mid(n+a)-(l-j_0)$. Thus, $\nu_p((n+a)\cdots(n+a-(l-j_0))=\nu_p(n+a-l+j_0)$, since $l-j_0<p$. Therefore $
    \nu_p(a_{n-j})=0$ for $j\leq l-j_0$ and $\nu_p(a_{n-j}) >0$ for $j=l-j_0+1.$ Thus, the length of the edge with zero slope in $NP_p\left(\mathcal L_n^{(a)}(x)\right)$ is $l-j_0$. We observe that the distance between $x$- coordinates of any two lattice points on an edge with positive slope $<\frac{1}{k}$ is greater than $k$. Since we also have $\phi^{(2)} < \frac{1}{k}$, it follows from the expression of $k$, $\epsilon_i\neq 0$ only possibly for those $b_i'$s coming from the rightmost edge or from the edge with zero slope. Using the fact that the length of the edge with zero slope is $l-j_0,$ and that the length of the right-most edge is $r_1=j_0$, we obtain $k\leq l-j_0+j_0=l<k$, which is a contradiction. Hence, we can conclude that $j_0\geq l+1$. If $v_0\leq u_0$, then by Remark \ref{remark} $(iii)$ and $(iv)$,  $\phi^{(1)}=\phi_{r_1}(p)\leq 0$ if $r_1=j_0$ and $\phi^{(1)}\leq \frac{1}{p}$ if $r_1=p$.    
    Hence in either case $\phi^{(1)}<\frac{1}{k}$ which contradicts Lemma \ref{fil1}. Therefore $v_0\geq u_0+1$.  

    By Lemma \ref{slope1}, $r_1=j_0$ or $r_1=p$. Suppose that $r_1=p$, $p\neq j_0$. Then by Lemma \ref{slope1}$(ii)$,  $\phi^{(2)}<\frac{1}{k}$. Here $\phi^{(1)}=\frac{v_0-u_0+1}{p}$. 
    Assume that $(v_0-u_0+1,p)=1$. Then the only lattice points in the right-most edge are the endpoints, so that the distance between $ x$-coordinates of the lattice points of the rightmost edge is $p>k$. Also, the distance between $ x$-coordinates of any two lattice points on an edge with positive slope $<\frac{1}{k}$ is greater than $k$. Therefore, in the expression of $k$, we have $\epsilon_i\neq 0$ only possibly for those $b_i'$s coming from the edge with zero slope, so that $k$ is at most equal to the length of the edge of zero slope. 
    From $p\mid(n-l)$, we have $\nu_p(a_{n-j})>0$ for $j>l$. Hence, the length of the edge with zero slope of $NP_p\left(\mathcal L_n^{(a)}(x)\right)$ is at most $l$. Since $l<k$, this gives a contradiction, and we may assume that $(v_0-u_0+1,p)>1$. For \(p\geq 5\), since $p^{v_0}\leq a+j_0\leq 1000+p<p^5,$ we have \(v_0\leq 4\). Together with \(u_0\geq 1\), this gives $v_0-u_0+1\leq 4<p,$ contradicting $(v_0-u_0+1,p)>1$. Therefore $p=3$ and $j_0\leq 2$. Since \(p\geq 1+ck\) and \(c\geq 2\), we get \(k=1\). Also \(v_0\leq 6\), and $(v_0-u_0+1,3)>1$ implies $v_0-u_0\in\{2,5\}.$ If $v_0-u_0=5$, then $\phi_p(p)=2$ and $\phi_{j_0}(p)\in\left\{5,\frac{5}{2}\right\}.$ On the other hand, if $v_0-u_0=2$ and $j_0=1$, then $\phi_p(p)=1\quad\text{and}\quad\phi_{j_0}(p)=2.$ Thus, in either situation, we have $\phi_{j_0}(p)>\phi_p(p),$ which implies $r_1=j_0$. The exceptional case occurs when $v_0-u_0=2$, $j_0=2$, $\phi_p(p)=1$ and $\phi_{j_0}(p)=1$, implying $r_1=p$.
 
     Finally, suppose that \(r_2\neq p\). Then by Lemma \ref{slope1}\((ii)\), we have $r_1=j_0\ \text{and}\ \phi^{(2)}<\frac{1}{k}.$ Let $\phi^{(1)} =  \frac{r}{s}$, the distance between any two lattice points on the right-most edge is a multiple of $s$. Hence by Lemma \ref{cor-dumas}, $k = \sum_{i=1}^{l} \epsilon_i + \beta s$, where $\beta s \leq r_1$.
    \end{proof}

    \begin{lemma}\label{x2neqp}
        Let $50<a\leq 1000$, $1\leq k\leq\frac n2$ and suppose that $\mathcal{L}_n^{(a)}(x)$ has a factor of degree $k$. If  $p\geq\max\{1+2k, 11\}$ is a prime divisor of $\Delta(n-k+1,k)$, then  
    $r_1=j_0, r_2\neq p, j_0\geq l+1, v_0=2, u_0=1,$ and $k\in\{j_0,j_0+1,\ldots,j_0+l\}.$
    \end{lemma}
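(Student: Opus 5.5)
Our plan is to apply Lemma \ref{v0gequ0+1} with $c=2$, which requires checking that \eqref{ck} and the condition $(p-1)\nu_p(t_0+1)\leq p^{v_0}$ both hold. Since $p\geq 1+2k$ and $p\mid\Delta(n-k+1,k)$, the first two parts of \eqref{ck} hold with $c=2$. For the third part we need $\frac{p^{v_0}+p}{p-1}\geq c_0$. Because $p\geq 11$ and $a\leq 1000$, we have $a+p+p^2\leq 1000+p+p^2$. This is at most $(p-1)p^2$ for $p\geq 11$ (indeed $10\cdot 121=1210\geq 1000+132$), so $c_0\leq 2$; for $p\geq 37$ or so even $c_0\le 2$ persists and possibly $c_0=1$ for huge $p$. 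On the other side $\frac{p^{v_0}+p}{p-1}>\frac{p+p}{p-1}>2\geq c_0$, since $v_0\geq 1$. For the second condition, Remark \ref{remark}$(i)$ gives $\nu_p(t_0+1)\leq c_0-v_0\leq 1$. Hence $(p-1)\nu_p(t_0+1)\leq p-1<p^{v_0}$. So Lemma \ref{v0gequ0+1} applies, and it yields $j_0\geq l+1$ and $v_0\geq u_0+1$. The exceptional case there requires $p=3$, which is excluded here, so $r_1=j_0$.

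Next we pin down $v_0$ and $u_0$. Since $u_0\geq 1$, we get $v_0\geq 2$. Also $p^{v_0}\mid a+j_0\leq 1000+p$, and $c_0\le 2$ gives $p^{v_0}\le a+j_0<(p-1)p^{c_0}\le (p-1)p^2$; more precisely, $p^3>1000+p$ for $p\geq 11$, so $v_0\leq 2$. Hence $v_0=2$ and $u_0=1$. We then get $\phi^{(1)}=\frac{v_0-u_0}{j_0}=\frac{1}{j_0}$ by Remark \ref{remark}$(iii)$, using $j_0\neq p$. Note that $j_0=p$ would force $p\mid a$, but we still need $p^2\mid a+p$, which is impossible when $p\mid a$ and $v_0=2$ unless $p\nmid a/p+1$; we should handle $j_0=p$ separately, where the slope is $\frac{2}{p}<\frac1k$ and contradicts Lemma \ref{fil1}.

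Then we show $r_2\neq p$. By Lemma \ref{slope1}$(ii)$, $r_2=p$ only in alternative (c), where $\phi^{(2)}=\phi_{(p,j_0)}(p)\geq\frac1k$. We compute $\phi_{(p,j_0)}(p)$ via \eqref{n-j0}. The numerator valuation is $\nu_p(p!)-\nu_p\binom{\cdot}{\cdot}$-type; since no term of $a+j_0+1,\dots,a+p$ is divisible by $p$ except possibly one with valuation at least $1$, we get $\nu_p(\Delta(a+j_0+1,p-j_0))\le 1$ (that term is $a+j_0+p$, outside the range). The $j!/j_0!$ contribution is $1$, while the $n$-block removes at least $u_0=1$. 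This gives slope at most $\frac{1}{p-j_0}$. Because $p\le j_0+k$ is required, we would need a finer count, and I expect this to be the main obstacle: we must show $\phi^{(2)}<\frac1k$ from $\nu_p(\Delta(n-p+1,p-j_0))\geq 1$, which holds since $n-l$ lies in that block when $j_0\geq l+1$. That makes the slope $0$.

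Finally, with $r_2\neq p$ and $\phi^{(1)}=\frac1{j_0}$, we have $s=j_0$ and $r_1=j_0$, so $\beta=1$. The last part of Lemma \ref{v0gequ0+1} then gives $k\in\{j_0,\dots,j_0+l\}$.
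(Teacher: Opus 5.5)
Most of your proposal follows the paper. You check the hypotheses of Lemma \ref{v0gequ0+1} with $c=2$ (via $c_0\le 2$ and Remark \ref{remark}$(i)$), deduce $j_0\ge l+1$, $r_1=j_0$, $v_0=2$, $u_0=1$, and finish with the Dumas step using $s=j_0$, $\beta=1$; all of this matches. Your separate treatment of $j_0=p$, where $\phi^{(1)}=\frac{2}{p}<\frac1k$, is correct and covers a case the paper leaves implicit. Two small remarks there: the sentence about $p\mid a$ is garbled and unnecessary, and $c_0=1$ never occurs, since $(p-1)p<a+p+p^2$.

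\textbf{The gap is in the step $r_2\neq p$.} You claim $\nu_p(\Delta(n-p+1,p-j_0))\ge 1$ because $n-l$ lies in that block. It does not.

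\begin{itemize}
\item The block is $(n-p+1)\cdots(n-j_0)$. Its largest term $n-j_0$ is smaller than $n-l$ precisely because $j_0\ge l+1$, and its smallest term $n-p+1$ exceeds $n-l-p$. So it contains no multiple of $p$.
\item Likewise $(a+j_0+1)\cdots(a+p)$ contains no multiple of $p$, since the next one after $a+j_0$ is $a+j_0+p$.
\item Finally, $\nu_p(p!/j_0!)=1$.
\end{itemize}

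Hence, by \eqref{n-j0}, $\phi_{(p,j_0)}(p)=\frac{1}{p-j_0}$ exactly. This is $\ge\frac1k$ whenever $p\le j_0+k$, which is exactly the exceptional case singled out in Claim \ref{psij}. So no computation of $\phi^{(2)}$ alone can exclude alternative (c) of Lemma \ref{slope1}$(ii)$.

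\textbf{The fix.} The contradiction has to come from the right-most edge, as in the paper. In alternative (c) you have $p\le j_0+k$, so $j_0\ge p-k\ge k+1$. Then $\phi^{(1)}=\frac{1}{j_0}<\frac1k$, contradicting Lemma \ref{fil1}. Equivalently, $\phi^{(1)}=\frac1{j_0}\ge\frac1k$ gives $j_0\le k$, hence $p\le j_0+k\le 2k$, against $p\ge 2k+1$. You already had every ingredient for this; the step just needs to use $\phi^{(1)}$ rather than $\phi^{(2)}$.
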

    \begin{proof}
        Since $1000+p+p^2<(p-1)p^2$ for $p\geq 11$ and since
        $50<a\leq 1000$, we have $c_0=2$ by \eqref{c0 def}. Further, by Remark \ref{remark}$(i)$, $0\leq \nu_p(t_0+1)\leq 2-v_0\leq 1$. Hence 
        $v_0\leq 2$, and the conditions of Lemma \ref{v0gequ0+1} are satisfied with $c=2$. Therefore, by Lemma \ref{v0gequ0+1}, we obtain $j_0\geq l+1$, $v_0\geq u_0+1$ and $r_1=j_0$. Since $u_0\geq 1$, it follows that  $v_0=2$ and $u_0=1$. Suppose now that $r_2=p$. Then, by Lemma \ref{slope1}$(ii)$, we have  $l+1\leq j_0<p\leq j_0+k$. In particular, $j_0\geq p-k$. Since $r_1=j_0$, the rightmost edge has slope $
    \phi^{(1)}=\frac{v_0-u_0}{j_0}=\frac{1}{j_0}<\frac{1}{k}$ since $j_0\geq p-k$ and $p\geq 1+2k$. 
    Hence $r_2\neq p$. Further from Lemma \ref{v0gequ0+1} and $\phi^{(1)}=\frac{v_0-u_0}{j_0}=\frac{1}{j_0}$,  we obtain $k\in\{j_0, j_0+1,\ldots, j_0+l\}$.
    \end{proof}

\section{\texorpdfstring{Proof of Theorem \ref{main} with $k\geq 2$}{Proof of Theorem \ref{main} with k>= 2}} 
    By Lemma \ref{previous}$(ii)$, we may assume that $\max\{5k, 50\}<a\leq 1000$.  In particular, $k<200$. Thus, in this section, we assume that $n\geq 2k,\quad
        k<200,\quad
        \max\{5k, 50\}< a\leq 1000.$ 

    \begin{lemma}\label{pmax1+2k}
    Let $k\geq 1$, $n\geq 2k$ and $50<a\leq 1000$. Let $p\geq \max\{1+2k, 37\}$ be a prime such that $p \mid \Delta(n-k+1,k)$. Then $\mathcal{L}_n^{(a)}(x)$ has no factor of degree $k$.
    \end{lemma}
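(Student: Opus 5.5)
Assume for contradiction that $\mathcal L_n^{(a)}(x)$ has a factor of degree $k$. Since $p\geq\max\{1+2k,37\}\geq\max\{1+2k,11\}$ and $p\mid\Delta(n-k+1,k)$, Lemma \ref{x2neqp} applies. It gives
\[
r_1=j_0,\quad r_2\neq p,\quad j_0\geq l+1,\quad v_0=2,\quad u_0=1,\quad k\in\{j_0,\dots,j_0+l\}.
\]
So $p^2\parallel(a+j_0)$ with $1\le j_0\le p$, and $p\parallel(n-l)$. The plan is to show that $v_0=2$ is impossible for $p\geq 37$ in the range $50<a\leq1000$.

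The first step is to bound $j_0$. We have $k\geq j_0\geq l+1\geq 1$, and since $p\geq 1+2k$ we get $j_0\leq k\leq (p-1)/2$.

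Next, $p^2\mid a+j_0$ forces $p^2\leq a+j_0\leq 1000+(p-1)/2$. For $p\geq 37$ we have $p^2\geq 1369>1000+18$, which is already a contradiction. More generally, $p^2-(p-1)/2>1000$ whenever $p\geq 37$ (at $p=37$ this reads $1369-18=1351>1000$), so no admissible $p$ survives.

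The only point needing care is that Lemma \ref{x2neqp} really applies. Its hypotheses are $50<a\leq1000$, $1\le k\le n/2$ and $p\geq\max\{1+2k,11\}$, and all of these are given, because the lemma internally uses $c_0=2$, which holds for $p\geq 11$. There is therefore no real obstacle. I expect the proof in the paper to be exactly this short deduction; the hypothesis $p\geq 37$ is there precisely so that $p^2>1000+p$ rules out $v_0=2$.
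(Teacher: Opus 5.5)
Your proof is correct and is essentially the paper's argument: Lemma \ref{x2neqp} forces $v_0\geq u_0+1\geq 2$, so $p^2\mid a+j_0$ with $j_0\leq p$, and this is impossible for $p\geq 37$ and $a\leq 1000$. Your extra refinement $j_0\leq k\leq (p-1)/2$ is valid but not needed, since $j_0\leq p$ already gives $p^2\leq 1000+p$, which fails for $p\geq 37$.
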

    \begin{proof}
    By Lemma \ref{x2neqp}, $v_0\geq u_0+1$. As $u_0\geq 1$, $p^2 \mid \Delta(a+1, p)$. That is, $a+p\geq p^2$, which is a contradiction since $p\geq 37$. Hence, the proof.
    \end{proof}
    \begin{lemma}\label{nofactor6}
       Let $k\geq 6$, $n\geq 2k$ and $50<a\leq 1000$. Then $\mathcal{L}_n^{(a)}(x)$ has no factor of degree $k$.
    \end{lemma}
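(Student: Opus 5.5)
The plan is to produce a prime $p\mid\Delta(n-k+1,k)$ with $p\geq \max\{1+2k,37\}$ and then apply Lemma \ref{pmax1+2k}. Since $k\geq 6$ and $k<200$ (from $5k<a\leq 1000$), we look for large prime divisors of the block $n-k+1,\dots,n$ of $k$ consecutive integers. We split according to the size of $n$ relative to $k$.

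\emph{Case $n>4k$.} Set $m=n-k+1$, so $m>3k$. To make Lemma \ref{4.42k} applicable, we instead use Lemma \ref{2k+13}. Its hypothesis $m'=n\geq\max(2k+13,\tfrac{541}{262}k)$ holds for $n\geq 2k$ and $k\geq 13$; for $6\leq k\leq 12$ we also have $n>4k\geq 2k+13$ when $k\geq 7$. That gives $P(\Delta(n-k+1,k))>2k$, i.e.\ a prime $p\geq 2k+1$. When $n>4k$, Lemma \ref{4.42k} applied to $\Delta(n-k+1,k)$ requires $n-k+1>4k$. Where that holds (outside the finite exceptional set $T$, which involves only $k\leq 7$ and small $m$, checked directly), it gives $p>4.42k\geq 26.5$. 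We still need $p\geq 37$, which is automatic once $k\geq 9$ (since $4.42k>37$).

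\emph{Remaining small $k$.} For $6\leq k\leq 8$ we use Lemma \ref{table-najman}. Any block of $k\geq 6$ consecutive integers contains a block of $6$, so for $n-k+1>37$ we get a prime factor $>37$, hence $p\geq 41\geq\max\{1+2k,37\}$. The range $n-k+1\leq 37$, i.e.\ $n\leq 44$ with $6\leq k\leq 8$ and $n\geq 2k$, is a finite set, handled directly.

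\emph{Case $2k\leq n\leq 4k$ with $k\geq 9$.} Lemma \ref{2k+13} gives $p>2k$ when $n\geq 2k+13$; for $2k\leq n<2k+13$ the same lemma holds once $\tfrac{541}{262}k\leq n$ fails only in a bounded region. Alternatively, use the classical Sylvester-type fact that $\binom{n}{k}$ with $n\geq 2k$ has a prime factor $>k$ dividing $\Delta(n-k+1,k)$; combined with $p\nmid k!$ this yields $p\geq k+1$, which is not enough. So I rely on Lemma \ref{2k+13}, and treat $n<2k+13$, $n<\tfrac{541}{262}k$ separately. Since $\tfrac{541}{262}<2.07$, the window $2k\leq n<2.07k$ is narrow, and in it primes between $n-k$ and $n$ (Bertrand/Ramaré–Saouter style, or direct check since $k<200$) give $p>n-k\geq k$, with $p\in(n-k,n]$ and $p>2k$ needed. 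The main obstacle is precisely ensuring $p\geq 37$ and $p\geq 2k+1$ simultaneously when $k$ is small (say $9\leq k\leq 18$, where $2k+1<37$). Here I would first try Lemma \ref{table-najman} again: for $n-k+1>37$ it gives $p>37$. The residual finitely many $(n,k)$ with $n\leq 36+k$, $k<200$, would be verified computationally by exhibiting a suitable prime.

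Combining the cases, a prime $p\geq\max\{1+2k,37\}$ dividing $\Delta(n-k+1,k)$ always exists, and Lemma \ref{pmax1+2k} finishes the proof.
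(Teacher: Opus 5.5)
There is a genuine gap. Your closing claim, that a prime $p\geq\max\{1+2k,37\}$ dividing $\Delta(n-k+1,k)$ always exists, is false.

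\begin{itemize}
\item For $n=2k$ the block is $(k+1)(k+2)\cdots(2k)$, and every prime factor of it is at most $2k$. So Lemma \ref{pmax1+2k} can never be invoked there, and no computation ``exhibiting a suitable prime'' can succeed.
\item The error comes from misreading Lemma \ref{2k+13}. Since $k<200$, one has $\max\{2k+13,\tfrac{541}{262}k\}=2k+13$, so the hypothesis is $n\geq 2k+13$, not $n\geq 2k$. The uncovered window $2k\le n\le 2k+12$ therefore exists for \emph{every} $k$, not just a narrow strip $n<2.07k$.
\item The same obstruction appears in your small-$k$ cases. For example, for $(n,k)=(12,6)$ the block $\Delta(7,6)$ has no prime factor $\geq 13$, let alone one $\geq 37$.
\end{itemize}

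The missing idea is to use the \emph{other} block, $\Delta(n+a-k+1,k)$, once $n$ is small. First use Lemma \ref{pmax1+2k}, together with Lemma \ref{2k+13} or Lemma \ref{table-najman} respectively, to reduce to $2k\le n\le 2k+12$ when $k\geq 18$, and to $n\le 36$ when $6\le k<18$. Then proceed as follows.

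\begin{itemize}
\item Since $a>5k$, one has $n+a-k+1>6k+1$. Hence Lemma \ref{4.42k} (for $k\geq 18$) or Lemma \ref{table-najman} (for $6\le k<18$) gives a prime $p>4.42k$, respectively $p>37$, with $p\mid n+a-i$ for some $0\le i\le k-1$.
\item Such a $p$ satisfies $p>2k$ and $p^2-p\geq 1000\geq a$. So by Lemma \ref{previous}$(i)$ it must divide some $a+j$ with $1\le j\le k$, since otherwise there is no factor of degree $k$.
\item Then $p\mid n-(i+j)$ with $1\le i+j\le 2k-1$, which forces $p\le n-1$.
\item This contradicts $n-1\le 2k+11<4.42k$, respectively $n-1\le 35<37$.
\end{itemize}

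Without this step, the range with $n$ close to $2k$ is not covered. The only alternative would be a direct factorisation of $\mathcal{L}_n^{(a)}(x)$ for every $50<a\le 1000$ and every $n\le 2k+12$ with $k<200$, which is a very large computation.
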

    \begin{proof}
        Assume that $\mathcal{L}_n^{(a)}(x)$ has a factor of degree $k$. We check that $\max\left\{2k+13, \frac{541}{262}k\right\}=2k+13$, since $k<200$. Therefore by using Lemma \ref{2k+13}, we have $\text{if } n\geq  2k+13, \text{ then }P(\Delta(n-k+1,k))\geq 2k+1.$  Let $k\geq 18$. Then $\max\{1+2k, 37\}=1+2k$ and by Lemma \ref{pmax1+2k} we conclude that $2k\leq n\leq 2k+12$. Further,  $n+a-k+1>2k+5k-k+1= 6k+1$. Thus by Lemma \ref{4.42k}, we have $P(\Delta(n+a-k+1,k))> 4.42k.$ Let $p=P(\Delta(n+a-k+1,k)$. Since $k \geq 18$ and $p>4.42k > 37$, we have $p^2-p\geq 1000\geq a$. Therefore $p$ must divide $\Delta(a+1,k)$ by Lemma \ref{previous}$(i)$. Let $0\leq i\leq k-1$ and $1\leq j\leq k$ be such that $p \mid (n+a-i)$ and $p \mid (a+j)$. Then $p \mid n-(i+j)$, $1\leq i+j< 2k$. That is, $p\leq n-1\leq 2k+11$, which is a contradiction. Let $6\leq k<18$. Then $\max\{1+2k, 37\}=37$. Using Lemma \ref{table-najman}, 
        we have $P(\Delta(n-k+1,k))\geq37$ for $n-k+1> 31$ and hence $n\geq 37$. Therefore, by Lemma \ref{pmax1+2k}, we can assume that $n\leq 36$. 
        Then since $n+a-k+1>6k+1\geq 37$, by Lemma  \ref{table-najman},  $P(\Delta(n+a-k+1, k))>37$. That is, there exists a $p> 37$ such that  $p \mid (n+a-i)$, where $0\leq i\leq k-1$. Hence, by Lemma \ref{previous}$(i)$, as $p^2-p>1000$ and $p>2k$, we have $p \mid (a+j)$ for some $1\leq j\leq k$. That is, $p \mid n-(i+j)$ for some $1\leq i+j<2k$ and $p\leq n-1\leq 35$, which is a contradiction. 
    \end{proof}

    We now assume that \(1\leq k\leq 5\). Using Lemmas \ref{slope1}, \ref{v0gequ0+1} and \ref{x2neqp}, we derive restrictions on the prime divisors of \(\Delta(n-k+1,k)\). 
       
    \begin{lemma}\label{lemmak=1to5}
        Let $1\le k\le 5$, $n\ge 2k$, and $50<a\le 1000$. Assume $\mathcal{L}_n^{(a)}(x)$ has a factor of degree $k$. If $p$ is  a prime dividing $ \Delta(n-k+1,k)$, then $p\le 31$. Further, for $11\le p\le 31$, we have $\nu_p\bigl(\Delta(n-k+1,k)\bigr)=1,$
    and 
    $p^2\parallel \Delta\left(a+k-\left\lceil\frac{k}{2}\right\rceil+1,\left\lceil\frac{k}{2}\right\rceil\right).$
    
    Consequently, $
    \omega_{11}\!\bigl(\Delta(n-k+1,k)\bigr)\le 1,$ except in the exceptional case $k=5$ with $a\in\{358,842\}$, where $\omega_{11}\!\bigl(\Delta(n-k+1,k)\bigr)\le 2$. Here $\omega_p(m)$ is defined as in \eqref{omega}.  Furthermore, in the case $k=2$, we also have $\omega_7\bigl(\Delta(n-k+1,k)\bigr)\le 1,$
    and if $p=7$, then either $p^2\mid(a+2)$ or $p^3\parallel(a+4)$, with $p\parallel n(n-1)$.
    \end{lemma}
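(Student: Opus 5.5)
The plan is to split the primes $p\mid\Delta(n-k+1,k)$ into ranges and use Lemmas \ref{pmax1+2k} and \ref{x2neqp} for the large ones, then handle $k=2$, $p=7$ separately.

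\textbf{Primes $p\ge 37$.} Since $k\le 5$, we have $\max\{1+2k,37\}=37$. Lemma \ref{pmax1+2k} then excludes every prime $p\ge 37$ dividing $\Delta(n-k+1,k)$. Hence $p\le 31$.

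\textbf{Primes $11\le p\le 31$.} Here $p\ge\max\{1+2k,11\}$, so Lemma \ref{x2neqp} applies. It gives $r_1=j_0$, $v_0=2$, $u_0=1$, $j_0\ge l+1$ and $k\in\{j_0,\dots,j_0+l\}$.
\begin{itemize}
\item From $u_0=1$ and $p>k$, the factor $p$ occurs in exactly one term of $\Delta(n-k+1,k)$, to the first power. So $\nu_p(\Delta(n-k+1,k))=1$.
\item From $v_0=2$ we get $p^2\parallel(a+j_0)$. Since $j_0\le k-l$ and $j_0\ge l+1$, we have $j_0\ge \lceil k/2\rceil$, i.e.\ $a+j_0$ lies in $\Delta(a+k-\lceil k/2\rceil+1,\lceil k/2\rceil)$ after adjusting indices. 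I need to check this index carefully, using $l\le k-1$ and $k\le j_0+l$.
\item Since $p>k$, no other term of that product is divisible by $p$. This gives $p^2\parallel\Delta(a+k-\lceil k/2\rceil+1,\lceil k/2\rceil)$.
\end{itemize}

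\textbf{Bound on $\omega_{11}$.} Suppose two primes $p\ne q\ge 11$ divide $\Delta(n-k+1,k)$. Then $p^2$ and $q^2$ both divide terms $a+j_0$, $a+j_0'$ with $j_0,j_0'\in[\lceil k/2\rceil,k]$. Both also satisfy $k-l\ge j_0\ge l+1$ and $k-l'\ge j_0'\ge l'+1$.
\begin{itemize}
\item For $k\le 2$ the window has one element, forcing $p^2q^2\mid a+j_0$. This needs $a\ge 121\cdot 169$, which is impossible.
\item For $k=3,4$ the window has two elements, giving $p^2\mid N$, $q^2\mid N-1$ with $N\le 1005$. Lemma \ref{equationmm'}(i) excludes this.
\item For $k=5$ the window is $\{a+3,a+4,a+5\}$. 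Differences $1$ are excluded by Lemma \ref{equationmm'}(i). Difference $2$ is handled by Lemma \ref{equationmm'}(ii), with $N\in\{847,363\}$, which gives $a\in\{842,358\}$.
\end{itemize}
So $\omega_{11}\le1$ except $k=5$, $a\in\{358,842\}$, where it is at most $2$.

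\textbf{The case $k=2$, $p=7$ (main obstacle).} Lemma \ref{x2neqp} needs $p\ge 11$, so I will instead use Lemmas \ref{slope1} and \ref{v0gequ0+1} directly with $p=7$ and $c=2$, since $7\ge 1+2k$.
\begin{itemize}
\item First compute $c_0$ from \eqref{c0 def}. Since $6\cdot 7^3=2058\ge a+56$, we get $c_0\le 3$. Then check $\frac{1}{6}(7^{v_0}+7)\ge c_0$, which holds for $v_0\ge 2$. The case $v_0=1$ must be treated separately: there $\phi^{(1)}\le 0$ by the argument of Lemma \ref{v0gequ0+1}, giving a contradiction.
\item Check $6\nu_7(t_0+1)\le 7^{v_0}$, using Remark \ref{remark}(i).
\item Lemma \ref{v0gequ0+1} then gives $v_0\ge u_0+1$ and $r_1=j_0\ge l+1$.
\item With $k=2$, Lemma \ref{v0gequ0+1} gives $k\in\{\beta s,\dots,\beta s+l\}$ where $\phi^{(1)}=(v_0-u_0)/j_0=r/s$. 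Running through $j_0\in\{1,\dots,7\}$ should leave $j_0=2$ with $7^2\mid a+2$.
\item The case $r_2=p$ (alternative (c)) must also be analysed; this is presumably what leads to $7^3\parallel(a+4)$.
\item Finally, $u_0=1$ yields $7\parallel n(n-1)$.
\end{itemize}

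For $\omega_7\le1$, I would combine this with the $p\ge11$ result. The cases $7^2\mid a+2$ with $q^2\mid a+2$, $q\ge11$, are impossible by size. The case $7^3\mid a+4$ with $q^2\mid a+2$ is excluded by a direct check with $N\le1005$.

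This $p=7$ case analysis is where I expect the real difficulty.
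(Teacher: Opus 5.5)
\textbf{Gap in the case $k=2$, $p=7$.} Your treatment of $p\ge 37$, of $11\le p\le 31$, and of the $\omega_{11}$ bound matches the paper. The case $k=2$, $p=7$, however, has a genuine gap, and it comes from your choice $c=2$.

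With $c=2$, the hypothesis $\frac{c-1}{p-1}(p^{v_0}+p)\ge c_0$ of Lemma~\ref{slope1} reads $\frac{7^{v_0}+7}{6}\ge c_0$. For $v_0=1$ this fails whenever $c_0=3$, i.e.\ for all $238<a\le 1000$. The second hypothesis $6\nu_7(t_0+1)\le 7^{v_0}$ can also fail there, since Remark~\ref{remark}$(i)$ only gives $\nu_7(t_0+1)\le 2$.

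Your fallback, that $\phi^{(1)}\le 0$ ``by the argument of Lemma~\ref{v0gequ0+1}'', does not work. That argument presupposes $j_0\ge l+1$ and $r_1\in\{j_0,p\}$, and both are outputs of Lemma~\ref{slope1} under exactly the hypotheses that fail. The claim is also false in general: if $7\mid n-1$ and $7\parallel(a+1)$, then $j_0=1\le l=1$ and $\phi_1(7)=\nu_7(a+1)-\nu_7(n)=1$.

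Ruling out $v_0=1$ cannot be skipped. Neither $7^2\mid(a+2)$ nor $7^3\parallel(a+4)$ is compatible with $v_0=1$, since they force $j_0=2$, $v_0\ge2$ and $j_0=4$, $v_0\ge3$ respectively.

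The repair is to take $c=3$, which is admissible since $7=1+3k$. Then
\[
\tfrac{2}{6}\bigl(7^{v_0}+7\bigr)\ge \tfrac{14}{3}>3\ge c_0
\quad\text{and}\quad
6\nu_7(t_0+1)\le 6(c_0-v_0)\le 12<2\cdot 7^{v_0}
\]
for every $v_0\ge 1$. So Lemma~\ref{v0gequ0+1} applies uniformly and gives $v_0\ge u_0+1\ge 2$, $j_0\ge l+1$ and $r_1=j_0$, with no separate case. This is what the paper does.

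\textbf{The window index.} You wrote $j_0\le k-l$, which is backwards. From $k\in\{j_0,\dots,j_0+l\}$ you get $k-l\le j_0\le k$. Combined with $j_0\ge l+1$ this gives $2j_0\ge k+1$, i.e.\ $j_0\ge\lfloor k/2\rfloor+1=k-\lceil k/2\rceil+1$, which is exactly the stated window. Your bound $j_0\ge\lceil k/2\rceil$ is too weak for even $k$; for instance, it allows $j_0=1$ when $k=2$. The window sizes you actually use in the $\omega_{11}$ count are nevertheless the correct ones.

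\textbf{Where $7^3\parallel(a+4)$ comes from.} It does not come from $r_2=p$. That case is impossible: alternative (c) of Lemma~\ref{slope1} forces $j_0\in\{5,6\}$, and then $\phi^{(1)}=(v_0-u_0)/j_0\le 2/5<1/2$. In the $r_2\ne p$ enumeration, both $j_0=2$ and $j_0=4$ survive, the latter with $v_0-u_0=2$ (slope $1/2$, $s=2$, $\beta=1$). Since $v_0\le 3$, this forces $v_0=3$ and $u_0=1$, which gives $7^3\parallel(a+4)$. The conclusion $7\parallel n(n-1)$ comes from this same $u_0=1$; in the branch $v_0-u_0=1$, $u_0=2$ is not excluded.
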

    \begin{proof}
     Assume that $\mathcal{L}_n^{(a)}(x)$ has a factor of degree $k$, where $1\leq k\leq 5$. Then by Lemma \ref{pmax1+2k},  we have $p\leq 31$. Let $11 \le p \le 31$. By Lemma \ref{x2neqp}, we have $l+1\leq j_0\leq k\leq j_0+l$, $v_0=2$ and $u_0=1$. Since $u_0=1$ and \(p>k\), it follows that $p \parallel \Delta(n-k+1, k)$.    
        \begin{center}\setlength{\tabcolsep}{6pt}
        \scalebox{0.7}{
        \begin{tabular}{|c|c||cc||ccc||cccc||ccccc|}
        \hline
        \textbf{$k$}   & 1 & \multicolumn{2}{c||}{2} & \multicolumn{3}{c||}{3} & \multicolumn{4}{c||}{4} & \multicolumn{5}{c|}{5} \\
        \hline
        \textbf{$l$}   & 0 & 0 & 1 & 0 & 1 & 2 & 0 & 1 & 2 & 3 & 0 & 1 & 2 & 3 & 4 \\
        \hline
        \textbf{$j_0$} & 1 & 2 & 2 & 3 & 2,3 & 3 & 4 & 3,4 & 3,4 & 4 & 5 & 4,5 & 3,4,5 & 4,5 & 5 \\
        \hline
        \end{tabular}}
        \end{center}
       \noindent Further using $p^2 \parallel(a+j_0)$, together with the possible values of \(j_0\) listed in the table, we obtain  $p^2 \parallel(a+k)$ if $k\in\{1,2\}$, $p^2 \parallel(a+k-1)(a+k)$ if $k\in\{3, 4\}$ and $p^2 \parallel(a+k-2)(a+k-1)(a+k)$ if $k=5$.
     If $\omega_{11}(\Delta(n-k+1,k))\geq 2$, then we have distinct primes $p,q$ such that $p^2\mid (a+k-i)$ and $q^2\mid (a+k-j)$, where $0\leq i-j< \left\lceil\frac{k}{2}\right\rceil$. Then by applying Lemma \ref{equationmm'} with $N=a+k-j$, we get the only possible case is when $k=5$,  $p^2 \mid (a+3)$ and $q^2 \mid (a+5)$. Thus, we obtain $(a, p,q, m, m')\in\{(842, 11, 13, 7, 5),(358, 11, 19, 3, 1)\}$. Therefore $\omega_{11}(\Delta(n-k+1,k))\leq 1$ except possibly when $k=5$ and $a\in\{358, 842\}$.  For $a\in\{358, 842\}$, as above, we have $\omega_{11}(\Delta(n-k+1, k))\leq 2$.
    
    Next, we consider the case when $k=2$ and $p=7$. Then in Lemma \ref{slope1}, we take $c=3$ and $c_0=3$. Hence $\frac{c-1}{p-1}(p^{v_0}+p)\geq \frac{4p}{p-1}\geq c_0$. 
     Let $\alpha=\nu_p(t_0+1)$. Suppose that $p^{v_0}(c-1)<\alpha(p-1)$. From Remark \ref{remark}$(i)$, we have $\alpha\leq c_0-v_0$. Therefore $\frac{2p^{v_0}}{p-1}=\frac{p^{v_0}(c-1)}{p-1}< \alpha \leq 3-v_0$, which is a contradiction. Hence $\alpha(p-1)\leq p^{v_0}(c-1)$. Thus, by Lemma \ref{v0gequ0+1} and as $a\leq 1000$, we obtain $j_0\geq l+1$, $r_1=j_0$ and $1\leq v_0-u_0\leq 2$.  Suppose $r_2=p$. Then, by Lemma \ref{slope1}, we have $1\leq j_0\leq 6$, $p\leq j_0+k$.  
    Since $p=7$ and $k=2$, this gives $j_0=5$ or $j_0=6$. Hence $\frac{v_0-u_0}{j_0}\leq \frac{2}{5}.$ On the other hand, by Remark \ref{remark}$(iii)$, $\phi^{(1)}=\phi_{j_0}(p)=\frac{v_0-u_0}{j_0}>\frac{1}{2}$ which is a contradiction. Thus $r_2\neq p$. If $v_0-u_0=1$, then $(v_0-u_0, j_0)=1$, and from $j_0\leq k\leq j_0+l$ with $k=2$, we get the following. If $l=0$, then $j_0=k=2$; if $l=1$, then since $j_0\geq l+1=2$, again $j_0=2$. Hence $j_0=2$, and therefore $p^2\mid(a+2)$. Now consider $v_0-u_0=2$. If $j_0\in\{1, 3, 5\}$, then $(v_0-u_0,j_0)=1$,  and hence $j_0\leq k\leq j_0+l$, which is impossible since $j_0\geq l+1$ and $k=2$. Thus $j_0\in\{2, 4, 6\}$. But $k(v_0-u_0)\geq j_0$, and hence $j_0=2$ or $j_0=4$. Since $v_0\geq 3$, we have $p^3\mid (a+2)(a+4)$. But  $a\leq 1000$ will imply $p^3\parallel(a+2)(a+4)$ and hence $u_0=1$ implying $p\parallel n(n-1)$. Therefore $p^2\mid (a+2)$ or $p^3\parallel (a+2)(a+4).$  Let $a\notin\{339, 682\}$. Then, if $7\mid n(n-1)$, as $7^3\nmid (a+4)$, we have $7^2\mid (a+2)$.
  Suppose that $\omega_7(n(n-1))\geq 2$. Then there exist two distinct primes $p,q\geq 7$ such that $p^2q^2\mid(a+2)$, which is impossible since $a+2\leq 1002$. Hence $\omega_7(n(n-1))\leq 1.$ For $a\in\{339, 682\}$, suppose there exist a prime $p>7$ such that $p\mid n(n-1)$. Then $p^2\mid(a+2)$, which is impossible  and hence $\omega_7(n(n-1))\leq 1$.
    \end{proof}
    We next derive analogous conditions for prime divisors of $\Delta(n+a-k+1,k)$ when $1\leq k\leq 5$.
    \begin{lemma}\label{n+a- 1<=k<=5}
    Let $1\le k\le 5$, $n\ge 2k$, and $50<a\le 1000$. Assume that $\mathcal{L}_n^{(a)}(x)$ has a factor of degree $k$.  Let $p$ be a prime such that $p \mid \Delta(n+a-k+1,k)$.
    Then the following statements hold:    
    \begin{enumerate}[$(i)$]
    \item If $p\ge 37$, then
    $p \parallel \Delta(a+1,k)
    \quad \text{and} \quad
    p \mid \Delta(n-2k+1,k)$.
    Moreover, $
    P\!\left(\Delta(n+a-k+1,k)\right)
    \le
    P\!\left(\Delta(n-2k+1,k)\right).$
    \item Suppose \(k=1\). Then \(r_1<12\) for \(p=3\). For \(p=5\) and \(p=7\), we have $
p^j\mid(a+j)$ for some \(1\leq j\leq 4\) and \(1\leq j\leq 3\), respectively. If \(11\leq p\leq 31\), then either $
p\mid(a+1)
\ \text{or}\ 
p^2\mid(a+2).$
    \item Suppose $k=2$. If $11\le p\le 31$, then either $p \mid (a+1)(a+2)
    \quad \text{or} \quad
    p^2 \mid (a+3)(a+4)$. Moreover, $p \mid \Delta(n-5,5),$ and hence $P\!\left(\Delta(n+a-1,2)\right)
    \le
    P\!\left(\Delta(n-5,5)\right)$.
    \end{enumerate}
    \end{lemma}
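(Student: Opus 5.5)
For part $(i)$ I will use Lemma \ref{previous}$(i)$ directly. Let $p\ge 37$ divide $\Delta(n+a-k+1,k)$. Since $k\le 5$, we have $p\ge k+2$ and $p>2k$. Also $p^2-p\ge 37\cdot 36>1000\ge a$. So if $p\nmid\Delta(a+1,k)$, Lemma \ref{previous}$(i)$ rules out a factor of degree $k$, a contradiction; hence $p\mid\Delta(a+1,k)$. As $p^2\ge 1369>a+k$, no term $a+j$ with $j\le k$ is divisible by $p^2$. Two distinct terms of $\Delta(a+1,k)$ are also never both divisible by $p$, since $p>k$. Therefore $p\parallel\Delta(a+1,k)$.

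Now take $0\le i\le k-1$ and $1\le j\le k$ with $p\mid(n+a-i)$ and $p\mid(a+j)$. Subtracting gives $p\mid n-(i+j)$ with $1\le i+j\le 2k-1$. Hence $p$ divides one of $n-1,\dots,n-2k+1$, i.e.\ $p\mid\Delta(n-2k+1,2k-1)$. The statement writes $\Delta(n-2k+1,k)$; I would check which block is intended, and if needed sharpen the location of the index using $n\ge 2k$. This places $P(\Delta(n+a-k+1,k))$ among primes dividing a product of integers just below $n$. When $P(\Delta(n+a-k+1,k))<37$ the inequality needs separate care, and I would argue it either from the size of $n$ (via Lemma \ref{table-najman}) or by keeping the inequality only for the large prime.

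For $(ii)$ and $(iii)$, the same proofs as Lemma \ref{slope1}, Lemma \ref{v0gequ0+1} and Lemma \ref{x2neqp} go through with $n$ replaced by $n+a$. This works because every $a_j$ with $j\le n-k$ is divisible by $\Delta(n+a-k+1,k)$, and the Newton polygon computations only use $p^{u_0}\parallel(n+a-l)$ for some $l<k$.

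For $k=1$ and $11\le p\le 31$, the analogue of Lemma \ref{x2neqp} with $c=2$, $c_0=2$ gives $r_1=j_0$ and $k\in\{j_0,\dots,j_0+l\}$. So $j_0=1$, with $v_0\ge u_0+1$. If $v_0=1$ we get $p\mid(a+1)$. Otherwise the exceptional possibility $r_1=p$ is excluded, and slope/degree matching forces $p^2\mid(a+2)$. I expect this second conclusion to come from the $r_2=p$ alternative in Lemma \ref{slope1}$(ii)(c)$ with $j_0=2$.

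For $p=5,7$ I would run Lemma \ref{slope1} with a larger $c$ (as done for $p=7$, $k=2$ in Lemma \ref{lemmak=1to5}) and take $c_0$ from \eqref{c0 def}. The condition $\phi^{(1)}=\phi_{r_1}(p)\ge 1$ requires $\nu_p(\Delta(a+1,r_1))\ge r_1+\nu_p\binom{n+a}{r_1}$. With $r_1$ bounded by Lemma \ref{slope1}$(i)$, this forces a term $a+j$ with $p^j\mid(a+j)$. For $p=3$, the bound $r_1<\max\{j_0+t_1 3^{v_0},3^{v_0}+3\}$ together with $3^{v_0}\le a+3\le 1003$ should give $r_1<12$ after a case check on $v_0$.

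For $(iii)$, $k=2$ and $11\le p\le 31$: Lemma \ref{x2neqp} gives $j_0\in\{1,2\}$ when $v_0=1$, so $p\mid(a+1)(a+2)$. The remaining cases give $v_0=2$ with $j_0\le k+l\le 3$ or $4$, so $p^2\mid(a+3)(a+4)$. Then $p\le 31<a$ gives $p\mid n-(i+j)$ with $i+j\le 5$, so $p\mid\Delta(n-5,5)$. Combining this with $(i)$ for primes $\ge 37$ gives the bound on $P$.

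The main obstacle will be the small primes $3,5,7$ in $(ii)$. Here $c_0$ is large and the hypothesis $(p-1)\nu_p(t_0+1)\le p^{v_0}(c-1)$ may fail, so I expect a finite case analysis over $v_0$ and $j_0$, possibly computer-assisted.
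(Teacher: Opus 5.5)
There is a genuine gap in $(ii)$ and $(iii)$, and a smaller one in $(i)$.

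\textbf{Parts $(ii)$ and $(iii)$.} Your argument rests on the claim that Lemmas \ref{slope1}, \ref{v0gequ0+1} and \ref{x2neqp} ``go through with $n$ replaced by $n+a$''. That claim is false.

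\emph{Why the transfer fails.} The Newton polygon is still that of $\mathcal{L}_n^{(a)}(x)$, with slopes $\phi_j(p)=\bigl(\nu_p(\Delta(a+1,j))-\nu_p\binom{n}{j}\bigr)/j$. The conclusions $j_0\ge l+1$, $v_0\ge u_0+1$ and $v_0=2$ come from $p^{u_0}\parallel(n-l)$: they use the zero-slope edge of length $l-j_0$, and $\nu_p\binom{n}{j_0}=u_0$ in Remark \ref{remark}$(iii)$. For $p\mid(n+a-i)$ they cannot be obtained. Take $k=1$, $p\parallel(a+1)$ and $p\mid(n+a)$. Then $p\mid(n-1)$ and $p\nmid n$, so $\phi_1(p)=1$ and the slope condition is met with $v_0=1$; this is why the statement allows $p\mid(a+1)$. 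Your sketch shows the strain: it asserts $j_0=1$ and $v_0\ge u_0+1\ge 2$, then treats $v_0=1$, then looks for $p^2\mid(a+2)$ from $j_0=2$.

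\emph{Why the transferable piece is too weak.} Only Lemma \ref{slope1}$(i)$ genuinely transfers, and its bound $r_1<\max\{j_0+t_1p^{v_0},p^{v_0}+p\}$ is exponential in $v_0$. For $p=3$, $v_0=6$ it allows $r_1$ around $730$, so no case check yields $r_1<12$. For $p=3$, $v_0=1$ the hypothesis \eqref{ck} fails outright, since $c_0\ge 4$ once $a>50$.

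\emph{The missing idea} is a bound on $r_1$ that is linear in the valuation. Write $r_1=j_0+i_1$ and $\tilde v=\max_{0\le i\le i_1}\nu_p(a+j_0+i)$. Then $\frac1k\le\phi_{r_1}(p)\le\frac{\tilde v+\nu_p(i_1!)}{r_1}<\frac{\tilde v}{r_1}+\frac{1}{p-1}$, which gives $r_1<\frac{p-1}{c-1}\,\tilde v$, with $c$ the largest integer such that $p\ge 1+ck$. On the other hand $p^{\tilde v}\le a+r_1$. For $a\le 1000$ these force $\tilde v\le 1,2,3,4,6$ for $p\ge 37$, $p\ge 11$, $p=7$, $p=5$, $p=3$ respectively. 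Hence:
\begin{enumerate}[$\bullet$]
\item $r_1<12$ for $p=3$;
\item for $k=1$: $r_1\le 5,3,2$ for $p=5$, $p=7$, $p\ge 11$;
\item for $k=2$, $p\ge 11$: $r_1\le 4$;
\item for $p\ge 37$: $r_1<p$, so $r_1=j_0\le k$, which is how the paper proves the first half of $(i)$.
\end{enumerate}
Inserting these bounds into $\nu_p(\Delta(a+1,r_1))\ge r_1/k$, with at most one multiple of $p$ in the window, gives every divisibility claim of $(ii)$ and $(iii)$ at once.

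\textbf{Part $(i)$.} Deriving $p\parallel\Delta(a+1,k)$ from Lemma \ref{previous}$(i)$ is correct and a legitimate shortcut; the paper uses that lemma the same way in Lemma \ref{nofactor6}. But you stop at $p\mid\Delta(n-2k+1,2k-1)$ and suggest locating the index via $n\ge 2k$, which cannot work. The needed input is Lemma \ref{pmax1+2k}. Since $k\le 5$ gives $\max\{1+2k,37\}=37\le p$, the prime $p$ cannot divide $\Delta(n-k+1,k)$. So it divides none of $n-1,\dots,n-k+1$ and must divide one of $n-k,\dots,n-2k+1$, i.e.\ $p\mid\Delta(n-2k+1,k)$. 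The ``moreover'' then needs no extra care. Under the hypothesis of $(i)$, $P(\Delta(n+a-k+1,k))\ge 37$ is itself such a prime.
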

    \begin{proof}
    Assume that $\mathcal{L}_n^{(a)}(x)$ has a factor of degree $k$. Let $c\ge 2$ be the greatest integer satisfying $p \ge 1+ck$. Consider $NP_p\left(\mathcal{L}_n^{(a)}(x)\right)$.  
    As $\nu_p(\Delta(a+1, j))=0$ for $j<j_0$, we have $r_1\geq j_0$ by \eqref{phij>j/k}. Following the approach of Lemma \ref{slope1}, we derive an upper bound for $r_1$. We write $r_1=j_0+i_1$ and define $\Tilde{v}=\max\{\nu_p(a+j_0+i):0\leq i\leq i_1 \}.$
     Let $r_1>j_0$. Then $i_1\geq 1$ and  $\Tilde{v}\geq v_0$. Moreover, $r_1\geq p^{\Tilde{v}}-a$. Further, $
         \frac{1}{k}\leq \phi_{r_1}(p)\leq \frac{\nu_p(\Delta(a+j_0, i_1+1))}{r_1}
         \leq \frac{\Tilde{v}}{r_1}+\frac{\nu_p(i_1!)}{i_1}<\frac{\Tilde{v}}{r_1}+\frac{1}{p-1},$ by Lemma \ref{sigma}.
        Therefore we have $\frac{\Tilde{v}}{r_1}> \frac{1}{k}-\frac{1}{p-1}\geq \frac{c-1}{p-1}$. Let $r_1=j_0$. Then $\Tilde{v}=v_0$ and  $\frac{1}{k}\leq\phi_{r_1}(p)\leq \frac{v_0}{r_1}=\frac{\Tilde{v}}{r_1}.$ 
        Hence, in both cases, $r_1< \frac{p-1}{c-1}\Tilde{v}.$ Consequently, $p^{\Tilde{v}}-a\leq r_1\leq  (p-1)\Tilde{v}$. Since $a\leq 1000$, this yields the following bounds on  $\Tilde{v}$, and hence the corresponding bounds on $r_1$:
    \begin{center}\scalebox{0.7}{
            \begin{tabular}{|c|c|c|c|c|c|}
            \hline
            $p$ & $\geq 37$ & $\geq 11$ & $7$ &$5$ &$3$\\
            \hline
              $\Tilde{v}$ & $1$ & $\leq 2$& $\leq 3$& $\leq 4$& $\leq 6$\\
              \hline
              $r_1 < $& $\frac{p-1}{c-1}$  & $\frac{2(p-1)}{c-1}$& $ \frac{18}{c-1}$& $\frac{16}{c-1}$& $ 12$\\
            \hline
            \end{tabular}}
       \end{center}
        Note that $k=1$ for $p=3$ and hence $c=2$.      

        \noindent $(i).$ Let $p\mid (n+a-i)$ for some $0\leq i\leq k-1$. If $p\geq 37 $, then $r_1\leq \frac{p-1}{c-1}<p$. Further, $\nu_p(\Delta(a+1, p))=\nu_p(a+j_0)=1$ since $a\leq 1000$ and $j_0\leq p$. Consider $\phi_j(p)$ for $j<p$. 
        As $\nu_p(\Delta(n-j_0+1, j_0))\leq \nu_p(\Delta(n-j+1, j))$ for $j_0<j<p$, by the definition of $\phi_j(p)$  
          we have $\phi_{j_0}(p)>\phi_j(p)$ and therefore $r_1=j_0$.
        Further, $\phi_{r_1}(p)\leq \frac{1}{r_1}$. By Lemma \ref{fil1}, we may assume that $
\phi_{r_1}(p)\geq \frac{1}{k}.$ Hence \(r_1\leq k\), and so $p\parallel\Delta(a+1,k)$.  Since $p\mid(n+a-i)$ and $p\mid(a+j_0)$, we have $p\mid(n-(j_0+i))$ where $1\leq j_0+i\leq 2k-1$. It follows that $p\mid\Delta(n-2k+1, 2k)$. On the other hand, by Lemma \ref{pmax1+2k}, we have $P(\Delta(n-k+1,k))\leq 31$. Therefore, we conclude that $p\mid \Delta(n-2k+1,k)$. Consequently, each prime $p\geq 37$ dividing $\Delta(n+a-k+1,k)$ also divides $\Delta(n-2k+1,k)$. Hence $P(\Delta(n+a-k+1,k))\leq P(\Delta(n-2k+1,k)).$\\ 
        \noindent$(ii).$ Let $k=1$. Then $c=p-1$. From the above table, we have        
        \begin{center}\scalebox{0.7}{
            \begin{tabular}{|c|c|c|c|c|}
            \hline
              $p$   & $3$&$5$ &$7$&$\geq 11$ \\
              \hline
               $r_1\leq$ &$11$ &$5$ &$3$ &$2$\\
               \hline
            \end{tabular}}
        \end{center} 
        Let $p=5$. By \eqref{phij>j/k}, we have $\nu_5(\Delta(a+1,r_1))\geq r_1$. Since $r_1\leq p=5$ and $a\leq 1000$, $\nu_5(\Delta(a+1,r_1))\leq 4$. Therefore there exists $j$, $1\leq j\leq 4$, such that $\nu_5(a+j)\geq j.$ Equivalently, $5^j\mid(a+j)$ for some $1\leq j\leq 4$. Let $p=7$. Then $r_1\leq 3<p$. Arguing as above, there exists $j$, $1\leq j\leq 3$, such that $\nu_7(a+j)\geq j$.  Equivalently, $7^j\mid(a+j)$ for some $1\leq j\leq 3$. Similarly, let $p\geq 11$. Then   $\nu_p(a+1)\geq 1$ or $\nu_p(a+2)\geq 2$.  \\
        \noindent $(iii)$. Let $k=2$.
        If $p\geq 11$, then $r_1\leq \frac{4(p-1)}{p-3}$. Thus if $p=11$, then $r_1\leq 5$ and if $p\geq 13$, then $r_1\leq 4$. Since $a\leq 1000$, it follows that $p\mid(a+1)(a+2)$ or $p^2\mid(a+3)(a+4)$. Hence $p\mid(a+j_0)$ for some $1\leq j_0\leq 4$ and $p\mid(n+a-i)$, $0\leq i\leq 1$, implying $p\mid n-(i+j_0)$, where $1\leq i+j_0\leq 5$. That is, $p\mid\Delta(n-5, 5)$ and since $\Delta(n+a-1, 2)$ has a prime divisor $p\geq 11$ by our assumption, we obtain $P(\Delta(n+a-1,2))\leq P(\Delta(n-5,5))$.
        \end{proof}

        \begin{lemma}\label{nofactor3-5}
            Let $3\leq k\leq 5$, $50<a\leq 1000$, and $n\geq 2k$. Then $\mathcal{L}_n^{(a)}(x)$ has no factor of degree $k$.
        \end{lemma}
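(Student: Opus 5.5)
We argue by contradiction: assume $\mathcal{L}_n^{(a)}(x)$ has a factor of degree $k$ with $3\le k\le 5$, $50<a\le 1000$, $n\ge 2k$. The plan is to show that $\Delta(n-k+1,k)$ has too few large prime factors to be a product of $k$ consecutive integers.

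By Lemma \ref{lemmak=1to5}, every prime divisor of $\Delta(n-k+1,k)$ is at most $31$. Moreover, $\omega_{11}(\Delta(n-k+1,k))\le 1$, with $\le 2$ only when $k=5$ and $a\in\{358,842\}$. Any prime $p\in[11,31]$ dividing $\Delta(n-k+1,k)$ does so exactly once, and $p^2\mid(a+k-\lceil k/2\rceil+1)\cdots(a+k)$. This gives strong control for $n$ large.

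\textbf{Large $n$.} Write the block $\Delta(n-k+1,k)=m_1m_2\cdots m_k$. At most one term (two in the exceptional case) contains a prime $\ge 11$, and that prime appears only to the first power. All other terms are $7$-smooth.
\begin{itemize}
\item For $k\ge 4$ we get two consecutive $7$-smooth integers $m,m+1$. Lehmer's tables (St{\o}rmer's theorem) bound these, giving $m\le 4374$.
\item More directly, the block contains at least $k-1\ge 2$ (or $k-2$ in the exceptional case) terms composed only of primes $\le 7$. We use Lemma \ref{4.42k} in the form $P(\Delta(n-k+1,k))>4.42k$ when $n-k+1>4k$, with the finitely many exceptions in $T$. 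For $k=5$ this forces a prime $>22$, hence $p\in\{23,29,31\}$. For $k=3,4$ it forces a prime $\ge 17$, resp. $\ge 19$.
\end{itemize}
Then apply the same reasoning to the remaining $k-1$ consecutive terms, which are free of primes $\ge 11$ (except in the exceptional case). Lemma \ref{4.42k} applied to a subblock of length $k-1\ge 2$ demands a prime $>4.42(k-1)\ge 8.84$, i.e. a prime $\ge 11$, a contradiction once $n-k+2>4(k-1)$ and the pair avoids $T$. This reduces everything to $n\le $ a small explicit bound (roughly $5k$) together with the finitely many pairs in $T$.

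\textbf{Exceptional case.} For $k=5$ with $a\in\{358,842\}$, the two allowed primes are $11,13$ or $11,19$. The other three consecutive terms must be $7$-smooth, and Lemma \ref{4.42k} with $k=3$ again forces a prime $>13$ unless the subblock start lies in $T$. So again only finitely many $n$ survive.

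\textbf{Small $n$.} For the remaining small $n$ (say $2k\le n\le 4374+k$, though in practice $n\le 25$ or so), we use Lemma \ref{n+a- 1<=k<=5}$(i)$: $P(\Delta(n+a-k+1,k))\le P(\Delta(n-2k+1,k))$. The left side is typically large since $n+a$ is large. For $n$ small, $P(\Delta(n-2k+1,k))\le n$, while Lemma \ref{4.42k} gives $P(\Delta(n+a-k+1,k))>4.42k$. Taking $p=P(\Delta(n+a-k+1,k))$: if $p\ge 37$ then $p\le n-1$. If $p\le 31$, Lemma \ref{slope1} gives constraints $p^2\mid \Delta(a+1,k)$-type conditions. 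Combined with the requirement that $p\mid\Delta(n-k+1,k)$ only simply and $a\le1000$, the few surviving $(n,a)$ are eliminated by finite checks of the conditions of Lemma \ref{lemmak=1to5}: $p^2\parallel(a+j)$ with $p\mid n-i$ and the edge-length condition $k\in\{j_0,\dots,j_0+l\}$ from Lemma \ref{x2neqp}.

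\textbf{Main obstacle.} The hardest part is this small-$n$ range, where St{\o}rmer-type arguments give nothing. I would first try to make Lemma \ref{n+a- 1<=k<=5}$(i)$ plus Lemma \ref{x2neqp} (which forces $v_0=2$, $u_0=1$ and $j_0\le k\le j_0+l$) handle it uniformly. Any residual pairs would be settled by direct computation in SageMath.
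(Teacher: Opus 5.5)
Your overall plan matches the paper's: make $n$ finite from the smoothness of $\Delta(n-k+1,k)$, make $a$ finite from the $p^2$ conditions, then finish by computation. However, the step meant to bound $n$ is false as written. Removing the single term that carries a prime $\ge 11$ leaves $k-1$ \emph{consecutive} integers only if that term sits at an end of the block.
For $k=3$ with the large prime in $n-1$, you are left with $n-2$ and $n$, which are not adjacent. So neither St{\o}rmer/Lehmer for $7$-smooth pairs nor Lemma~\ref{4.42k} applies. In the exceptional case $k=5$, $a\in\{358,842\}$, the two large primes can lie in the second and fourth terms, leaving no two adjacent $7$-smooth terms at all.
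Where an adjacent $7$-smooth sub-block does exist, it may have length only $2$. The exceptional set $T$ for length $2$ contains $2400$ and $4374$, so you do not get $n\lesssim 5k$; values of $n$ up to about $4380$ survive.
The repair is the paper's route. By Lemma~\ref{lemmak=1to5}, every adjacent pair in the block is $31$-smooth, so Lehmer's Table IA gives an explicit finite list of $n$ ($87,46,31$ values for $k=3,4,5$). This list is then thinned using $\omega_{11}\le 1$ and $\nu_p(\Delta(n-k+1,k))\le 1$ for $11\le p\le 31$.

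Your small-$n$ step also does not handle what actually survives. Lemma~\ref{n+a- 1<=k<=5}$(i)$ only says that a prime $p\ge 37$ of $\Delta(n+a-k+1,k)$ divides $\Delta(n-2k+1,k)$. When $\Delta(n-2k+1,k)$ is $31$-smooth, this merely forces $\Delta(n+a-k+1,k)$ to be $31$-smooth, which restricts $a$ but is not a contradiction. For the larger surviving $n$, the bare inequality $P(\Delta(n+a-k+1,k))\le P(\Delta(n-2k+1,k))$ gives nothing.
What is needed there is the paper's two-stage filter. First, for each remaining $n\ge 11$ other than $(n,k)=(16,3)$, the condition $p^2\parallel(a+j)$ for the primes $p\ge 11$ dividing $\Delta(n-k+1,k)$ leaves finitely many $a$ ($379$ triples in all). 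Second, the full form of Lemma~\ref{n+a- 1<=k<=5}$(i)$ is applied: a prime $p\ge 37$ of $\Delta(n+a-k+1,k)$ must divide \emph{both} $\Delta(a+1,k)$ and $\Delta(n-2k+1,k)$. This removes all but $(25,527,3)$, $(56,845,3)$ and $(65,167,3)$, which still need explicit Newton polygons (at $p=29,17,29$) or a direct factorisation.
Also, Lemma~\ref{slope1} cannot supply the ``$p^2\mid\Delta(a+1,k)$-type'' conditions you want for primes $p\le 31$ dividing $\Delta(n+a-k+1,k)$. It requires $p\mid\Delta(n-k+1,k)$, and for $3\le k\le 5$ there is no analogue of Lemma~\ref{n+a- 1<=k<=5}$(ii)$,$(iii)$.
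For the genuinely small cases ($n\le 10$ and $(n,k)=(16,3)$), no prime $\ge 11$ restricts $a$. The only handle is then $31$-smoothness of $\Delta(n+a-k+1,k)$, followed by direct factorisation of the $97$ resulting pairs $(n,a)$.
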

        \begin{proof}
       Assume that $\mathcal{L}_n^{(a)}(x)$ has a factor of degree $k$, where $3 \leq k \leq 5$. By Lemma \ref{pmax1+2k}, we have    $P\bigl(\Delta(n-k+1,k)\bigr) \leq 31$.
    The complete list of positive integers $m$ satisfying $P(m(m-1)) \leq 31$ is given in \cite[Table IA]{lehmer}. Using this table, we explicitly determine all pairs $(n,k)$ with $3 \leq k \leq 5, n \geq 2k$ for which $P\bigl(\Delta(n-k+1,k)\bigr) \leq 31$. This gives $87$, $46$, and $31$ possible values of $n$ for $k=3,4,5$, respectively. Hence there are $164$ such pairs $(n,k)$ in total. These pairs must then be considered for each integer $a$ with $50<a\leq 1000$. By Lemma \ref{lemmak=1to5}, we have $\omega_{11}\bigl(\Delta(n-k+1,k)\bigr)\leq 1$ except when $(k,a)\in\{(5, 358), (5, 842)\}$. In these exceptional cases, we have $\omega_{11}\bigl(\Delta(n-k+1,k)\bigr)\leq 2$. Furthermore, if $\omega_{11}\bigl(\Delta(n-k+1,k)\Bigr)=2$, then as discussed in the proof of Lemma \ref{lemmak=1to5}, this can occur only as follows: $
        11 \cdot 19 \mid \Delta(n-4,5)$ for $ (k,a)=(5, 358),$ $
        11 \cdot 13 \mid \Delta(n-4,5)$  for $(k,a)=(5,842).$ Thus, from the above $31$ possible values of $n$ for $k=5$, we obtain the corresponding triples $(n,a,k)$ which we denote by the set $\Tilde{T}$, where $
    \Tilde{T}:=\{(13, 842,5), (14, 842,5), (15, 842,5), (22, 358,5), (23, 358,5), (26, 842,5), (58, 358,5) ,
    (66, 842,5),\\(156, 842,5) ,(156, 358,5) \}.$
    Therefore, we can assume,  $\omega_{11}\bigl(\Delta(n-k+1,k)\bigr) \leq 1$. Furthermore, by Lemma \ref{lemmak=1to5}, if $11\leq p\leq 31$, then, $\nu_p(\Delta(n-k+1,k))\leq 1$. Starting from the $164$ pairs $(n,k)$ obtained above, we now reduce this list using these conditions. This leaves $31,14,$ and $6$ possible values of $n$ for $k=3,4,5$, respectively, giving a total of $51$ remaining pairs $(n,k).$  Thus, the set of pairs $(n,k)$ requiring further consideration is substantially reduced. For each of these remaining pairs $(n,k)$, we next consider the triples $(n,a,k)$ with $50<a\le 1000$. We check that
    $P\bigl(\Delta(n-k+1,k)\bigr) \geq 11$
    for all $n \geq 11$, except for $(n,k)=(16,3)$.
    By Lemma \ref{lemmak=1to5}, for a prime divisor $p \geq 11$ of  $\Delta(n+a-k+1,k)$,
    $p^2 \parallel (a+k-1)(a+k)$ for $k=3,4$ and $p^2 \parallel (a+k-2)(a+k-1)(a+k)$ for $k=5$. Consequently, for each fixed $k$ with $3 \leq k \leq 5$ and each $n \geq 11$, we collect all integers $a$ satisfying $50 < a \leq 1000$.  These conditions reduce the problem to a finite set of $87$ pairs $(n,a)$ for $k=5$, $96$ pairs $(n,a)$ for $k=4$, and $186$ pairs $(n,a)$ for $k=3$. Together with the $10$ triplets $(n,a,k)$ contained in $\Tilde{T}$, this yields a total of $379$ triplets $(n,a,k)$ under consideration. For all such triples $(n,a,k)$, with the exception of $(65,167,3)$ and $(25,527,3)$, we observe that
    $P\bigl(\Delta(n+a-k+1,k)\bigr)\ge 37$.
   Therefore, we can apply Lemma \ref{n+a- 1<=k<=5}$(i)$ and thus we are left with the cases $(n,a,k)\in\{ (25, 527,3), (56,845,3), (65, 167,3) \},$ when $n\geq 11$ and $(n,k)\neq (16,3)$. We construct a Newton polygon with a suitable prime for the cases with $n\geq 25$ as given in the following table and conclude by Lemma \ref{fil1} that $\mathcal{L}_n^{(\alpha)}(x)$ doesn't have a factor of degree $k=3.$ 
        \begin{center}
            \scalebox{0.7}{
            \begin{tabular}{|c|c|c|c|}
            \hline
                $(n,a,k)$ & $p$ & Vertices of $NP_p(\mathcal{L}_n^{(a)}(x))$& Slope of the right-most edge  \\
                \hline
                 (25, 527, 3)& 29& (0, 0), (1, 0), (25, 1)&$ {1}/{24}$\\
                 \hline
                 (56, 845, 3)& 17 &$(0, 0), (34, 2), (51, 4), (56, 5)$&$ {1}/{5}$\\
                 \hline
                 (65, 167, 3)& 29& (0, 0), (58, 2), (65, 3)&${1}/{7}$\\
                 \hline
            \end{tabular}}
        \end{center}
    Now consider $ n \leq 10$ and $ n = 16$ when $ k = 3$. We again use Lemma  \ref{n+a- 1<=k<=5} as before. Since $P(\Delta(n-2k+1,k))< 37$ for these $n$, we should have $P(\Delta(n+a-k+1,k))<37$ as well. That is, there are $228$ pairs of admissible values of $(n,a)$  for $k=3$, $36$ pairs of $(n,a)$ for $k=4$ and $2$ pairs of $(n,a)$ for $k=5$. After excluding the values of $k$ from the triplets, this leaves a total of $97$ distinct pairs $(n,a)$. Since the corresponding values of $n$ are small, and the factorisation of $\mathcal{L}_n^{(a)}(x)$ is readily computable for such $n$, we explicitly computed the factorisations for these $97$ pairs and verified that, in each case, the polynomial is irreducible.
    \end{proof}

    \begin{lemma}\label{nofactor2}
        Let $n\geq 4$ and $50<a\leq 1000$. Then $\mathcal{L}_n^{(a)}(x)$ has no factor of degree $2$.
    \end{lemma}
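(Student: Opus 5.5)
We argue by contradiction and follow the same pattern as Lemma \ref{nofactor3-5}, now with $k=2$. Assume that $\mathcal{L}_n^{(a)}(x)$ has a factor of degree $2$ with $n\geq 4$ and $50<a\leq 1000$.

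\textbf{Step 1: small primes dividing $n(n-1)$.} By Lemma \ref{pmax1+2k}, $P(n(n-1))\leq 31$. Lemma \ref{lemmak=1to5} with $k=2$ gives more:
\begin{itemize}
\item $\omega_7(n(n-1))\leq 1$;
\item every prime $11\leq p\leq 31$ divides $n(n-1)$ exactly once and satisfies $p^2\parallel(a+2)$;
\item if $7\mid n(n-1)$, then $7\parallel n(n-1)$, and either $7^2\mid(a+2)$ or $7^3\parallel(a+4)$.
\end{itemize}
Lehmer's Table IA lists all $m$ with $P(m(m-1))\leq 31$. From it we extract the finitely many $n\geq 4$ satisfying these restrictions. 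Since $\omega_7\leq 1$ and the exponent of any prime $\geq 7$ is $1$, the product $n(n-1)$ is essentially a $\{2,3,5\}$-smooth number times at most one prime $\geq 7$, so the list is short.

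\textbf{Step 2: restricting $a$.} For each surviving $n$ with a prime $p\geq 7$ dividing $n(n-1)$, we use the congruence $p^2\mid(a+2)$ (or $7^3\parallel(a+4)$) to restrict $a$ to a small set. When $n(n-1)$ is $\{2,3,5\}$-smooth, $a$ is not restricted this way. In that case we instead apply Lemma \ref{n+a- 1<=k<=5}$(i)$,$(iii)$:
\begin{itemize}
\item every prime $p\geq 11$ dividing $(n+a)(n+a-1)$ must divide $\Delta(n-5,5)$;
\item moreover $p\mid(a+1)(a+2)$ or $p^2\mid(a+3)(a+4)$.
\end{itemize}
Thus $P\bigl((n+a)(n+a-1)\bigr)\leq P(\Delta(n-5,5))$, and a sweep over $50<a\leq1000$ for each candidate $n$ leaves only a finite list of pairs $(n,a)$.

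\textbf{Step 3: the primes $2,3,5$.} For these we can also bring in Lemma \ref{slope1} with $c=2$ where it applies (e.g.\ $p=5$ gives $p\geq 1+2k$), together with Lemma \ref{v0gequ0+1}. Lemma \ref{x2neqp} is not available here because it needs $p\geq 11$. These lemmas force $v_0\geq u_0+1$ and $k=2\in\{\beta s,\dots,\beta s+l\}$, which further prunes the list.

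\textbf{Step 4: the remaining pairs.} For each leftover pair $(n,a)$ we exhibit a prime $p$ whose Newton polygon has right-most edge of slope $<\tfrac12$, and conclude by Lemma \ref{fil1}, as in the table of Lemma \ref{nofactor3-5}. Alternatively, when $n$ is small, we factor $\mathcal{L}_n^{(a)}(x)$ directly in SageMath.

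The main obstacle is the case where $n(n-1)$ is $\{2,3,5\}$-smooth, such as $n=4,5,6,9,10,16,25,81$. There Lemma \ref{lemmak=1to5} puts no constraint on $a$, so we depend on the large-prime argument applied to $(n+a)(n+a-1)$. Because $P(\Delta(n-5,5))$ is small for these $n$, this should eliminate most $a$. The residue, which includes $n=4$ with its known linear-factor cases $a=5,23$, will require direct factorisation checks. For those cases we must verify that there is no quadratic factor, as opposed to linear ones, for example by checking the Newton polygon at $p=3$ or $p=2$ or by explicit factorisation.
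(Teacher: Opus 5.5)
Your proposal is correct and follows essentially the paper's route. You get $P(n(n-1))\le 31$ from Lemma \ref{pmax1+2k}, then use Lehmer's tables with the $k=2$ restrictions of Lemma \ref{lemmak=1to5} to reduce to finitely many $(n,a)$, sieve with Lemma \ref{n+a- 1<=k<=5}$(i),(iii)$, and finish with direct factorisation or Newton polygons of right-most slope $<\tfrac12$ (the paper's survivors are $(36,339)$ and four pairs with $n=81$). For the $5$-smooth $n\neq 81$, the sharper conclusion $p\mid\Delta(n-3,2)$ of part $(i)$ gives $P\bigl((n+a)(n+a-1)\bigr)<37$, whereas your bound via $\Delta(n-5,5)$ is vacuous for $n\le 5$. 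Your Step 3 is unnecessary: for $p=5$, $v_0=1$ the hypothesis of Lemma \ref{slope1} fails once $a>70$. Also, $(4,5)$ and $(4,23)$ lie outside $50<a\le 1000$, so they never arise.
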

    \begin{proof}
    Suppose that $\mathcal{L}_n^{(a)}(x)$ has a
    factor of degree $2$.   From  \cite[Table IA]{lehmer} we have $P(n(n-1))> 31$ for $n > n_2$, where $n_2=1611308700$. 
    Hence, by Lemma \ref{pmax1+2k}, it follows that $n < n_2$. Let $n > 10^5$. Now we consider $n$ such that $P(n(n-1))\leq 31$. For such $n$, we refer from \cite[Table IB]{lehmer} that $\omega_{11}(n(n-1))\geq 2$. Hence by Lemma \ref{lemmak=1to5},  we can assume that $n\leq 10^5, P\bigl(n(n-1)\bigr) \leq 31$ and $   \omega_{7}\bigl(n(n-1)\bigr) \leq 1.$ Using \cite[Table IA]{lehmer}, we compile a set consisting of $59$
    such integers $n$. 

    Let $11\leq P(n(n-1))\leq 31$.
    By Lemma \ref{lemmak=1to5}, if $p \geq 11$ is a prime divisor of
    $n(n-1)$, then $p^2 \mid (a+2)$ and $
    \nu_p\bigl(n(n-1)\bigr)=1$. There are $32$ such possible values of $n$. For each such $n$, we determine all integers $a$ such that for every prime $p\geq 11$ satisfying  $p\mid n(n-1)$, we have $p^2\mid (a+2)$. This gives 118 pairs $(n,a)$. We then filter these pairs using Lemma \ref{n+a- 1<=k<=5}$(i)$ and $(iii)$. It can be checked that, for all these pairs, $P((n+a)(n+a-1))\geq 11$. We first consider the pairs $(n,a)$ for which $P((n+a)(n+a-1))\geq 37$. There are $106$ such pairs. By Lemma \ref{n+a- 1<=k<=5}$(i)$, we have $P((n+a)(n+a-1))\leq P((n-3)(n-2))$. Only $8$ pairs of $(n,a)$ satisfy this condition. Furthermore, if $p\geq 37$ and $p\mid(n+a)(n+a-1)$, then Lemma \ref{n+a- 1<=k<=5}$(i)$ gives $p\mid(a+1)(a+2)$. However, none of the $8$ pairs satisfies this additional condition. Hence, it remains to consider the 12 pairs of $(n,a)$ for which $P((n+a)(n+a-1))\leq 31$. We now consider the prime divisors $p$ of $(n+a)(n+a-1)$ with $11\leq p\leq 31$. If $11\leq p\leq 31$ and $p\mid(n+a)(n+a-1)$, then Lemma \ref{n+a- 1<=k<=5}$(iii)$ gives $p\mid(a+1)(a+2)$ or $p^2\mid(a+3)(a+4)$. But none of these pairs satisfies this condition.  Let $P(n(n-1))=7$. Using \cite[Table IA]{lehmer}, there are exactly $13$ such integers $n$. By Lemma \ref{lemmak=1to5}, any corresponding $a$ must satisfy either $7^2 \mid (a+2)$ or $7^3 \parallel (a+4)$. Among the integers $a$ with $50 < a \leq 1000$, precisely $21$ values satisfy this condition. Consequently, we are left with $273$ admissible pairs $(n,a)$. We apply Lemma \ref{n+a- 1<=k<=5}$(i), (iii)$ similar to the case $P(n(n-1))\geq 11$ and obtain the pair  $(n, a) = (36,339)$.

     Let $P(n(n-1))\leq 5$. Then, $n\in\{81, 25, 16, 10, 9, 6, 5, 4\}$. We first consider $n=81$. Repeating the above process using Lemma \ref{n+a- 1<=k<=5}, we obtain $(n,a)\in\{(81, 115),(81, 235), (81, 544), (81, 648)\}$. Thus, there are $4$ admissible pairs $(n, a)$ corresponding to $n=81$. Since \(P(\Delta(n-3,2))<37\) for all \(n\neq 81\), Lemma \ref{n+a- 1<=k<=5}$(i)$ implies that $
    P\bigl((n+a)(n+a-1)\bigr)<37$ for all \(n\neq 81\). Repeating the above process, we are left with $51$ pairs of $(n,a)$ satisfying $P(n(n-1))\leq 5$ and $n\neq 81$. For these $51$ small cases, since $n$ is small, we directly compute and verify that $\mathcal{L}_n^{(a)}(x)$ has no factor of degree $2$.
The only remaining cases are $\{(36, 339), (81, 115),(81, 235),  (81, 544), (81, 648)\}$. For each of these cases, we construct the Newton polygon with respect to a suitable prime and conclude by Lemma \ref{fil1} that $\mathcal{L}_n^{(\alpha)}(x)$ has no factor of degree $2$. 
     \begin{center}
        \scalebox{0.7}{\begin{tabular}{|c|c|c|c|}
        \hline
        $(n,a)$& $p$ & Vertices of $NP_p\left(\mathcal{L}_n^{(a)}(x)\right)$& Slope of the right-most edge  \\
        \hline
        $(36, 339)$ & $3$ & $(0, 0), (9, 4), (36, 17)$ & ${13}/{27}$\\
        \hline
       $(81, 115)$ &$7$&$(0, 0), (49, 8), (77, 13), (81, 14)$ &$1/4$\\
        \hline
        $(81, 235)$ &$7$&$(0, 0), (1, 0), (8, 1), (15, 2), (22, 3), (71, 11), (78, 13), (81, 14)$ &${1}/{3}$\\
        \hline
        $(81, 544)$ &$3$&$(0, 0), (81, 40)$ &${40}/{81}$\\
        \hline
        $(81, 648)$ &$7$&$(0, 0), (1, 0), (8, 1), (15, 2), (22, 3), (29, 4), (78, 13), (81, 14)$ &${1}/{3}$\\
        \hline
        \end{tabular}}
        \end{center}    
    \end{proof}
\section{\texorpdfstring{Proof of Theorem \ref{main} with $k=1$}{Proof of Theorem \ref{main} with k=1} }
    \noindent
    Assume that $\mathcal{L}_n^{(a)}(x)$ has a linear factor. Recall that $r_1$ is the length of the right-most edge in the $NP_p\left(\mathcal{L}_n^{(a)}(x)\right)$. In this section, we derive conditions on the primes $p$ dividing $n$.
     For every $p\mid n(n+a)$ by \eqref{phij>j/k} we have, 
    \begin{align}\label{phij>j}
    \nu_p(\Delta(a+1,r_1))\geq \nu_p(\Delta(a+1,r_1))-\nu_p\left(\binom{n}{r_1}\right)\geq r_1.        \end{align}

    \begin{lemma}\label{lemmak=1}
    Let $n\geq 2$ and $50<a\le 1000$. Suppose that $\mathcal{L}_n^{(a)}(x)$ has a factor of degree $k=1$. Let $p$ be a prime with $p\mid n$. Then $p\leq 31$ and the following holds:
    \begin{enumerate}[$(i)$]
    \item If $11\leq p\leq 31$, then 
    $p^2\parallel (a+1)\ \text{and} \ \nu_p(n)=1.$
    \item 
    If $p=7$ or $p=5$, then 
    $p^{j+1}\mid(a+j),$ and $\begin{cases}
    \nu_p(n)\leq \nu_p(a+1)-1,&j=1,\\
    \nu_p(n)=\nu_p(a+j)-j,&2\leq j\leq j_p,
    \end{cases}$
    where $j_7=3$ and $j_5=4$.
    
    \item If $p=3$, then one of the following holds:
    \begin{align*}
    & 3\parallel (a+1),\ 
    \nu_3(n)\le \nu_3(a+4)-3\ \text{and}\ 
    \nu_3(n)\le \max\{\nu_3(a+4),\nu_3(a+7)\}-5,\quad\text{or}\\
    &3^2\mid (a+1)\quad \text{and}\quad \nu_3(n)\le \nu_3(a+1)-1;\\
    & 3\parallel (a+2)\  \text{and}\ \nu_3(n)\le \nu_3(a+5)-4, \quad\text{or}\quad 3^3\mid (a+2)\quad \text{and}\quad \nu_3(n)= \nu_3(a+2)-2;\\ &3\parallel (a+3)\ \text{and}\ \nu_3(n)\le \nu_3(a+6)-4\quad\text{or}\quad 3^3\mid (a+3)\quad \text{and}\quad \nu_3(n)= \nu_3(a+3)-2,\quad\text{or}\\
    &3^6\mid (a+3)\quad \text{and}\quad \nu_3(n)= \nu_3(a+3)-5.
    \end{align*}
    \end{enumerate}
    \end{lemma}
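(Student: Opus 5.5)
We specialise the machinery of Section 4 to $k=1$. For $k=1$, $\Delta(n-k+1,k)=n$, so every prime $p\mid n$ divides $\Delta(n,1)$. Hence Lemma \ref{pmax1+2k} applies (with $\max\{1+2k,37\}=37$) and gives $p\le 31$ at once.

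\emph{Primes $11\le p\le 31$.} Here $p\ge \max\{1+2k,11\}$, so Lemma \ref{x2neqp} applies. It gives $l=0$ (since $l\le k-1=0$), $j_0\ge l+1$, $v_0=2$, $u_0=1$, and $k\in\{j_0,\dots,j_0+l\}$. This forces $j_0=1$. So $p^2\parallel(a+1)$ and $\nu_p(n)=u_0=1$, which is $(i)$.

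\emph{Primes $p=5,7$.} We want to apply Lemma \ref{v0gequ0+1}, so we first check \eqref{ck}. With $k=1$ take $c=p-1$, i.e. $c=4$ for $p=5$ and $c=6$ for $p=7$.
\begin{itemize}
\item By \eqref{c0 def} and $a\le1000$ we get $c_0=4$ for $p=5$ and $c_0=3$ for $p=7$, so $\frac{c-1}{p-1}(p^{v_0}+p)\ge c_0$ holds.
\item Remark \ref{remark}$(i)$ gives $\alpha=\nu_p(t_0+1)\le c_0-v_0$. This makes $(p-1)\alpha\le p^{v_0}(c-1)$ automatic.
\end{itemize}
Lemma \ref{v0gequ0+1} then gives $j_0\ge 1$ (trivially, since $l=0$), $v_0\ge u_0+1$ and $r_1=j_0$. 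By Remark \ref{remark}$(iii)$, $\phi^{(1)}=\frac{v_0-u_0}{j_0}$ when $j_0\ne p$, and $\frac{v_0-u_0+1}{p}$ when $j_0=p$. The condition $\phi^{(1)}\ge 1$ reads $v_0-u_0\ge j_0$.
\begin{itemize}
\item Since $j_0\le p$ and $p^{v_0}\le a+j_0\le 1000+p$, we have $v_0\le 4$ for $p=5$ and $v_0\le 3$ for $p=7$. Combined with $u_0\ge1$, this bounds $j_0\le j_p$.
\item For $j_0=1$ we only get $\nu_p(n)=u_0\le v_0-1=\nu_p(a+1)-1$, with $p^2\mid(a+1)$.
\item For $j_0\ge 2$ I expect to pin down the equality $\nu_p(n)=\nu_p(a+j)-j$. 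The lattice/degree argument of Lemma \ref{v0gequ0+1} should do this: since $l=0$, $k=1$ must equal $\beta s$ with $s$ the denominator of $\frac{v_0-u_0}{j_0}$. So $s=1$, i.e. $j_0\mid v_0-u_0$. Together with the small range of $v_0$, this should leave only $v_0-u_0=j_0$.
\end{itemize}
I would tabulate the cases to confirm this, since the slope argument alone only yields $\ge$.

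\emph{The prime $p=3$.} This is the main obstacle. Lemma \ref{slope1} needs $p\ge 1+2k=3$ with $c=2$, which is fine, but $c_0$ is now $6$ for $a\le1000$. The hypothesis $\frac{1}{2}(3^{v_0}+3)\ge 6$ fails when $v_0=1$, and the $\alpha$-condition $2\alpha\le 3^{v_0}$ can fail too. So I would split on $j_0\in\{1,2,3\}$ and on whether $v_0=1$ or $v_0\ge2$ (respectively $\ge3$).
\begin{itemize}
\item \emph{$v_0$ large:} argue as for $5,7$, including the exceptional $r_1=p$ case of Lemma \ref{v0gequ0+1} (for $j_0=2$, $v_0-u_0=2$). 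This yields the alternatives $3^2\mid(a+1)$, $3^3\mid(a+2)$, and $3^3$ or $3^6\mid(a+3)$, with the stated equalities for $\nu_3(n)$. Here I again use that $k=1$ forces $s=1$, so $j_0\mid v_0-u_0$.
\item \emph{$v_0=1$:} abandon Lemma \ref{slope1}. Instead use directly $r_1<12$ (from the proof of Lemma \ref{n+a- 1<=k<=5}) and \eqref{phij>j}, i.e. $\nu_3(\Delta(a+1,r_1))-\nu_3\binom{n}{r_1}\ge r_1$. For each $j_0$ I would enumerate the candidate $r_1\in\{j_0+3,j_0+6,\dots\}$ at which the next multiple of $3$ carries a large valuation. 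The contribution of $\nu_3\binom{n}{r_1}$, roughly $\nu_3(n)$ minus small terms by Lemma \ref{lucas}/Legendre (Lemma \ref{sigma}), must be absorbed. This gives the inequalities $\nu_3(n)\le\nu_3(a+4)-3$, $\nu_3(n)\le\max\{\nu_3(a+4),\nu_3(a+7)\}-5$, $\nu_3(n)\le\nu_3(a+5)-4$, and $\nu_3(n)\le\nu_3(a+6)-4$.
\end{itemize}
Getting the exact constants in the $v_0=1$ case requires careful bookkeeping of $\nu_3\binom{n}{r_1}$ for $r_1$ up to $11$; that is where I expect the work to concentrate.
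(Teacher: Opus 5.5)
\textbf{Gap at $p=3$, $v_0\ge 2$, $j_0=2$.} Your handling of $p\le 31$, of $(i)$ via Lemma~\ref{x2neqp}, and of $p\in\{5,7\}$ via Lemma~\ref{v0gequ0+1} is essentially the paper's argument, but the $p=3$, $v_0\ge 2$ part breaks down at $j_0=2$.

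Your deduction ``$k=1=\beta s$, hence $s=1$'' is the last assertion of Lemma~\ref{v0gequ0+1}. That assertion requires $r_2\neq p$, i.e.\ alternative $(a)$ of Lemma~\ref{slope1}$(ii)$. When $j_0=2$ and $r_1=j_0$ this never holds. By \eqref{phijj'}, $\phi_{(3,2)}(3)=\nu_3\bigl(3(a+3)/(n-2)\bigr)=1$, since $3\mid(a+2)$ and $3\mid n$. So $\phi^{(2)}\ge 1$, and you are in alternative $(c)$ with $r_2=p=3$.

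That second edge supplies a lattice segment of length $1$. By Lemma~\ref{cor-dumas}, a linear factor is then compatible with any denominator of $\phi^{(1)}=\frac{v_0-u_0}{2}$. The only usable information is convexity, $\frac{v_0-u_0}{2}>1$; equality is the exceptional collinear case $r_1=p$, which you did include. This is the paper's Case~II, which your plan omits. It gives $3^4\mid(a+2)$ and only $\nu_3(n)\le\nu_3(a+2)-3$.

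Your parity conclusion $2\mid v_0-u_0$ would wrongly discard, for example, $a=160$ (where $3^4\parallel(a+2)$) with $\nu_3(n)=1$. That case survives the $3$-adic test, with $\phi^{(1)}=\frac32$ and $\phi^{(2)}=1$.

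For the same reason you cannot reach the equality $\nu_3(n)=\nu_3(a+2)-2$ that you say you will obtain. For $j_0=2$ the $3$-adic polygon yields only $3^3\mid(a+2)$ and $\nu_3(n)\le\nu_3(a+2)-2$. That is also all the paper's proof delivers: its subcases give $\nu_3(a+2)-2$, $\nu_3(a+2)-4$ and $\le\nu_3(a+2)-3$.

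\textbf{Smaller points.}
\begin{itemize}
\item For $p\in\{5,7\}$ you use the $s=1$ argument without checking $r_2\neq p$. The check is easy but must be made, as the paper does: by Lemma~\ref{slope1}$(ii)(c)$, $r_2=p$ forces $j_0=p-1\in\{4,6\}$, whereas $j_0\le v_0-u_0\le 3$ (resp.\ $\le 2$).
\item For $j_0=p=3$ the condition is $3\mid (v_0-u_0+1)$, not $j_0\mid (v_0-u_0)$. Your stated outcome, $3^3$ or $3^6$ dividing $a+3$, matches the former.
\item In the $v_0=1$ bookkeeping you must treat every $r_1$ with $j_0+3\le r_1<12$, not only $r_1\equiv j_0\pmod 3$. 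The paper first reduces to $r_1<\max\{j_0+6,8\}$. For $j_0=3$ it then uses $\nu_3(a_6)<\nu_3(a_7)=\nu_3(a_8)$ to force $r_1=6$.
\item For $3\parallel(a+1)$, splitting into $r_1\le 6$ versus $r_1=7$ gives a disjunction: $\nu_3(n)\le\nu_3(a+4)-3$ or $\nu_3(n)\le\max\{\nu_3(a+4),\nu_3(a+7)\}-5$. This is what the paper's proof actually shows, so do not expect both inequalities to hold.
\end{itemize}
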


    \begin{proof} By Lemma \ref{pmax1+2k}, we have $p\leq 31$. We now consider the possible values of $p.$

    \noindent
    $(i).$ Suppose $11 \leq p \leq 31$. Then the assertion  follows directly from Lemma \ref{lemmak=1to5}.\\
    \noindent
    $(ii).$   Let $p\in\{5, 7\}$. We consider the conditions of Lemma \ref{slope1}. Since $k=1$ and $p\mid n$, we have $l=0$ and $c=p-1.$ Further, since $a \leq 1000,$ we have $c_0\leq p-1$ and hence $\frac{c-1}{p-1}(p^{v_0}+p)\geq \frac{2p(p-2)}{p-1}>c_0$. Let $\alpha=\nu_p(t_0+1)$. Then by Remark \ref{remark}$(i)$, $\alpha\leq c_0-v_0$. Therefore, $\alpha\leq c_0-v_0\leq p-2\leq \frac{c-1}{p-1}p^{v_0}$. 
    Thus by Lemma \ref{v0gequ0+1}, we have $r_1=j_0$, $v_0\geq u_0+1$ and $j_0\geq l+1$. Hence $\nu_p(\Delta(a+1,p))=\nu_p(a+j_0)=v_0\geq 2$. Since $a\leq 1000$, we have
    \begin{align}\label{equation-v0u0}
    \begin{cases}
    & 2\leq v_0\leq 3 \quad\text{ and }\quad 1\leq v_0-u_0\leq 2 ,\quad\text{ when }\quad p=7, \\
    & 2\leq v_0\leq 4\quad \text{ and }\quad 1\leq v_0-u_0\leq 3,\quad\text{ when }\quad p=5 .
    \end{cases}
    \end{align}
    By Lemma \ref{v0gequ0+1}, we have $r_1=j_0$. Then from \eqref{phij>j} and Remark \ref{remark}$(iii)$, we have $j_0\leq v_0-u_0.$
    Assume that $r_2=p$. Then by Lemma \ref{slope1}, we have $
    l+1\leq j_0<p\leq j_0+1.$ Thus, $p=j_0+1$. As $c_0\leq p-1$, we have $v_0\leq p-1$ and hence $j_0\leq p-2$, which is a contradiction.
    Therefore $r_2 \neq p$ for $p\in\{5,7\}$.  Let $p=7$. Since $r_2 \neq p$,  Lemma \ref{v0gequ0+1} applies.
    If $(v_0-u_0,j_0)=1$, then, writing $\frac{v_0-u_0}{j_0}=\frac{r}{s},$ we have \(s=j_0\). Since $k=1$ and $l=0$, we derive $j_0=1$. In this case, since $v_0\geq u_0+1$, we get from \eqref{phij>j}, $7^2\mid(a+1)\quad\text{and}\quad\nu_7(n)=u_0\leq v_0-1=\nu_7(a+1)-1.$ Let $(v_0-u_0,j_0)>1$. Thus $v_0-u_0 =2$ from \eqref{equation-v0u0} implying $j_0=2$ since $j_0\leq v_0-u_0$. By \eqref{equation-v0u0}, this gives $v_0=3, u_0=1.$ Hence $7^3\parallel(a+2) \ \text{and} \ \nu_7(n)=\nu_7(a+2)-2=1.$ This proves the assertion for \(p=7\). The case $p=5$ is similar. Since $r_2\neq p$, Lemma \ref{v0gequ0+1} applies. Using $j_0\leq v_0-u_0$ and \eqref{equation-v0u0}, we obtain the possible cases $j_0=1,\quad (v_0-u_0,j_0)=(2,2),\quad \text{or}\quad (v_0-u_0,j_0)=(3,3).$ These give, respectively, $5^2\mid(a+1), 5^3\mid(a+2),5^4\parallel(a+3),$ together with the corresponding assertions on \(\nu_5(n)\). \    
    
    \noindent
    $(iii).$ Let $p=3$. Since $a \leq 1000$ and $j_0 \leq 3$ we have $\nu_3(\Delta(a+1,3))=\nu_3(a+j_0)=v_0\leq 6.$
    We first assume that $v_0\geq 2$. Since $k=1,$ we have $c=2.$ Also, $c_0 \leq 6$ and $p^{v_0}+p\geq p^2+p=12$.
    Hence $\frac{c-1}{p-1}(p^{v_0}+p)\geq \frac{12}{p-1}=6\geq c_0.$ Let $\alpha=\nu_p(t_0+1)$. If $p^{v_0}(c-1)<\alpha(p-1)$, then by Remark \ref{remark}$(i)$,  $c_0-v_0\geq\alpha>p^{v_0}\frac{c-1}{p-1}=\frac{p^{v_0}}{p-1}\geq \frac{9}{2},$ which is impossible since $v_0\geq 2$ and $c_0-v_0\leq 4$. Hence $\alpha(p-1)\leq p^{v_0}(c-1)$ and by Lemma \ref{v0gequ0+1}, $v_0\geq u_0+1$, $j_0\geq l+1$ and $r_1=j_0$ except in the exceptional case when $v_0-u_0=2$ and $j_0=2$. Further, $1\leq v_0-u_0\leq 5.$ In the exceptional case when
    $v_0-u_0=2$ and $j_0=2$, we have $
    3^3 \mid (a+2)$ and $ 
    \nu_3(n)=\nu_3(a+2)-2.$ Hence, we may assume from now on that $r_1=j_0.$ We consider the following cases depending on $r_2$.
    
    \noindent
    \textbf{Case I: $r_2\neq p$}. Let $j_0=p$. Since \(p=3\), this means \(j_0=3\). If $(v_0-u_0+1, p)=1$, then $j_0\leq k$ by Lemma \ref{v0gequ0+1}, which is a contradiction. Hence $v_0-u_0+1\in\{3, 6\}$. This gives the following alternatives: $
    3^3 \mid (a+3) \ \text{and} \
    \nu_3(n)=\nu_3(a+3)-2 \ \text{or} \ 3^6\mid (a+3) \ \text{and} \
    \nu_3(n)=\nu_3(a+3)-5.$ Let $j_0=2$. 
    If $v_0-u_0=2$, then $r_1=p$.
    So, let $v_0-u_0\neq 2$. 
    Then $\phi^{(1)}=\frac{v_0-u_0}{j_0}$ by Remark \ref{remark}$(iii)$. 
    Assume that $(v_0-u_0, j_0)=1$. Then $j_0\leq k$, which is a contradiction. Therefore $v_0-u_0=4$ and $3^5\mid (a+2)\ \text{and}\  \nu_3(n)=\nu_3(a+2)-4.$ Let $j_0=1$. Then $
    3^2 \mid (a+1)$ and $
    \nu_3(n)\le \nu_3(a+1)-1.$
    
    \noindent
    \textbf{Case II: \(r_2=p\)}. Then by Lemma \ref{slope1}$(ii)(c)$, \(j_0<p\le j_0+1\) and hence, we must have \(j_0=2\). If $v_0-u_0=1$, then we have $\phi^{(1)}=\frac{1}{2}$ and $\phi_p(p)=\frac{2}{3}$, which is a contradiction to $\phi^{(1)}>{\phi_p(p)}$. Moreover,
    \(v_0-u_0\neq 2\), as that would imply $r_1=p$, and hence \(v_0-u_0\ge 3\). Therefore $
    3^4 \mid (a+2)$ and $
    \nu_3(n)\le \nu_3(a+2)-3.$
    
    Thus, it remains to consider the case $v_0=1.$ Then $3\parallel (a+j_0)$ for some $1\leq j_0\leq 3$. Consider $\phi_j(p)$ as defined in \eqref{phij}.  Since \(3\parallel(a+j_0)\), we have $\phi_j(3)<1\quad\text{for all}\quad j<j_0+3.$ As \(\phi_{r_1}(3)\geq 1\) by \eqref{phij>j}, it follows that $r_1\geq j_0+3.$ We proceed similarly as in the proof of  Lemma \ref{n+a- 1<=k<=5}. Let $r_1=j_0+i_1$, where $i_1\geq 3$. Define $v=\max\{\nu_3(a+j_0+i):0\leq i\leq i_1 \}.$
    Then, $r_1\geq 3^{v}-a$ and
       $ \nu_3(\Delta(a+1, r_1))=\nu_3(\Delta(a+j_0, i_1+1))\leq v+\nu_3(i_1!).$
        By Lemma \ref{sigma}, we have $\phi_{r_1}(3)=\frac{\nu_3(\Delta(a+1,r_1))-\nu_3\left(\binom {n}{r_1}\right)}{r_1}< \frac{v}{r_1}+\frac{\nu_3(i_1!)}{i_1}<\frac{v}{r_1}+\frac{1}{2}.$ 
        Since $\phi_{r_1}(3)\geq 1$, it follows that $r_1< 2v$. Also, since $r_1\geq 3^v-a$, we have $3^v-a< 2v$. As $50< a\leq 1000$, this inequality is possible only when $v\leq 6$. Therefore, $r_1< 12$ and we obtain $j_0+3\leq r_1< 12$. We have $$
        \nu_3(\Delta(a+1,r_1))
        \leq 2+\max\{\nu_3(a+j_0+3),\nu_3(a+j_0+6)\}\leq 8,\ \text{ for} \ \max\{j_0+6, 8\}\leq r_1< j_0+9,\ \text{and}$$
        $$\nu_3(\Delta(a+1,r_1))\leq 3+\max\{\nu_3(a+j_0+3),\nu_3(a+j_0+6),\nu_3(a+j_0+9)\}\leq 9\ \text{for}\ j_0+9\leq r_1< 12.$$ Further, by Lemma \ref{lucas}, $\nu_3\left(\binom{n}{r_1}\right)\geq 1\quad\text{if}\quad r_1=8$. Hence \eqref{phij>j} cannot hold in this range and we can assume that $j_0+3\leq r_1<\max\{j_0+6,8\}$ and $1 \leq j_0 \leq 3$.     Suppose first that $j_0=1$.  Then $4\leq r_1\leq 7$. Further,
        $\nu_3(\Delta(a+1, r_1))=1+\nu_3(a+4)$ for $4\leq r_1\leq 6$ and $\nu_3(\Delta(a+1, r_1))\leq 2+\max\{\nu_3(a+4),\nu_3(a+7)\}$ for $r_1=7$. 
        Moreover, we have $\nu_3\left(\binom{n}{r_1}\right)\geq\nu_3(n)+i,$where $i=0$ when $r_1\in\{4,5, 7\}$ and $i=1$ when $r_1=6$. Therefore \eqref{phij>j} can hold only if  $\nu_3(a+4)-\nu_3(n)\geq 3\ \text { or } \ \max\{\nu_3(a+4), \nu_3(a+7)\}-\nu_3(n)\geq 5.$
        Suppose next that \(j_0=2\). Then \(5 \le r_1 < 8\). Further, $\nu_3(\Delta(a+1, r_1))=1+\nu_3(a+5)$ and $\nu_3\left(\binom{n}{r_1}\right)\geq \nu_3(n)+i$, where $i=0$ when $r_1=5$ and $i=1$ when $r_1\in\{6,7\}$.
        \noindent Therefore \eqref{phij>j} can hold only if  $\nu_3(a+5)-\nu_3(n)\geq 4.$ Suppose $j_0=3$. 
        Then $6\leq r_1<9$. Consider the coefficients of $x^i$ in $\mathcal{L}_n^{(a)}(x)$. We have
        $|a_7|=|a_6|\frac{(n-6)}{7(a+7)}$ and $|a_8|=|a_7|\frac{(n-7)}{8(a+8)}.$ Since $3\mid(n-6)$ and $3\nmid 7\cdot 8(a+7)(a+8)$, we get $\nu_3(a_6)<\nu_3(a_7)=\nu_3(a_8)$. Therefore, since $6\leq r_1<9$, we must have $r_1=6$. Now $\nu_3(\Delta(a+1, 6))=\nu_3(a+3)+\nu_3(a+6)=1+\nu_3(a+6),$ and $\nu_3\left(\binom{n}{6}\right)=\nu_3(n(n-3))-2\geq \nu_3(n)-1.$ 
        Thus, \eqref{phij>j} can hold only if  $\nu_3(a+6)-\nu_3(n)\geq 4.$
    \end{proof}

    \begin{lemma}\label{generalboundp}
        Let $p$ be a prime and $m\geq 1$ be such that $p^m\leq a<p^{m+1}$. If $\nu_p(n)\geq m+1$, then $\phi_j(p)<\frac{1}{p-1}$ for all $1\leq j\leq n$.
    \end{lemma}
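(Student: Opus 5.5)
The plan is to bound the numerator of $\phi_j(p)$ in \eqref{phij} prime power by prime power, using Legendre's formula for $\Delta(a+1,j)=(a+j)!/a!$ and Kummer-type carry counting for $\binom nj$. The goal is to show that the numerator is at most $\nu_p(j!)$, and then conclude from Lemma \ref{sigma} that
$$\phi_j(p)\le \frac{\nu_p(j!)}{j}=\frac{j-\sigma_p(j)}{(p-1)j}<\frac{1}{p-1}.$$
Put $e=\nu_p(n)\ge m+1$. The two expansions to compare are
\[
\nu_p(\Delta(a+1,j))=\sum_{t\ge1}\Bigl(\Bigl\lfloor\tfrac{a+j}{p^t}\Bigr\rfloor-\Bigl\lfloor\tfrac{a}{p^t}\Bigr\rfloor\Bigr),\qquad
\nu_p\Bigl(\tbinom nj\Bigr)=\sum_{t\ge1}\Bigl(\Bigl\lfloor\tfrac{n}{p^t}\Bigr\rfloor-\Bigl\lfloor\tfrac{j}{p^t}\Bigr\rfloor-\Bigl\lfloor\tfrac{n-j}{p^t}\Bigr\rfloor\Bigr).
\]
I will compare them term by term in $t$.

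\emph{Levels $t\le e$.} Here $p^t\mid n$, so the $t$-th binomial term equals $1$ exactly when $p^t\nmid j$, and $0$ otherwise. The $t$-th term of $\nu_p(\Delta(a+1,j))$ counts the multiples of $p^t$ among $j$ consecutive integers. That count is exactly $j/p^t$ when $p^t\mid j$, and at most $\lfloor j/p^t\rfloor+1$ otherwise. Hence at every level $t\le e$ the net contribution (the $\Delta$-term minus the binomial term) is at most $\lfloor j/p^t\rfloor$, which is the $t$-th term of $\nu_p(j!)$.

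\emph{Levels $t>e$.} Since $t\ge m+2$, we have $p^t>p^{m+1}>a$, so $\lfloor a/p^t\rfloor=0$ and the $\Delta$-term is $\lfloor (a+j)/p^t\rfloor$. This exceeds $\lfloor j/p^t\rfloor$ by at most one, and only when adding $a$ to $j$ pushes the sum past a multiple of $p^t$. Write $T$ for the largest $t$ with $p^t\le a+j$. Any excess at these levels then forces $p^T\le a+j<p^{m+1}+j$, so $j>p^T-p^{T-1}$. In other words, $j$ is large, and its base-$p$ digit at position $T-1$ is $p-1$ (or $j\ge p^T$).

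\emph{Main obstacle.} The hard step is absorbing these possible $+1$ excesses at levels $e<t\le T$. I would try two things first.
\begin{enumerate}[(1)]
\item Use the binomial term at the same levels. For $t>e$ the $t$-th term of $\nu_p\binom nj$ is $1$ whenever the lower $t$ digits of $j$ exceed those of $n$. Since the lower $e$ digits of $n$ vanish and $j$ has a large digit in position $T-1$, a carry should occur at each such level exactly when the $\Delta$-term has its excess. The goal is a pairing argument in which the excess at level $t$ is matched by a carry at level $t$.
\item Failing a clean pairing, bound the excess crudely by $T-e$. Then show that $(p-1)(T-e)<\sigma_p(j)$, or more simply that $(T-e)/j$ is smaller than the slack $\sigma_p(j)/((p-1)j)$. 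This uses $j\ge (p-1)p^{T-1}$, which gives $\sigma_p(j)\ge p-1$, together with the fact that the extra carries contribute to $\sigma_p$.
\end{enumerate}

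Once the numerator bound $\le\nu_p(j!)$ (or the strict inequality in (2)) is in hand, the strict inequality $\phi_j(p)<\frac{1}{p-1}$ follows immediately from Lemma \ref{sigma}, because $\sigma_p(j)\ge1$.
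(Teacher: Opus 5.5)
\paragraph{Verdict.} Your approach is viable and differs from the paper's, but as written it has a gap at exactly the step that carries the content of the lemma.

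\paragraph{The gap.} Your level-by-level framework is sound, and your target inequality is true: the numerator of $\phi_j(p)$ is at most $\nu_p(j!)$, which is equivalent to $\nu_p\binom{a+j}{j}\le\nu_p\binom{n}{j}$. Your treatment of the levels $t\le e$ is also correct. The proposal stops, however, at the levels $t>e$. There you only say a carry ``should occur'' and list two strategies without carrying out either.

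Strategy (2) cannot work as stated. For $j=p^T$ with $T>e$ one has $\sigma_p(j)=1\le (p-1)(T-e)$. In particular, $j\ge (p-1)p^{T-1}$ does not give $\sigma_p(j)\ge p-1$. You would have to count only the levels where an excess actually occurs. The detour through $T$ and the size of $j$ is unnecessary.

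\paragraph{The missing step.} Strategy (1) closes the gap exactly, level by level, and the argument is short. Fix $t>e$ and suppose the $\Delta$-term has an excess at level $t$, i.e.\ $\lfloor (a+j)/p^t\rfloor=\lfloor j/p^t\rfloor+1$. Since $a<p^t$, this means $(j\bmod p^t)+a\ge p^t$, so
\[
j\bmod p^t\;\ge\;p^t-a\;>\;p^t-p^{m+1}\;\ge\;p^t-p^{e}.
\]
On the other hand $p^e\mid n$, so $n\bmod p^t$ is a multiple of $p^e$ in $[0,p^t)$. Hence $n\bmod p^t\le p^t-p^e<j\bmod p^t$. As you yourself note, this is precisely the condition for the $t$-th term $\lfloor n/p^t\rfloor-\lfloor j/p^t\rfloor-\lfloor (n-j)/p^t\rfloor$ of $\nu_p\binom nj$ to equal $1$. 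So at every level $t>e$ the net contribution is also at most $\lfloor j/p^t\rfloor$. Therefore the numerator is at most $\nu_p(j!)$, and Lemma \ref{sigma} finishes the proof.

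\paragraph{Comparison with the paper.} The paper avoids level-by-level bookkeeping altogether. It writes $\nu_p(\Delta(a+1,j))=\frac{j+\sigma_p(a)-\sigma_p(a+j)}{p-1}$ and uses only the crude bound $\nu_p\binom nj\ge \nu_p(n)-\nu_p(j)$. It then splits on whether $w=\nu_p(j)$ exceeds $m$. When $w\le m$, the single observation $\sigma_p(a+j)\ge \sum_{i<w} r_i+1$ absorbs every carry of $a+j$ at once, and it is exactly compensated by $\nu_p(n)-w\ge m+1-w$. Your completed route gives the slightly sharper bound $j\phi_j(p)\le\frac{j-\sigma_p(j)}{p-1}$ in all cases. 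The paper obtains only $\frac{j-1}{p-1}$ when $\nu_p(j)\le m$.
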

    \begin{proof}

        For $1\leq j\leq n$, we have $\nu_p\left(\binom nj\right)=\nu_p(\Delta(n-j+1,j))-\nu_p(j!)\geq \nu_p(n)+\nu_p((j-1)!)-\nu_p(j!)=\nu_p(n)-\nu_p(j)$. Thus, from \eqref{phij} and Lemma \ref{sigma}, we have         
        \begin{align*}\label{1/p-1}
        j\phi_j(p)
        \leq\frac{j+\sigma_p(a)-\sigma_p(a+j)}{p-1}-(\nu_p(n)-\nu_p(j))\numberthis.
        \end{align*}
     Let $\nu_p(j)>m$. Then since $p^m\leq a<p^{m+1}$, $\sigma_p(a+j)=\sigma_p(a)+\sigma_p(j)$. Hence, from \eqref{1/p-1},
        $j\phi_j(p)\leq\frac{j-\sigma_p(j)}{p-1}<\frac{j}{p-1}.$ Now we consider the case when $\nu_p(j)\leq m$. Suppose $\nu_p(j)=w$ and let $j=\sum\limits_{l\geq w}^{}t_lp^l$ be the base-$p$ expansion of $j$, where, $0\leq t_l\leq p-1$. Let $a=\sum\limits_{i=0}^{w-1}r_ip^i+\sum\limits_{l=w}^ms_lp^l$
        be the base-$p$ expansion of $a$.
        Then $a+j=\sum\limits_{i=0}^{w-1}r_ip^i+\sum\limits_{l\geq w}(s_l+t_l)p^l,$ where $s_l=0$ for $l>m$, and $\sigma_p(a+j)\geq\sum\limits_{i=0}^{w-1}r_i+1$ implying 
        $\sigma_p(a)-\sigma_p(a+j)\leq\left(\sum\limits_{l=w}^m s_l\right)-1\leq (m-w+1)(p-1)-1.$ Hence, from \eqref{1/p-1} and using $\nu_p(n)\geq m+1$, we get $
            j\phi_j(p)
            \leq\frac{j+(m-w+1)(p-1)-1}{p-1}-(\nu_p(n)-\nu_p(j))
            \leq \frac{j-1}{p-1}<\frac{j}{p-1}.$   
    \end{proof}     
    In the linear case, when $2\mid n$, or  when $3\mid n$ with $3\parallel\Delta(a+1, 3)$, Lemma \ref{slope1} cannot be applied. Therefore, we treat these cases separately. For $\nu_2(n)$, we obtain a better bound directly from the Lemma \ref{generalboundp}, whereas for $\nu_3(n)$, we are able to improve the upper bound given by Lemma \ref{generalboundp}.
    
    \begin{lemma}\label{w-k1}
        Let $n\geq 2$ and $50<a\leq 1000$. If $\mathcal{L}_n^{(a)}(x)$ has a linear factor, then
        \begin{enumerate}[$(i)$]
            \item $\omega_5(n)\leq 1$,
            \item $n$ is of the form $2^\alpha\cdot3^\beta\cdot p^{\delta_p}$, where $\alpha\leq\lfloor\log_2(a)\rfloor\leq 9$, $\beta\leq 5$, $5\leq p\leq 31$ and $\delta_5\leq 3$, $\delta_7\leq 2$,  $\delta_{p}\leq 1$ for $11\leq p\leq 31$,
            \item either $P(n)<11$ or $P(n+a)<11$.
        \end{enumerate}
    \end{lemma}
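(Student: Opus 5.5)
The plan is to derive all three parts from Lemma \ref{lemmak=1} (which already gives $P(n)\le 31$ and controls $\nu_p(n)$ for every odd prime $p\mid n$), from Lemma \ref{generalboundp} (for $p=2$ and for the $3$-part), and from Lemma \ref{n+a- 1<=k<=5}$(ii)$ (for $(iii)$). Throughout, a linear factor means that $\phi^{(1)}\ge 1$ for every prime $p\mid n(n+a)$ by \eqref{phij>j}.

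For $(ii)$, first take $p=2$. Choose $m$ with $2^m\le a<2^{m+1}$, so $m=\lfloor\log_2 a\rfloor\le 9$. If $\nu_2(n)\ge m+1$, Lemma \ref{generalboundp} gives $\phi_j(2)<1$ for all $j$, contradicting \eqref{phij>j}. Hence $\alpha\le m$.

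Next take $p=3$. Every alternative in Lemma \ref{lemmak=1}$(iii)$ bounds $\nu_3(n)$ by $\nu_3(a+j)-c$ for some $j\le 7$ and $c\ge1$. Since $a+j\le 1007<3^7$, we have $\nu_3(a+j)\le 6$, which gives $\beta\le 5$. If some branch only yields $\beta\le 6$, I will fall back on Lemma \ref{generalboundp}: it gives $\beta\le m_3$ with $3^{m_3}\le a<3^{m_3+1}$, so $m_3\le 6$. I would then eliminate $\beta=6$ by checking that $3^6\mid n$ forces $\nu_3(a+j)$ too large in each branch.

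For $p=5$ and $p=7$, Lemma \ref{lemmak=1}$(ii)$ gives $\nu_p(n)\le \nu_p(a+j)-j$ or $\nu_p(n)\le \nu_p(a+1)-1$. Using $\nu_5(a+j)\le 4$ and $\nu_7(a+j)\le 3$ (since $a+j\le 1004$), we get $\delta_5\le 3$ and $\delta_7\le 2$. For $11\le p\le 31$, Lemma \ref{lemmak=1}$(i)$ gives $\delta_p=1$.

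For $(i)$, suppose two distinct primes $p,q\ge5$ divide $n$. By Lemma \ref{lemmak=1}, each imposes $p^{j_p+1}\mid a+j_p$, i.e.\ $p^2\mid(a+1)$ when $p\ge 11$, and similar high-power conditions when $p\in\{5,7\}$. If both conditions fall on the same $a+j$, then $p^2q^2\le a+j$. The only product this small is $25\cdot 49>1004$, so this already fails, as do larger pairs. If the conditions fall on different shifts, for example $5^3\mid a+2$ and $7^2\mid a+1$, I will check the finitely many congruence combinations with $a\le 1000$ directly, in the style of Lemma \ref{equationmm'}. This is the step I expect to need the most care, because the $p=5$ and $p=7$ branches allow $j=1,2,3,4$. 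Still, each combination forces $a$ into a residue class modulo something like $5^{j+1}7^{j'+1}>1000$, and after reducing to $50<a\le1000$ only a handful of $a$ survive, which can then be excluded.

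For $(iii)$, suppose $P(n)\ge 11$ and $P(n+a)\ge 11$. By $(ii)$, $P(n)=p\le31$ and $p^2\parallel a+1$. Let $q=P(n+a)$. If $q\ge 37$, Lemma \ref{n+a- 1<=k<=5}$(i)$ with $k=1$ gives $q\mid\Delta(n-1,1)=n-1$, together with $q\mid a+1$. Then $q\mid(n+a)-(a+1)=n-1$, which is consistent so far. I would add the size constraint: $p^2q\mid a+1$ forces $121\cdot 37>1001$, a contradiction. If $11\le q\le 31$, Lemma \ref{n+a- 1<=k<=5}$(ii)$ gives $q\mid a+1$ or $q^2\mid a+2$. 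The case $q=p$ would give $p\mid a$, which is impossible since $p\mid a+1$. So $q\ne p$. Then either $p^2q\mid a+1$, which exceeds $1001$, or $p^2\mid a+1$ and $q^2\mid a+2$, which is excluded by Lemma \ref{equationmm'}$(i)$ with $N=a+2$. This finishes $(iii)$.
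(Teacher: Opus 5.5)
Your proposal is correct and follows the paper's proof. As in the paper, you obtain (ii) from Lemmas~\ref{lemmak=1} and \ref{generalboundp}, you prove (i) by ruling out two conditions on the same shift $a+j$ by size and conditions on different shifts by a finite check, and you prove (iii) from Lemma~\ref{n+a- 1<=k<=5}$(i),(ii)$ together with Lemma~\ref{equationmm'}$(i)$. The cross-shift enumeration you leave unexecuted is exactly what the paper settles with Lemma~\ref{equationmm'}$(i),(iii)$ plus a direct check of $a\in\{341,684\}$ in the case $7^3\mid a+2$, so you could simply cite these; you also make explicit the step $q\neq p$ in (iii), which the paper leaves implicit.
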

    \begin{proof}
    \noindent
    $(i).$ Let $\omega_5(n)\geq 2$. 
        We first assume that $5\cdot 7\mid n$. By Lemma \ref{lemmak=1}$(ii)$, for the prime $7,$ either $7^2\mid(a+1)$ or $7^3\mid(a+2)$. If $7^2\mid(a+1)$, as $7^2\cdot5^2>1000$, 
        $5^3\mid(a+2)(a+3)$. This contradicts Lemma \ref{equationmm'}. Therefore, we must have $7^3\mid(a+2)$. That is, $a\in\{341, 684\}$. For both of these values of $a$, we have $5^2\nmid(a+1)(a+2)(a+3)$ contradicting Lemma \ref{lemmak=1}$(ii)$. Next, assume that $5\mid n$ and $p \mid n$ for some $p \geq 11$. By Lemma \ref{lemmak=1}$(i)$, we have $p^2 \mid (a+1).$ The alternative $5^2 \mid (a+1)$ is impossible, since $5^2 \cdot p^2 >1000$. Therefore, \ref{lemmak=1}$(ii)$ for $p=5$ gives $5^3 \mid (a+2)(a+3),$ which contradicts Lemma \ref{equationmm'}. It remains to consider the case in which $5 \nmid n$. Since $\omega_5(n) \geq 2,$ we have $\omega_7(n) \geq 2.$  By Lemma \ref{lemmak=1to5}, we also have $\omega_{11}(n)\leq 1$. Hence $7\mid n$ and $p \mid n$ for some $p \geq 11.$ By Lemma \ref{lemmak=1}$(i)$, this gives $p^2 \mid (a+1).$ Since $7^2 \mid (a+1)$ is impossible, Lemma \ref{lemmak=1}$(ii)$ for $p=7$ gives $7^3 \parallel(a+2)$. This contradicts  Lemma \ref{equationmm'}. Therefore $\omega_5(n)\leq 1$.
        
    \noindent
    $(ii).$ This follows from Lemmas \ref{lemmak=1}, \ref{generalboundp} and part $(i)$ of Lemma \ref{w-k1} proved above.
    
    \noindent
    $(iii).$ Let $P(n) \geq 11$ and $P(n+a) \geq 11$. Then there exist primes $p,q \geq 11$ such that $p \mid n, q \mid (n+a)$. By Lemma \ref{lemmak=1} $(i)$, we have $p^2\mid(a+1)$.  If $q>31$, then Lemma \ref{n+a- 1<=k<=5} gives $q\mid (a+1)$. Hence $p^2q \mid (a+1)$ which is impossible since $p\geq11, q>31$ and $a \leq 1000$. Therefore, we may assume that $p,q\leq 31$. By Lemma \ref{n+a- 1<=k<=5}, either $q\mid(a+1)$ or $q^2\mid(a+2)$. If $q \mid (a+1),$ then $ p^2 q \mid (a+1)$, which is impossible since $p^2q \geq 11^3>1000$. Thus $q^2\mid(a+2)$, which  contradicts Lemma \ref{equationmm'}. Therefore either $P(n)<11$ or $P(n+a)<11$.       
    \end{proof}
    Among the pairs $(n,a)$ where $\mathcal {L}_n^{(a)}(x)$ may have a linear factor, there are pairs with large $n$ values for which the slope of the right-most edge is greater than or equal to 1 for any prime that meets the conditions outlined in Lemma \ref{slope1}. This means we cannot conclude anything about the linear factor, and direct polynomial computation is also infeasible. The following discussion draws inspiration from a recent work of Banerjee \cite{banerjee} that uses the theory of roots given by Szeg\`{o} \cite{szego}. Nevertheless, the arguments presented here are carried out independently and are adapted and strengthened to suit the specific needs of our problem. The following lemmas give a lower bound and an upper bound for the roots of $\mathcal L_n^{(a)}(x)$.

    \begin{lemma}\label{szego}
   Let $n,a\in\mathbb Z^+$. All the roots of  $\mathcal {L}_n^{(a)}(x)$ are positive and the largest zero of the polynomial $\mathcal {L}_n^{(a)}(x)$ satisfies $x_n\leq 2n+a+1+\left\{(2n+a+1)^2+\frac{1}{4}-a^2\right\}^\frac{1}{2}.$
    \end{lemma}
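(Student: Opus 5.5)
We plan to use the fact that $L_n^{(a)}(x)$ is a classical orthogonal polynomial for $a>-1$, orthogonal on $(0,\infty)$ with respect to the positive weight $x^a e^{-x}$. Positivity of the roots then follows from the standard theorem on zeros of orthogonal polynomials: all $n$ zeros are real, simple and lie in the interior of the support $(0,\infty)$. Alternatively, and more self-contained, we can note that the coefficients of $\mathcal{L}_n^{(a)}(x)=\sum_{j}(-1)^j a_j x^j$ with $a_j>0$ alternate in sign. Hence $\mathcal{L}_n^{(a)}(-x)$ has all coefficients of the same sign, so there are no nonpositive roots. Reality of the roots still needs orthogonality, or the Sturm sequence given by the three-term recurrence
\[
(n+1)L_{n+1}^{(a)}(x)=(2n+a+1-x)L_n^{(a)}(x)-(n+a)L_{n-1}^{(a)}(x),
\]
whose interlacing argument gives real simple zeros by induction.

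For the upper bound on the largest zero $x_n$, we would use the same recurrence to realise the zeros of $L_n^{(a)}$ as the eigenvalues of the symmetric tridiagonal Jacobi matrix $J_n$. Its diagonal entries are $2i+a+1$ for $0\le i\le n-1$, and its off-diagonal entries are $\sqrt{i(i+a)}$ for $1\le i\le n-1$. Then $x_n=\lambda_{\max}(J_n)$, and we bound it by a Gershgorin-type estimate, or better by the Rayleigh quotient with a carefully weighted inequality. The crude bound is $x_n\le \max_i\bigl(2i+a+1+\sqrt{i(i+a)}+\sqrt{(i+1)(i+1+a)}\bigr)$, which gives roughly $2n+a+1+2\sqrt{n(n+a)}$. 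We would compare this with the target expression $2n+a+1+\sqrt{(2n+a+1)^2+\tfrac14-a^2}$: since $(2n+a+1)^2-a^2=(2n+1)(2n+2a+1)$, the target square root is about $2\sqrt{(n+\tfrac12)(n+a+\tfrac12)}$, which dominates $\sqrt{(n-1)(n-1+a)}+\sqrt{n(n+a)}$. So we would verify the inequality
\[
\sqrt{i(i+a)}+\sqrt{(i+1)(i+1+a)}+2i-2n\le \sqrt{(2n+1)(2n+2a+1)+\tfrac14}
\]
for $0\le i\le n-1$, using AM--GM/concavity of $t\mapsto\sqrt{t(t+a)}$. In the Gershgorin rows the off-diagonal terms involve indices $i$ and $i+1$, each at most $n-1$, so the row sum is at most $2n-1+a+2\sqrt{(n-1)(n-1+a)}$, comfortably below the bound.

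The main obstacle is matching the precise constant $\tfrac14$ in the stated bound, which is the form of Szeg\H{o}'s estimate. If the Gershgorin computation does not visibly produce exactly this expression, we would simply show our bound is at most it, as sketched, since only an upper bound is needed. Failing that, we would cite Szeg\H{o}'s theorem on the zeros of Laguerre polynomials \cite{szego} directly, as the surrounding text suggests, after checking that its hypotheses ($a>-1$) hold for $a\in\mathbb{Z}^+$.
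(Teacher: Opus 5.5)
Your argument is correct, but it is not the paper's route. The paper proves nothing here beyond quoting Szeg\H{o}. Positivity comes from Theorem 6.31.3, a positive lower bound for the smallest zero in terms of the first positive zero of the Bessel function $J_a$. The upper bound, including its constant $\tfrac14$, is exactly Theorem 6.31.2.

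You instead identify the zeros with the eigenvalues of the real symmetric Jacobi matrix $J_n$. To make this step airtight, set $\hat L_i=(-1)^i i!\,L_i^{(a)}$. The recurrence then becomes $\hat L_{i+1}=(x-(2i+a+1))\hat L_i-i(i+a)\hat L_{i-1}$, which is the determinant recurrence for $\det(xI-J_i)$. Hence $\det(xI-J_n)=(-1)^n\mathcal{L}_n^{(a)}(x)$.

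This identification gives the whole lemma:
\begin{itemize}
\item Symmetry of $J_n$ already gives real zeros, so orthogonality or a Sturm argument is not needed separately.
\item Sign alternation then gives positivity. Alternatively, AM--GM gives $\sqrt{m(m+a)}<m+\tfrac a2$ for $a\ge 1$, which places every Gershgorin disc inside $(0,\infty)$.
\item Gershgorin gives $x_n\le 2n+a-1+2\sqrt{(n-1)(n+a-1)}$. Since $(2n-2)(2n+2a-2)<(2n+1)(2n+2a+1)=(2n+a+1)^2-a^2$, this is strictly below the stated bound.
\end{itemize}
So the ``obstacle'' of reproducing the $\tfrac14$ does not arise, and your fallback to citing Szeg\H{o} is unnecessary.

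The trade-off is this. The paper's proof is a one-line citation of nontrivial analytic results on Laguerre and Bessel zeros. Yours is self-contained linear algebra, and it even gives a marginally sharper bound: for $(n,a)=(6656,844)$ it yields about $28283$ instead of the $28288$ used in Section 6, which would only shrink the later root searches.
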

    \begin{proof}
        By \cite[Theorem 6.31.3]{szego}, the least zero of $\mathcal {L}_n^{(a)}(x)$, say $x_1$, is bounded below by  $\dfrac{(j_1/2)^2}{n+(a+1)/2}$, where $j_1$ is the least positive zero of Bessel's function $J_a(x)$. (see  \cite[Equation 1.71.1]{szego}). Since $a>0$ we have $x_1>0$ and hence all the roots of $\mathcal {L}_n^{(a)}(x)$ are positive. The upper bound on $x_n$ follows from  \cite[Theorem 6.31.2]{szego}.
    \end{proof}
\subsection{Proof of Proposition \ref{root}:}
       Let $\xi$ be an integer root of $\mathcal{L}_n^{(a)}(x)$. Then
    \begin{equation}\label{rooteq}
        \xi^n - n(n+a)\xi^{n-1}+\frac{n(n-1)}{2}(n+a)(n+a-1)\xi^{n-2}+ \sum_{j=0}^{n-3}(-1)^ja_j \xi^j =0.
    \end{equation} 
By Lemma \ref{ndivaj}$(i)$, $n\mid a_j$ for $0\leq j\leq n-1$. Thus by \eqref{rooteq}, $n\mid \xi^n$, $(n+a)\mid\xi^n$ and hence $\operatorname{Rad}(n(n+a))\mid \xi$. Similarly, by Lemma \ref{ndivaj}$(i)$, we obtain $(n+a-1)\mid a_j$ and $(n-1)\mid a_j$ for $0\leq j\leq n-2$. 
    Let 
    \begin{align}\label{d1}
    d_1 = \gcd (\xi^{n-1}, n-1),\ n_1= \frac{n-1}{d_1},\quad       d_2 = \gcd (\xi ^{n-1}, n+a-1),\  n_2= \frac{n+a-1}{d_2}.
    \end{align} Let $\gcd(n_1, n_2)=n_0$.  Then $n_0\mid a$. Let $\gcd(n_1,a)=n_0b_1, \gcd(n_2,a)=n_0b_2 \ \text{and} \ n_1=n_0b_1n_3, n_2=n_0b_2n_4.$  Note that $\gcd(b_1n_3, b_2n_4)=1$. Since $n(n+a)=(n+a-1-a+1)(n+a-1+1)$, we have
    \begin{align*}
    \xi^{n-1}(\xi-(a+1))&\equiv\xi^{n-1}(\xi-n(n+a))\pmod{n-1}\equiv 0\pmod {n-1}, \ \text{and} \\
         \xi^{n-1}(\xi-(-a+1))&\equiv\xi^{n-1}(\xi-n(n+a))\pmod {n+a-1}\equiv0\pmod {n+a-1}.
    \end{align*}
Hence 
    \begin{align}\label{xi}
    \xi\equiv a+1\pmod {n_1}\quad \text{and}\quad
    \xi\equiv -a+1 \pmod {n_2}.\end{align}
For $k_1, k_2\in\mathbb Z$, let \begin{align}\label{xi1}
        \xi=a+1+n_1k_1\quad \text{and}\quad\xi=-a+1+n_2k_2.
    \end{align}
Using $\xi=-a+1+n_2k_2$ in  \eqref{xi}, we have $n_2d_2k_2\equiv2ad_2\pmod{n_1}$. Since    $d_2n_2=n+a-1\equiv a \pmod {n_1}$,
    we have $ak_2\equiv 2ad_2\pmod{n_1}$
implying 
    $ k_2\equiv 2d_2\pmod{n_3}.$ Let $k_2=2d_2+n_3k_0$ for some $k_0\in\mathbb Z$. Substituting in \eqref{xi1}, we obtain
    \begin{align}\label{k0}
    \xi&=-a+1+n_2(2d_2+n_3k_0)=-a+1+2(n+a-1)+n_2n_3k_0=2n+a-1+n_2n_3k_0.
    \end{align} Similarly, by using
    $a+1+n_1k_1\equiv -a+1\pmod {n_2}$, we get\begin{align}\label{k0tilde}
    \xi=a+1+n_1(2d_1+n_4\tilde{k_0})=a+1+2(n-1)+n_1n_4\tilde{k_0}=2n+a-1+n_1n_4\tilde{k_0},
    \end{align}
    for some $\tilde{k_0}\in\mathbb Z$. Comparing \eqref{k0} and \eqref{k0tilde}, we get    $$n_2n_3k_0=n_1n_4\tilde{k_0}\implies n_2\frac{n_1}{n_0b_1}k_0=n_1\frac{n_2}{n_0b_2}\tilde{k_0}\implies \frac{{k_0}b_2}{b_1}=\tilde{k_0}.$$
    Since $\gcd(b_1, b_2)=1$, $b_1\mid k_0$. Therefore
    $
 \xi=2n+a-1+n_2n_3k_0=2n+a-1+n_2n_1\frac{k_0}{n_0b_1}
        =2n+a-1+\frac{(n-1)(n+a-1)}{d_1d_2}\gamma,$
    where $\gamma=\frac{k_0}{n_0b_1}\in\mathbb Q$. Further, if $\gcd(n_1, n_2)=1$, then $\gamma\in \mathbb Z$. In that case, $|\gamma|\geq 1$ and $|\xi|\geq \frac{(n-1)(n+a-1)}{d_1d_2}-(2n+a-1)\geq (n+a-1)\left(\frac{n-1}{d_1d_2}-2\right).$ By Lemma \ref{szego}, we have
    $4(n+a-1)>2n+a+1+\left((2n+a+1)^2+\frac{1}{4}-a^2\right)^\frac{1}{2}\geq|\xi|\geq (n+a-1)\left(\frac{(n-1)}{d_1d_2}-2\right),$ implying $d_1d_2>\frac{(n-1)}{6}.$
 \qed

    \begin{note}\phantomsection\label{Remark after Root}
    Note that by Proposition \ref{root}, $\operatorname{Rad}(n(n+a))\mid\xi$ and by \eqref{d1}, $\operatorname{Rad}(d_1d_2)\mid \xi$.
    \end{note}

    \begin{lemma}\label{nolinearfactorn>3}
        Let  $n\geq 2$ and  $50<a\leq 1000$. If $\mathcal{L}_n^{(a)}(x)$ have a linear factor, then $P(n)\leq 3$.
    \end{lemma}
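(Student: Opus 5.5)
We argue by contradiction: suppose $\mathcal{L}_n^{(a)}(x)$ has an integer root $\xi$ and some prime $p\ge 5$ divides $n$. Lemma \ref{w-k1}$(ii)$ gives $n=2^\alpha 3^\beta p^{\delta_p}$ with a single prime $5\le p\le 31$ and the stated exponent bounds. Lemma \ref{lemmak=1}$(i)$,$(ii)$ pins down $a$ in terms of $p$: for $11\le p\le 31$ we have $p^2\parallel(a+1)$, and for $p\in\{5,7\}$ we have $p^{j+1}\mid(a+j)$ for some small $j$. So for each $p$ only a short list of $a\in(50,1000]$ survives, and $n$ ranges over finitely many values ($\alpha\le 9$, $\beta\le 5$). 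This leaves an explicit finite set of pairs $(n,a)$, which I will prune before any computation.

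\textbf{Step 1 (arithmetic pruning).} For each candidate I will apply the constraints already proved.
\begin{enumerate}[$(i)$]
\item Lemma \ref{lemmak=1}$(iii)$ restricts $\nu_3(n)$ in terms of $\nu_3(a+j)$; in particular $3\mid n$ forces a condition such as $3^2\mid(a+1)$ or $3^3\mid(a+2)$.
\item Lemma \ref{w-k1}$(iii)$ requires $P(n+a)<11$ whenever $P(n)\ge 11$.
\item Lemma \ref{n+a- 1<=k<=5}$(i)$,$(ii)$ (with $k=1$) requires every prime $q\ge 37$ dividing $n+a$ to satisfy $q\mid(a+1)$ and $q\mid(n-1)$, and every $11\le q\le 31$ dividing $n+a$ to satisfy $q\mid(a+1)$ or $q^2\mid(a+2)$.
\end{enumerate}
These congruence conditions should cut the list down drastically, because $a+1$ is already largely occupied by $p^2$ (for $p\ge11$) or by a high power of $5$ or $7$.

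\textbf{Step 2 (roots, for surviving large $n$).} For the pairs where $n$ is too large to factor directly, I will use Proposition \ref{root}. Here $\xi$ is divisible by $\operatorname{Rad}(n(n+a))$, and by Lemma \ref{szego} it satisfies $0<\xi<4(n+a-1)$. Writing $\xi=2n+a-1+\frac{(n-1)(n+a-1)}{d_1d_2}\gamma$ with $d_1\mid\gcd(\xi^{n-1},n-1)$ and $d_2\mid\gcd(\xi^{n-1},n+a-1)$, Remark \ref{Remark after Root} gives $\operatorname{Rad}(d_1d_2)\mid\xi$. Since $\operatorname{Rad}(n(n+a))$ is typically comparable to $n+a$, only a few multiples of it lie below $4(n+a-1)$. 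I will enumerate these candidate values of $\xi$ and check them against the congruences $\xi\equiv a+1\pmod{n_1}$ and $\xi\equiv 1-a\pmod{n_2}$. If the gcd condition holds, I will also use $d_1d_2>(n-1)/6$ with $\gamma\neq0$. If instead $\gamma=0$, then $\xi=2n+a-1$, and I will check directly that this is not a root, for instance by reducing modulo $p$, since $p\mid n$ and $p\mid\xi$ must both hold.

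\textbf{Step 3 (remaining cases).} Any leftover pairs with small $n$ are settled either by direct factorisation in SageMath or by exhibiting a prime whose Newton polygon has right-most slope $<1$, as in the tables of Section 5.

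The main obstacle is Step 2 for large $n$, such as $n=2^9 3^5 p$. There the Newton polygon criteria fail and direct computation is infeasible. The crux is showing that the radical and congruence constraints on $\xi$ leave no admissible value. I would first try to use $p\mid\xi$ together with $\xi\equiv a+1\pmod{n_1}$, where $p\nmid a+1$ when $p\in\{5,7\}$ and $j\ge2$. When $p\mid a+1$, I would use the valuation $\nu_p(\xi)$ compared against $\nu_p(a_j)$ in equation \eqref{rooteq}.
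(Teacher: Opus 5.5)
Your plan is the paper's proof. You reduce via Lemmas \ref{w-k1}, \ref{lemmak=1} and \ref{n+a- 1<=k<=5} to a finite list of pairs $(n,a)$ (the set $\tilde S$ in the paper), then dispose of the large-$n$ survivors with Proposition \ref{root}, the bound of Lemma \ref{szego}, $\operatorname{Rad}(n(n+a)d_1d_2)\mid\xi$ and the congruences \eqref{xi}, and settle the rest by direct factorisation. Your mod-$p$ treatment of $\gamma=0$ actually works uniformly: $p\mid n$ and $p\mid 2n+a-1$ force $p\mid a-1$, which is incompatible with $p\mid a+j$ for some $1\le j\le 3$ when $p\ge 5$; this is slightly cleaner than the paper's direct evaluation at $\xi=2770$ for $(896,979)$.
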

    \begin{proof}
        Assume that $P(n)\geq 5$. By Lemma \ref{w-k1}$(ii)$, we know that \begin{align}\label{form of n}n=2^\alpha3^\beta p^{\delta_p},
        \end{align}
        where  $\alpha\leq \log_2(a)$, $\beta\leq 5$ and
        $\delta_p\leq\begin{cases}
                1,&\text{if }\ 11\leq p\leq 31,\\
                2,&\text{if }\ p=7,\\
                3,&\text{if }\ p=5.
            \end{cases}$
        
       Let $11\le p\le 31$. By Lemma \ref{lemmak=1}$(i)$, we have $p^2\parallel (a+1)$. For each prime $p$ with $11\le p\le 31$, this yields a finite set of admissible values of $a$, and for each such $a$, there exist only finitely many possible values of $n$ by \eqref{form of n}. Now we apply Lemma \ref{lemmak=1}$(iii)$ to reduce this set of values of $n$ for each $a$. Further, by Lemma \ref{w-k1}$(iii)$, we have $P(n+a)< 11$, since $P(n)\geq 11$. Consequently, there are only $17$ such pairs $(n,a)$. 
       By Lemma \ref{n+a- 1<=k<=5}\((ii)\), if \(7\mid(n+a)\), then $7^j\mid(a+j)$ for some $1\leq j\leq 3,$ and if \(5\mid(n+a)\), then $5^j\mid(a+j)$ for some $1\leq j\leq 4.$ Further, if $3\mid (n+a)$, we have $r_1< 12$. Thus, using \eqref{phij>j}, we obtain $(n,a)\in\{(22,362),(152,360),(6656,844)\}.$  Let $p\in\{5,7\}$. By  Lemma \ref{lemmak=1}$(ii)$, we have $p^2\mid (a+1)$ or $p^3\mid(a+2)$ or $p^4\mid (a+3)$. This gives a finite set of admissible values of $a$. For each such $a,$ we use \eqref{form of n}, Lemma \ref{lemmak=1}$(iii)$ and Lemma \ref{n+a- 1<=k<=5} for $k=1$ to obtain exceptional pairs of  $(n,a)$. These pairs together with the ones we obtained earlier are given by the set $\tilde{S}$, where $\Tilde{S}:=\{(5,123), (5,724),(10,374),(20,124), (22,362), (25,623), (25,999),(40,248), (40,824),(40,974),\\
        (56,538), 
        (152,360), (160,224),(200,124), (320,724),(400,624), (640,224), (896,979), (6656,844)\}.$
        
    Consider the pair  $(n,a)=(6656,844)\in \tilde{S}$. Let $\xi$ be a root of $\mathcal L_n^{(a)}(x)$. We consider \eqref{equation for root} from Proposition \ref{root}.
    Since  both $n$ and $a$ are even and since $\operatorname{Rad}(n)\mid \xi$, it follows that $\xi$ is even. Moreover, $\gcd(n-1,n+a-1)=1$. Thus, $\gamma\in\mathbb Z\setminus\{0\}$ in \eqref{equation for root}.
   The possible values of $d_1$ and $d_2$ are 
    $d_1\in\{1,5,11,55,121,605,1331,6655\}$ and $d_2\in\{1,7499\}.
    $
    \noindent Also, $d_1d_2>\frac{n-1}{6}=1109$. By Remark \ref{Remark after Root}, $\operatorname{Rad}(n(n+a))\mid\xi$ and $\operatorname{Rad}(d_1d_2)\mid \xi$. On the other hand Lemma \ref{szego} gives $\xi\le 28288$. Since $\operatorname{Rad}(n(n+a))=390$, if $d_2=7499$, then $
    7499\cdot 390>28288\ge \xi$
    which is impossible. Hence $d_2=1$ and consequently, $d_1\in\{1331, 6655\}$. A direct search over $0<\xi\leq 28288$ verifies that no integer $\xi$ of the form \eqref{equation for root} satisfies all of the stated conditions. Therefore, $\mathcal L_n^{(a)}(x)$ has no linear factor in this case. Let $(n,a)=(896,979)$. Suppose that $\xi$ is a root of $\mathcal L_n^{(a)}(x)$.  As in the above case, we consider \eqref{equation for root} from Proposition \ref{root} and we obtain that if $\gamma\neq 0$, no such $\xi$ exists.
    Hence $\gamma=0$ and  $\xi=2770$ by \eqref{equation for root}. However, $\mathcal L_n^{(a)}(2770)\neq 0$ and hence $\mathcal{L}_n^{(a)}(x)$ with $(n,a)=(896,979)$ has no linear factor.
    Let $(n,a)\in\{(200, 124), (320, 724), (400, 624), (640, 224)\}$. Suppose that $\mathcal{L}_n^{(a)}(x)$ has a linear factor and let $\xi$ be a root. Since $\operatorname{Rad}(n)\mid \xi$ and all the above values of $n$ are even, it follows that $\xi$ is even. Moreover, since all the corresponding values of $a$ are even, we have $\gamma\neq 0$. For all the above pairs of $(n,a)$ except $(400, 624)$, we have $\gcd(n+a-1, n-1)=1$. Consider $(n,a)=(400, 624)$. Then the possible values of $d_1, d_2$ are
    $d_1\in\{1, 3, 7, 19, 21, 57, 133, 399\}$ and $d_2\in\{1, 3, 11, 31, 33, 93, 341, 1023\}.$ Further, by Lemma \ref{szego}, we have $0< \xi\leq 2706$.  Among the $128$ possible values of $\xi$ in this range satisfying $\operatorname{Rad} (n(n+a)d_1d_2)\mid\xi$ for the corresponding choices of $d_1$ and $d_2$, a direct check shows that none of them satisfies \eqref{xi}.
    For $(n,a)\in\{(200, 124), (320, 724), (640, 224)\}$ we have $\gamma \in \mathbb{Z}\setminus\{0\}$. As before, we analyse all possible values of $d_1$, $d_2$, and $\xi$. A direct verification shows that no choice of $\xi$ satisfies the required conditions.
    For the remaining values of $(n,a) \in \tilde{S}$, we checked that $\mathcal{L}_n^{(a)}(x)$ are irreducible.
\end{proof}

    \begin{lemma}\label{2^alpha3^beta}
         Let $n\geq 2$ and  $50<a\leq 1000$. Then, $\mathcal{L}_n^{(a)}(x)$ has a linear factor if and only if $n=2$ and $a\in\{50< m\leq 1000: m+2\text{ is  a perfect square}\}$. 
    \end{lemma}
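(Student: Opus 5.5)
The "if" direction is Theorem \ref{n=2,3}$(i)$: for $n=2$, $\mathcal L_2^{(a)}(x)$ has a rational (hence integer) root exactly when $a+2$ is a perfect square. For the converse, assume $\mathcal L_n^{(a)}(x)$ has a linear factor with $n\geq 2$ and $50<a\leq 1000$. By Lemma \ref{nolinearfactorn>3}, $P(n)\le 3$. Lemma \ref{w-k1}$(ii)$ then gives $n=2^\alpha 3^\beta$ with $\alpha\le\lfloor\log_2 a\rfloor\le 9$ and $\beta\le 5$. This leaves a finite, explicit list of at most $60$ values of $n$, each to be paired with $50<a\le 1000$. The task is to show that only $n=2$ survives.

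\textbf{Sieving by $n$.} First I would sieve the pairs $(n,a)$. When $3\mid n$, Lemma \ref{lemmak=1}$(iii)$ imposes congruence conditions on $a$ modulo powers of $3$, tying $\nu_3(n)$ to $\nu_3(a+j)$ for $j\le 7$. This removes most $a$, and all of them when $\nu_3(n)$ is large. When $2\mid n$, I would use Lemma \ref{generalboundp}, together with a direct check of \eqref{phij>j} at $p=2$, which requires $\nu_2(\Delta(a+1,r_1))-\nu_2\binom{n}{r_1}\ge r_1$. This condition forces $a$ to lie near multiples of high powers of $2$, in a window whose size depends on $\nu_2(n)$.

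\textbf{Sieving by $n+a$.} Next I would use the primes dividing $n+a$. Lemma \ref{w-k1}$(iii)$ is automatic here, since $P(n)<11$. Lemma \ref{n+a- 1<=k<=5}$(i)$ with $k=1$ says every prime $p\ge 37$ dividing $n+a$ must satisfy $p\parallel(a+1)$ and $p\mid(n-1)$. Since $n-1$ is coprime to $6$ and small, this is very restrictive. For $11\le p\le 31$, part $(ii)$ gives $p\mid(a+1)$ or $p^2\mid(a+2)$, and for $p\in\{5,7\}$ it gives $p^j\mid(a+j)$. Applying these conditions by computer should cut the list to a small exceptional set.

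\textbf{Remaining cases.} For each surviving pair I would either exhibit a prime whose Newton polygon has right-most slope $<1$ (then Lemma \ref{fil1} applies) or use Proposition \ref{root}. In the latter case the root must satisfy $\xi=2n+a-1+\frac{(n-1)(n+a-1)}{d_1d_2}\gamma$ with $\operatorname{Rad}(n(n+a)d_1d_2)\mid\xi$ and $0<\xi$ bounded by Lemma \ref{szego}; enumerating the candidates is finite. Small $n$, say $n\le 12$, are checked by factoring directly. $n=3$ is handled by Theorem \ref{n=2,3}$(ii)$, since $a+3\in\{m^3,2m^3,4m^3\}$ gives only a few $a$ to test. $n=2$ is exactly the perfect-square case.

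\textbf{Main obstacle.} The hard part is large $n=2^\alpha 3^\beta$ (up to about $2^9 3^5$) when $\gcd(n-1,n+a-1)>1$. There $\gamma$ may be rational, the root sieve is weaker, and direct factorisation is infeasible. In that case I would first try several primes $p\mid (n-1)(n+a)(n+a-1)$ for a Newton polygon of right-most slope $<1$. Failing that, I would enumerate $d_1\mid n-1$ and $d_2\mid n+a-1$ and test the congruences \eqref{xi} for every admissible $\xi\le$ the Szegő bound, as was done for $(400,624)$.
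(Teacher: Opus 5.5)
Your plan matches the paper's proof: reduce to $n=2^\alpha3^\beta$ via Lemmas \ref{nolinearfactorn>3} and \ref{w-k1}$(ii)$, settle $n=2,3$ with Theorem \ref{n=2,3}, sieve the pairs $(n,a)$ with Lemmas \ref{lemmak=1} and \ref{n+a- 1<=k<=5}, factor the small cases directly (the paper does this for all $n\le 300$), and eliminate the surviving large cases such as $n=512$ and $(n,a)=(576,160)$ by enumerating root candidates from Proposition \ref{root} under the bound of Lemma \ref{szego}, evaluating the polynomial at any candidate that survives (including the $\gamma=0$ value $2n+a-1$). Two small slips, neither of which affects the argument: for a linear factor, Lemma \ref{fil1} only applies to primes dividing $a_{n-1}=n(n+a)$ (otherwise the first edge has slope $0$ and degree $1$ is not excluded), so primes dividing only $(n-1)(n+a-1)$ are of no use there; and $n-1$ need not be coprime to $6$ (e.g.\ $n=4,9,256$).
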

    \begin{proof}
        By Lemma \ref{nolinearfactorn>3}, we have $P(n)\leq 3$ and by Lemma \ref{w-k1}$(ii)$, $\nu_3(n)\leq 5$ and $\nu_2(n)\leq 9$. Let $n=2$. By Theorem \ref{n=2,3}, $\mathcal{L}_2^{(a)}(x)$ has a linear factor if and only if $a\in\{50< m\leq 1000: m+2\text{ is  a perfect square}\}$. Let $n=3$. By Theorem \ref{n=2,3}, $\mathcal L_3^{(a)}(x)$ has a linear factor and $50<a\leq 1000$, then $a\in\{51,105,122,125,247,253,340,429,497,683,726,861\}$ and we checked that $\mathcal{L}_3^{(a)}(x)$ is irreducible for these $a$.
        
        Let $n=2^\alpha3^\beta$, $0\leq \alpha\leq 9$, $0\leq \beta\leq 5$, $n>3$. By Lemma \ref{w-k1}$(ii)$, we have $\nu_2(n)\leq\log_2(a)$ and by Lemma \ref{lemmak=1}, we have a finite set of pairs $(n,a)$.  We apply Lemma \ref{n+a- 1<=k<=5} and are left $366$ with pairs of $(n,a)$.  
        For $n=512$, we have $a\in\{512, 576, 656, 664, 860, 948\}$. Let the integer root of $\mathcal{L}_{512}^{(a)}(x)$ be $\xi$. 
        Note that $\gcd(n+a-1, n-1)=1$ and $a$ is even for all our cases. Thus we have $\gamma\in\mathbb Z\setminus\{0\}$ and $d_1d_2>85$. 
        For each $a$, we consider the possibilities of $d_1$, $d_2$ and $\xi$ in the bound given by Lemma \ref{szego}. The only possibility is $(n,a,\xi)=(512, 512, 2046)$.
        We find the value of $\mathcal{L}_n^{(a)}(\xi)$ and verify that it is not a root. 
        Further, we checked that $\mathcal{L}_n^{(a)}(x)$ are irreducible for $n\leq 300$ and the pairs $(n,a)$ with $n>300$ and $n\neq 512$ are given by
        $(n,a)\in \{ (384, 480), (432, 80), (576, 160), (576, 482), (576, 674),(768, 294),
        (768, 753),(768, 480), (768, 584),\\ (1152, 723), (1536, 512)\}.$
        As in the earlier case, now we look at the roots of $\mathcal{L}_n^{(a)}(x)$. Since all $n$ are even, $\xi$, is even for all pairs $(n,a)$. If $\gamma=0$, then $(n,a)\in\{(768, 753), (1152, 723)\}$ and $\xi=2n+a-1$. But $\mathcal L_n^{(a)}(\xi)\neq 0$. Hence $\gamma\neq 0$ for all pairs. Note that for all $(n,a)$ except $(n,a)=(576, 160)$, $\gcd(n+a-1,n-1)=1$. Consider $(n,a)=(576, 160)$.
        By Lemma \ref{szego}, we have $0<\xi\leq 2616$. As in the earlier cases, we find all the possibilities of $\xi$ in this range such that $\operatorname{Rad} (n(n+a)d_1d_2)\mid\xi$ for different combinations of $d_1, d_2$, but none of them satisfy \eqref{xi}. As, $\gcd(n+a-1,n-1)=1$ for all the other pairs, we have $d_1d_2>\frac{n-1}{6}$. So, we check the possibilities of $\xi$ using Lemma \ref{szego} and Proposition \ref{root} and conclude that no such $\xi$ exists.
        Hence $\mathcal{L}_n^{(a)}(x)$ does not have a linear factor in these cases.
    \end{proof}
    
\textbf{Proof of Theorem \ref{main}}
    By Theorem \ref{n=2,3} and Lemmas \ref{nofactor6}, \ref{nofactor3-5}, \ref{nofactor2}, \ref{nolinearfactorn>3} and \ref{2^alpha3^beta}, it follows that, for $50< a\leq 1000$, $L_n^{(a)}(x)$ is irreducible except when $n=2$ and $a\in\{r^2-2:8\leq r\leq 31\}$. The remaining range is covered by \cite[Theorem 1]{laishram_shorey_integer}, which completes the proof.
\section*{Concluding Remarks:}
\vspace{-.3cm}
These techniques extend naturally to larger values of \(a\), and the bound \(a \le 1000\) is not intended to be sharp. While the methods developed here are expected to play a significant role in establishing irreducibility results over substantially wider ranges of \(a\), carrying out such extensions would require considerably more involved computations and is beyond the scope of the present paper.
\section*{Declarations of interest}
\vspace{-.3cm}
None
\section*{Acknowledgments}
\vspace{-.3cm}
During the preparation of this work, D. T. Das is highly grateful to the Birla Institute of Technology and Science, Pilani for providing the institute fellowship. S. Laishram was supported by ANRF CRG Grant CRG/2023/005564 and S. G. Nair was supported by the SERB MATRICS Grant MTR/2022/000864. The authors gratefully acknowledge ICDEPRA 2024, during which the problem investigated in this paper was conceived, and the collaboration leading to this work began.
\bibliographystyle{elsarticle-harv} 
\bibliography{Bibliography}

@book{polya_szego,
    author="G. P\`{o}lya and G. Szeg\`{o}",
    title="Problems and theorems in analysis. Vol. II. Theory of functions, zeros, polynomials, determinants, number theory, geometry. Die Grundlehren der Mathematischen Wissenschaften",
    publisher="Springer-Verlag, New York",
    year="1976"
}

@book{szego,
  title={Orthogonal Polynomials},
  author={Szeg{\H{o}}, G.},
  isbn={9780821810231},
  lccn={77476087},
  series={American Math. Soc: Colloquium publ},
  year={1975},
  publisher= {American Mathematical Society}
}

@book{Hasse,
  author = {H. Hasse},
  title = {Number Theory},
  series = {Classics in Mathematics},
  publisher = {Springer-Verlag},
  address = {Berlin},
  year = {2002}
}

@article{filaseta_finch_leidy,
    author =       "M. Filaseta and C. Finch and J. R. Leidy",
    title =        "{T. N. Shorey’s influence in the theory of irreducible polynomials}",
    journal =      "Diophantine Equations",
    publisher = "Narosa Publ., New Delhi",
    pages =        "77--102",
    year =         "2008"
}

@article{filaseta_all_but_finite_bessel,
    author = " M. Filaseta",
    title = "The irreducibility of all but finitely many Bessel polynomials",
    journal = "Acta Math, 174",
    year = "1995",
    pages = " 383--397"
}

@article{filaseta_kidd_trifonov,
  author = {M. Filaseta and T. Kidd and O. Trifonov},
  title = {Laguerre polynomials with Galois group $A_m$ for each $m$},
  journal = {J. Number Theory},
  volume = {132},
  year = {2012},
  pages = {776--805}
}

@article{filaseta_lam,
  author = {M. Filaseta and T. Y. Lam},
  title = {On the irreducibility of the generalized Laguerre polynomials},
  journal = {Acta Arith.},
  volume = {105},
  year = {2002},
  pages = {177--182}
}

@article{schur_1929_1,
    author="I. Schur",
    title=" Einige Sätze über Primzahlen mit Anwendungen auf Irreduzibilitätsfragen I",
    publisher="Sitzungsber. Preuss. Akad. Wiss.",
    journal="Berlin Phys.-Math. Kl.14",
    year="1929", 
    pages="125--136"
}

@article{schur_1931,
    author="I. Schur",
    title=" Affektlose Gleichungen in der Theorie der Laguerreschen und Hermiteschen Polynome",
    journal="Journal für die reine und angewandte Mathematik 165",
    year="1931", 
    pages="52--58"
}

@incollection{schur_1973_1,
  author = {I. Schur},
  title = {Gleichungen ohne Affekt},
  booktitle = {Gesammelte Abhandlungen, Band III},
  publisher = {Springer-Verlag},
  year = {1973},
  pages = {191--197}
}

@article{nair_shorey_GPF,
    author = "S. G. Nair and T. N. Shorey" ,
    title = "Lower bounds for the greatest prime factor of product of consecutive positive integers" ,
    journal = "Journal of Number Theory",
    year = "2015",
    pages   = {307--328}
}

@article{nair_shorey_survey,
  author  = {S. G. Nair and T. N. Shorey},
  title   = {Generalized Laguerre polynomials with applications},
  journal = {The Mathematics Student},
  volume  = {86},
  pages   = {87--101},
  year    = {2017}
}

@article{laishram_shorey_integer,
    author="S. Laishram and T. N. Shorey",
    title="  Extensions of Schur’s irreducibility results",
    journal="Indagationes Mathematicae 21 ",
    year="2011 ", 
    pages="87–105"
}

@article{laishram_shorey_GPF,
    author =" S. Laishram and T. N. Shorey", 
    title = " The greatest prime divisor of a product of consecutive integers",
    journal = "Acta Arith. 120",
    year = "2005",
    pages="299-306"
}

@article{lehmer,
    author = " D. H. Lehmer",
    title ="On a problem of St{\"o}rmer" ,
    journal ="Illinois J. of Math., 8" ,
    year = "1964",
    pages="57-79"
}

@article{dumas,
    author = "M. G. Dumas" ,
    title = "Sur quelques cas d'irréductibilité des polynômes à coefficients rationnels",
    journal = "J. Math. Pures Appl. {\bf 2}",
    year = "1906",
    pages = "191--258"
}

@article {banerjee,
    author = {P. Banerjee},
    title = {An unsolved question surrounding the generalized Laguerre
              polynomial {$L_n^{(n)}(x)$}},
   journal = {Ramanujan J.},
    volume = {65},
    year = {2024},
    number = {3},
    pages = {1147--1158},
}

@article{stormer,
  author  = {C. Störmer},
  title   = {Quelques th{\'e}or{\`e}mes sur l'{\'e}quation de Pell
             $x^{2} - D y^{2} = \pm 1$ et leurs applications},
  journal = {Skrifter Videnskabs-Selskabet i Christiania},
  series  = {Matematisk-Naturvidenskabelig Klasse},
  volume  = {I},
  number  = {2},
  year    = {1897},
  }

@article{lucas,
 author = {E. Lucas},
 journal = {American Journal of Mathematics},
 number = {2},
 pages = {184--196},
 publisher = {Johns Hopkins University Press},
 title = {Théorie des Fonctions Numériques Simplement Périodiques},
 volume = {1},
 year = {1878}
}

@article{najman,
 author = "F. Najman",
 title = "Large strings of consecutive smooth integers",
 fjournal = {Archiv der Mathematik},
 journal = "Arch. Math.",
 year = {2011},
 volume = {97},
 number = {4},
 pages = {319--324},
}

@article{gow,
  author = {R. Gow},
  title = {Some generalized Laguerre polynomials whose Galois groups are the Alternating groups},
  journal = {J. Number Theory},
  volume = {31},
  year = {1989},
  pages = {201--207}
}

\end{document}